\pdfoutput=1
\documentclass[11pt,naturalnames]{article}
\usepackage[T1]{fontenc}

\usepackage[pdftitle={Brownian loops on non-smooth surfaces and the Polyakov-Alvarez formula},
  pdfauthor={Minjae Park, Joshua Pfeffer, and Scott Sheffield},
colorlinks=true,linkcolor=NavyBlue,urlcolor=RoyalBlue,citecolor=PineGreen,bookmarks=true,bookmarksopen=true,bookmarksopenlevel=2,unicode=true,linktocpage]{hyperref}

\usepackage{fourier}

\usepackage[margin=1in]{geometry}
\usepackage[usenames,dvipsnames]{xcolor}

\usepackage{amsmath} 
\usepackage{amssymb}
\usepackage{amsthm}
\usepackage{amsfonts}
\usepackage{graphicx}
\usepackage{enumerate}
\usepackage{bm}
\usepackage{upgreek}

\newcommand{\abs}[1]{\left|#1\right|}

\DeclareMathOperator*{\esssup}{ess\,sup}

\def\Var{\mathrm{Var}}
\def\Cov{\mathrm{Cov}}
\def\area{\mathrm{Vol}}
\def\Vol{\mathrm{Vol}}
\def\diam{\mathrm{diam}}
\def\len{\mathrm{len}}
\def\Len{\mathrm{Len}}

\def\clen{\mathrm{clen}}
\def\bclen{\mathrm{bclen}}

\def\Re{\mathrm{Re}}
\def\Im{\mathrm{Im}}

\def\cen{\mathrm{cen}}

\def\ep{\varepsilon}
\def\mcl{\mathcal}
\def\BB{\mathbb}
\def\cdelta{\beta}
\def\smalldiam{d}
\def\smallDiam{d_*}
\def\smallset{D^{\rho,\ep}}
\def\1{\mathbf{1}}
\def\aconst{{\frac{\chi(M)}{6}}}

\def\Vol{\mathrm{Vol}}
\def\tr{\mathrm{tr}}
\def\Lip{\operatorname{Lip}}
\def\ol{\overline}
\def\wt{\widetilde}
\def\CC{\Lambda}

\newtheorem{theorem}{Theorem}[section]
\newtheorem{prop}[theorem]{Proposition}
\newtheorem{lem}[theorem]{Lemma}

\newtheorem{conj}[theorem]{Conjecture}

\newtheorem{ques}[theorem]{Question}

\newtheorem{defn}[theorem]{Definition}

\theoremstyle{remark}
\newtheorem{remark}[theorem]{Remark}

\numberwithin{equation}{section}

\def\@rst #1 #2other{#1}
\newcommand\MR[1]{\relax\ifhmode\unskip\spacefactor3000 \space\fi
  \MRhref{\expandafter\@rst #1 other}{#1}}
\newcommand{\MRhref}[2]{\href{http://www.ams.org/mathscinet-getitem?mr=#1}{MR#2}}

\def\MR#1{\href{http://www.ams.org/mathscinet-getitem?mr=#1}{MR#1}}

\begin{document}

\setcounter{tocdepth}{2}
\setcounter{page}{1}
\setcounter{section}{0}

\author{
\begin{tabular}{c}Minjae Park\\[-5pt]\small University of Chicago\end{tabular}\;
\begin{tabular}{c}Joshua Pfeffer\\[-5pt]\small Columbia University\end{tabular}\;
\begin{tabular}{c}Scott Sheffield\\[-5pt]\small MIT\end{tabular}}

\title{Brownian loops on non-smooth surfaces\\ and the Polyakov-Alvarez formula}

\date{\today}

\maketitle{}

\begin{abstract}
Let $\rho$ be compactly supported on $D \subset \BB R^2$. Endow $\BB R^2$ with the metric $e^{\rho}(dx_1^2 + dx_2^2)$. As $\delta \to 0$ the set of {\em Brownian loops} centered in $D$ with length at least $\delta$ has measure $$\frac{\area(D)}{2\pi \delta} + \frac{1}{48\pi}(\rho,\rho)_{\nabla}+ o(1).$$
When $\rho$ is smooth, this follows from the classical Polyakov-Alvarez formula. We show that the above also holds if $\rho$ is not smooth, e.g.\ if $\rho$ is only Lipschitz. This fact can alternatively be expressed in terms of heat kernel traces, eigenvalue asymptotics, or zeta regularized determinants. Variants of this statement apply to more general non-smooth manifolds on which one considers all loops (not only those centered in a domain $D$).

We also show that the $o(1)$ error is uniform for any {\em family} of $\rho$ satisfying certain conditions.
This implies that if we {\em weight} a measure $\nu$ on this family by the ($\delta$-truncated) Brownian loop soup partition function, and take the vague $\delta \to 0$ limit, we obtain a measure whose Radon-Nikodym derivative with respect to $\nu$ is $\exp\bigl( \frac{1}{48\pi}(\rho,\rho)_{\nabla}\bigr)$. When the measure is a certain regularized Liouville quantum gravity measure, a companion work~\cite{ang2020brownian} shows that this weighting has the effect of changing the so-called central charge of the surface.

\end{abstract}


\bigskip
\noindent\textbf{Acknowledgments.} We thank Morris Ang, Ewain Gwynne, Camillo De Lellis, Sung-jin Oh, and Peter Sarnak for helpful comments. The authors were partially supported by NSF grants DMS 1712862 and DMS 2153742. J.P. was partially supported by a NSF Postdoctoral Research Fellowship under grant 2002159.

\section{Introduction}
Let us first recall a few standard definitions and observations. On a compact surface with boundary, the heat kernel trace can be written $Z = Z(t) = \textrm{sp}\, e^{t\Delta} = \sum e^{t \lambda_n}$ where $\lambda_n$ are the eigenvalues of the Laplace-Beltrami operator $\Delta$. If $I(s)$ is the number of $-\lambda_n$ less than $s$ then $$\int_0^\infty  e^{-ts} I(s)ds= \int_0^\infty e^{-ts} \sum_{n=0}^\infty  1_{s>-\lambda_n} ds =  \sum_{n=0}^\infty \int_{-\lambda_n}^\infty  e^{-ts}ds = \frac1t \sum e^{t \lambda_n} = Z(t)/t.$$ In other words, $Z/t$ is the Laplace transform of $I$. The asymptotics of $Z$ (as $t \to 0$) are therefore closely  related to the asymptotics of $\lambda_n$ (as $n \to \infty$). Weyl addressed the latter for bounded planar domains $D$ in 1911 \cite{weyl1911asymptotische} (see discussion in \cite{mckean-singer}) by showing $-\lambda_n \sim \frac{2\pi n}{\area(D)}$ as $n \to \infty$ which is equivalent to \begin{equation}\label{eqn::weyllaw} Z \sim \frac{\area(D)}{4 \pi t}
\end{equation} as $t \to 0$. In 1966 Kac gave higher order correction terms for $Z$ on domains with piecewise linear boundaries (accounting for boundary length and corners) in his famously titled ``Can you hear the shape of a drum?''\ which asks what features of the geometry of $D$ can be deduced from $I$ (or equivalently from $Z$) \cite{kac1966can}. McKean and Singer extended these asymptotics from planar domains to smooth manifolds with non-zero curvature \cite{mckean-singer} where the constant order correction term is a certain curvature integral. For two dimensional surfaces, with metric given by $e^{\rho}$ times a flat metric, the integral $\int_\delta^\infty \frac{Z(t)}{t} dt$ turns out to be a natural quantity whose small $\delta$ asymptotics involve a constant order term that corresponds to the Dirichlet energy of $\rho$ (the so-called Polyakov-Alvarez formula, also known as the Polyakov-Ray-Singer or Weyl anomaly formula) \cite{ray-singer, polyakov-qg1, alvarez, sarnak1987determinants,  osgood-phillips-sarnak}. This constant order Dirichlet energy term (which can also be formulated in terms of Brownian loop soups, see below) is the main concern of this paper.

Much of the literature assumes that $\rho$ is smooth and makes regular use of objects like curvature that are not well defined if $\rho$ is not $C^2$. But it is known~\cite{hormander1968spectral} that if $\rho$ is only $C^2$ then Weyl's law still holds, i.e.\ $-\lambda_n \sim \frac{2\pi n}{\area_\rho(D)}$~\cite[Example 4.9]{ambrosio2018short} and Weyl's law has been established in certain less smooth settings as well.\footnote{We remark that there is a general theory of metric measure spaces with the so-called ``Riemannian curvature-dimension'' (RCD) condition, not necessarily confined to conformal changes of flat metrics. They include Ricci limit spaces~\cite{sturm2006geometry, lott2009ricci}, weighted Riemannian manifolds~\cite{grigor2006heat}, Alexandrov spaces~\cite{petrunin2010alexandrov}, and many others. A lower bound on Ricci curvature is a key ingredient of many useful estimates in geometric analysis, so Sturm, Lott and Villani~\cite{sturm2006geometry, lott2009ricci} initiated the study of a class of metric measure spaces with a generalized lower-Ricci-bound condition. This has been an active research topic for the last decade; see~\cite{gigli2018lecture} for an overview. The classical Weyl's law and the short time asymptotics for heat kernels on these non-smooth metric measure spaces still hold~\cite{zhang2017weyl, ambrosio2018short}.
Many aspects of the theory are stable under the pointed measured Gromov-Hausdorff topology; for example eigenvalues, heat kernels, and Green's function converge uniformly~\cite{ding2002heat, zhang2017weyl}, Brownian motions converge weakly~\cite{suzuki2019convergence}, etc. Therefore, Weyl's law holds for any RCD space with a measure that can be reasonably approximated. On the other hand, the short time expansion used to define the functional determinant does not exist in this non-smooth setting, so it is not clear if the zeta regularization procedure is also stable.} The problem is somewhat different when the regularity is below $C^2$, since curvature is no longer well-defined everywhere and the relevant estimates no longer hold pointwise and instead hold in an average sense. The primary purpose of this note is to extend some of the basic results in this subject about the conformal anomaly (the Dirichlet energy of $\rho$) to $\rho$ that are less regular---e.g., only Lipschitz---and to show that the rates of convergence can be made to hold uniformly across certain families of $\rho$ values.

This paper is motivated in part by another work by the authors \cite{ang2020brownian} in which similar results are formulated in terms of the so-called {\em Brownian loop measures} which were introduced in \cite{lawler2004brownian} and are related to heat kernel traces on planar domains in e.g.\ \cite{dubedat-coupling, wang2018equivalent} as well as \cite{ang2020brownian}. The results here are useful in the context of \cite{ang2020brownian} because they strengthen the sense in which one can say that ``decorating'' regularized Liouville quantum gravity surfaces by Brownian loop soups has the effect of changing their central charge. We will formulate the results in this paper solely in terms of Brownian loop measures and their generalizations. (The relationship to heat kernels is explained in \cite{ang2020brownian}.)

In addition to the weaker regularity assumptions and the use of generalized loop measures, there are several smaller differences between our presentation and the classical approach in e.g.\ \cite{mckean-singer}: we work in the conformal gauge throughout and do all our calculations in terms of $\rho$, we index loops by their Euclidean center rather than by a typical point on the loop (which would be more similar to the heat kernel approach), and we establish the Polyakov-Alvarez formula in terms of Dirichlet energy directly rather than first establishing an equivalent curvature integral.

Although we encounter some complexity due to the non-smoothness of $\rho$, we also take advantage of the extra simplicity of the two-dimensional setting, where the manifold is completely determined by a conformal factor.

Finally, we note that there is a great deal of additional work in this area, and we cannot begin to survey it all. For example, reference texts such as \cite{berline2003heat, gilkey2018invariance} explore heat kernel traces in greater generality: dimensions other than $2$, operators other than the Laplacian, etc. Other works extend the behavior known for compact smooth manifolds to specific non-smooth manifolds such as those with conical singularities or boundary corners (which both correspond to logarithmic singularities in $\rho$) \cite{mooers1999heat,  kokotov2013polyhedral, sher2013heat, sher2015conic, aldana2018polyakov, greenblatt2021discrete, kalvin2021polyakov} or to non-compact surfaces \cite{albin2013ricci}. There are also many open problems in this subject, which spans probability, geometry, number theory, mathematical physics, and analysis. We present a few of these questions in Section~\ref{sec::open}. We hope that the techniques and perspectives presented here will facilitate progress on these problems and perhaps also find applications in other contexts where Weyl's formula and the Polyakov-Alvarez term appear.

\subsection{Main result} 

Let $\mcl{L}$ denote the set of zero-centered unit-length loops in $\BB R^2$.
We define the Brownian loop measure in the plane by encoding each loop in the plane as an element of $\BB R^2 \times (0,\infty) \times \mcl L$ and formulating the Brownian loop measure as a measure on this product space.

\begin{defn} \label{defn-loop-space}
We express every loop $L$ in $\BB R^2$ by the triple $(x,t, \ell)$, where 
 \begin{itemize}\item $t = \len(L)$ is the \emph{length} of $L$, where we define the length of a path as the length of its parametrizing interval.
\item $x=\cen(L)$ is the \emph{center} of $L$,  Euclidean center of mass of $L$, which is equal to $t^{-1} \int_0^t L(s)ds$.
\item $\ell$ is the zero-centered unit-length loop $s \mapsto t^{-1/2}(L(ts)-x)$ obtained from $L$ by translating the center to zero and rescaling time by $t^{-1}$ and space by $t^{-1/2}$. 
\end{itemize} 
\end{defn}

\begin{defn}
\label{defn-blm}
We define the \emph{Brownian loop measure}  on $\BB R^2$ as the measure on loops $(x,t,\ell)$ in $\BB R^2$ given by
\begin{equation}\nonumber\label{eqn::mucdef}\frac{1}{2\pi t^2}dx \, dt \, d\ell,\end{equation}
where $dx$ denotes Lebesgue measure on $\BB R^2$, $dt$ is Lebesgue measure on $(0,\infty)$ and $d\ell$ is the probabilistic law of the random loop in $\mcl L$ obtained by first sampling a two-dimensional Brownian bridge on $[0,1]$ and then subtracting its mean.\footnote{Equivalently $d\ell$ on the complex plane is the law of the complex-valued GFF indexed by the unit-length circle---with additive constant chosen to make the mean zero. In particular, $d\ell$ is invariant under rotations of that circle.}
\end{defn}

 The mass of the set of Brownian loops centered in $D$ with size greater than $\delta$ is given by \begin{equation}\label{eqn::areadelta} \int_D \int_{\delta}^{\infty} \frac{1}{2\pi t^2}\, dt \, dx = \frac{\area(D)}{2\pi\delta}. \end{equation}  In particular,~\eqref{eqn::areadelta} implies that no matter how small $\delta$ is, most loops with length $t \in (\delta, \infty)$ have length of order $\delta$: half of them have $t < 2\delta$, ninety-five percent have $t<20\delta$, and so forth. Also, the fact that~\eqref{eqn::areadelta} tends to $0$ as $\delta \to \infty$ informally means that there are very few {\em large} loops centered in $D$.
 
 Our main result describes how this mass of Brownian loops changes when we measure the length of loops with respect to a different metric $e^{\rho} |dz|^2$ on the plane, for  $\rho$ a Lipschitz function supported in $D$.  We begin by defining the length of a Brownian loop in the metric $e^{\rho} |dz|^2$, which we call its \emph{$\rho$-length}. If the loop were a smooth curve, we would compute its $\rho$-length by integrating $e^{\rho/2}$ along the curve.  Since Brownian loops have Hausdorff dimension $2$, we instead define its $\rho$-length by integrating $e^\rho$ along the loop, so that it has the same scale factor as area.

 \begin{defn}
 \label{def-rho-length-vol}
Let $(M,g)$ be a smooth two-dimensional Riemannian manifold, and let $\rho$ be a function on $M$.  We define the $\rho$-length $\len_\rho(L)$ of a loop $L$ as $\int_0^{\len(L)} e^{\rho(L(s))} ds$.  We define the $\rho$-volume form $\Vol_\rho$ as the volume form associated to $(M,e^\rho g)$, and we write $\Vol := \Vol_0$.  
\end{defn}

Except in Theorem~\ref{thm-loop-general-surface} and Section~\ref{sec-loop-general-surface}, we always take $(M,g)$ in Definition~\ref{def-rho-length-vol} to be the Euclidean plane. We first describe the space of functions in the scope of this section.

\begin{theorem} \label{thm::loopconvthm}
Let $D$ be a bounded open subset of $\BB R^2$, and $\Lip(D)$ be the space of real-valued Lipschitz functions that vanish outside of $D$. Suppose that $\mathcal B\subset \Lip(D)$ is a collection of functions that (1) has uniformly bounded Lipschitz constants, and (2) is precompact in $W^{1,1}(D)$.

Then as $\delta \to 0$, the $\mu^{\text{loop}}$-mass of loops centered in $D$  with $\rho$-length at least $\delta$, with respect to the Brownian loop measure,  is given by
\begin{equation}\label{eqn::mainthm} \frac{ \area_\rho(D)}{2\pi\delta} + \frac{1}{48\pi} (\rho, \rho)_\nabla + o(1)\end{equation} with the convergence uniform over $\rho \in \mathcal B$.
\end{theorem}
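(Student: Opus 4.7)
The plan is to reduce the Lipschitz case to the (classically known) smooth case by mollification, proving the smooth case first with a quantitative error rate so that the comparison argument closes, and using the $W^{1,1}$-precompactness of $\mathcal B$ to make all estimates uniform.

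\textbf{Smooth case.} For $\rho \in C^\infty_c(D)$, I would write a loop $L$ with center $x$, length $t$, and shape $\ell \in \mcl L$ as $L(s) = x + \sqrt t\,\ell(s/t)$, so that $\len_\rho(L) = t\int_0^1 e^{\rho(x+\sqrt t\,\ell(u))}\,du$. Taylor-expanding $\rho$ around $x$, the $O(\sqrt t)$ term drops out after $d\ell$-averaging because $\int_0^1 \ell(u)\,du = 0$; the $O(t)$ term involves $\nabla^2\rho(x)$ and $\nabla\rho(x)\otimes\nabla\rho(x)$ contracted against the universal matrix $\mathbb E_\ell[\int_0^1 \ell_i(u)\ell_j(u)\,du]$ that comes from the Brownian-bridge covariance. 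Inserting this expansion into the condition $\len_\rho(L) \ge \delta$ and integrating $dx\,dt/(2\pi t^2)$ produces the leading term $\area_\rho(D)/(2\pi\delta)$ from $e^{\rho(x)}$ alone, and, after an integration by parts in $x$ that converts the Laplacian piece to a gradient-squared integral (using $\rho \equiv 0$ outside $D$), the subleading term $(\rho,\rho)_\nabla/(48\pi)$. Crucially, I would track the remainder explicitly in terms of some $\|\rho\|_{C^k}$.

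\textbf{Mollification and uniformity over $\mathcal B$.} For Lipschitz $\rho \in \mathcal B$, set $\rho_\epsilon := \rho * \phi_\epsilon$; from the uniform Lipschitz bound one has $\|\rho - \rho_\epsilon\|_\infty \lesssim \epsilon$ and $\|\rho_\epsilon\|_{C^k} \lesssim \epsilon^{1-k}$. The $W^{1,1}$-precompactness of $\mathcal B$, via a covering-by-finitely-many-$W^{1,1}$-balls argument using that convolution is a $W^{1,1}$-contraction, upgrades the pointwise convergence $\rho_\epsilon \to \rho$ in $W^{1,1}$ to a uniform statement over $\rho \in \mathcal B$. Combined with the uniform $L^\infty$ bound on $\nabla(\rho-\rho_\epsilon)$ via the interpolation $\|\nabla(\rho-\rho_\epsilon)\|_2^2 \le \|\nabla(\rho-\rho_\epsilon)\|_\infty \|\nabla(\rho-\rho_\epsilon)\|_1$, this promotes to uniform $W^{1,2}$-convergence, hence uniform convergence $(\rho_\epsilon,\rho_\epsilon)_\nabla \to (\rho,\rho)_\nabla$ and $\area_{\rho_\epsilon}(D) \to \area_\rho(D)$.

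\textbf{Comparison and $\epsilon$-$\delta$ balance.} On any loop of length $t$, $|\len_\rho(L) - \len_{\rho_\epsilon}(L)| \le Ct\epsilon$, so the symmetric difference of the events $\{\len_\rho(L) \ge \delta\}$ and $\{\len_{\rho_\epsilon}(L) \ge \delta\}$ is confined to a narrow $t$-band of width $O(t\epsilon)$ around the threshold; estimating the Brownian-loop mass of this band gives a bound on $|N^\delta(\rho) - N^\delta(\rho_\epsilon)|$, and the same argument controls the area-term difference. The plan is then to take $\epsilon = \epsilon(\delta) \to 0$ so that these comparison errors vanish while the smooth-case remainder for $\rho_\epsilon$ (which worsens as the $C^k$-norm blows up like $\epsilon^{1-k}$) also vanishes. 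Because all constants depend only on the common Lipschitz bound and the uniform $W^{1,1}$-modulus from precompactness, the resulting $o(1)$ is uniform over $\mathcal B$.

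\textbf{Main obstacle.} The principal difficulty is the two-sided $\epsilon$-$\delta$ tension: the mollification error pushes $\epsilon$ to be small, while the smooth-case remainder (involving negative powers of $\epsilon$) pushes the other way, so the balance only closes if the smooth-case asymptotic comes with a concrete rate in terms of a moderate norm of $\rho$, rather than being invoked as a black-box $o(1)$. A likely key refinement is to exploit cancellation in the comparison integral $\int_0^t (\rho - \rho_\epsilon)(L(s))\,ds$: since the loop's occupation measure averages over a ball of radius $\sqrt t \gg \epsilon$, the naive pointwise bound $\|\rho - \rho_\epsilon\|_\infty \lesssim \epsilon$ should be replaceable by something sharper in $\epsilon/\sqrt t$, which is what makes the $\epsilon(\delta)$-balance feasible at all.
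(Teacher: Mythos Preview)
Your strategy differs substantially from the paper's, which never reduces to the smooth case. The paper works directly with Lipschitz $\rho$: it introduces the ``center $\rho$-length'' $\clen_\rho(L) := e^{\rho(\cen L)}\len(L)$, for which $\mu\{\clen_\rho \ge \delta\} = \area_\rho(D)/\delta$ holds exactly, and then shows that passing from $\clen_\rho$ to $\len_\rho$ shifts the mass by $\frac{b}{2}(\rho,\rho)_\nabla + o(1)$. This is done by (i) establishing from the uniform Lipschitz bound a unique threshold $\alpha(x,\ell,\delta)$ where $\len_\rho((x,t,\ell))$ crosses $\delta$ and relating $\alpha^{-1}-\beta^{-1}$ (with $\beta = e^{-\rho(x)}\delta$) to the discrepancy $\len_\rho(x,\beta,\ell)-\delta$; and (ii) showing via a Fourier argument, driven by the Fr\'echet--Kolmogorov uniform equicontinuity of $\nabla e^\rho$ coming from $W^{1,1}$-precompactness, that the integrated discrepancy yields the Dirichlet energy.

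The gap in your proposal is exactly the one you flag, and it does not close with the estimates you record. Your crude comparison gives $|N^\delta(\rho)-N^\delta(\rho_\epsilon)|$ and $|\area_\rho-\area_{\rho_\epsilon}|/\delta$ each of order $\epsilon/\delta$, forcing $\epsilon = o(\delta)$. On the other side, the smooth-case remainder after extracting the Dirichlet term is at best $O(\delta^{1/2}\|\rho_\epsilon\|_{C^3}) = O(\delta^{1/2}\epsilon^{-2})$, forcing $\epsilon \gg \delta^{1/4}$; expanding to any higher order only improves this to $\epsilon \gg \delta^{1/2}$, since the $k$-th order remainder scales like $\delta^{(k-2)/2}\|\rho_\epsilon\|_{C^k} \asymp \delta^{(k-2)/2}\epsilon^{1-k}$. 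These constraints are incompatible. Your proposed refinement---that the occupation measure at scale $\sqrt t \gg \epsilon$ further smooths $\rho-\rho_\epsilon$---gives no extra gain for merely Lipschitz data: convolution with the radially symmetric occupation density does kill the linear term, but for Lipschitz $\psi = e^\rho - e^{\rho_\epsilon}$ the second-order remainder is controlled pointwise only by $\min(\|\psi\|_\infty,\|\nabla\psi\|_\infty\sqrt\beta)$, which after dividing by $\delta$ yields nothing better than $\epsilon/\delta$. To make the comparison term genuinely $o(1)$ after subtracting the area terms one must control $\delta^{-1}\int_D\bigl[(\psi*\theta_{\beta(x)})(x)-\psi(x)\bigr]\,dx$ using the $x$-dependence of $\beta$ and the $W^{1,1}$-equicontinuity of gradients, and that is precisely the analytic content of the paper's direct argument. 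The mollification does not bypass the hard step; it relocates it into the comparison, where you have not carried it out.
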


\begin{remark}[Uniform boundedness]
    \label{rmk-boundedness}
The conditions (1) and (2) imply that the functions in $\mathcal B$ are uniformly bounded.
\end{remark}

\begin{remark}[General $p$]
    \label{rmk-general-p}
Since we require for now that the Lipschitz constant is uniformly bounded and the domain is bounded, precompactness in $W^{1,1}$ is equivalent to precompactness in $W^{1,p}$ for any fixed $p \in (1, \infty)$. In particular, Theorem~\ref{thm::loopconvthm} could have been formulated using precompactness in $W^{1,2}$ instead of $W^{1,1}$.  Let us also remark that the space $W^{1,2}(D)$ is equivalent to the space $H^1(D)$ of $\rho$ for which $(\rho, \rho)_\nabla$ is finite.
\end{remark}

\begin{remark}[Precompactness and uniform equicontinuity]
    \label{rmk-uniform-equicontinuity}
    Recall the Fr\'echet-Kolmogorov theorem (e.g., see~\cite{brezis2011functional}):
    let $D \subset \mathbb R^n$ be a bounded domain, and $1\le p<\infty$. A subset $\mathcal A\subset L^p(D)$ is precompact if and only if $\mathcal A$ is bounded in $L^p(D)$ and
    \begin{equation}\label{eqn-fk-thm}
         \sup_{u\in \mathcal A}\,\int_D \abs{u(x+h)-u(x)}^p \,dx\to 0 \quad\text{as } h\to 0,
    \end{equation}
    where $u$ is extended to the function on $\mathbb R^n$ whose value outside $D$ is zero.
    
    We will use this equivalent characterization of precompactness in some of our proofs, usually referred as the \emph{uniform equicontinuity} condition in $L^p$. In particular, we will apply this, in the case $p=1$, to the set $\mathcal A$ of the gradients of the functions in the set $\mathcal B$ from the statement of Theorem~\ref{thm::loopconvthm}.
\end{remark}

\begin{remark}[The uniform equicontinuity condition is necessary]
    As mentioned above, the precompactness hypothesis is equivalent to a type of uniform equicontinuity hypothesis. This hypothesis---or some similar condition on the functions $\rho \in \mcl B$---is necessary for the conclusion of Theorem~\ref{thm::loopconvthm} (or Theorem~\ref{thm::loopconvthm2}) to hold. Simply requiring all surfaces in $\mathcal B$ to be $C^1$ with a universal bound on $|\nabla \rho|$ would not suffice. For example, in the Theorem~\ref{thm::loopconvthm} setting, $\mathcal B$ could contain a sequence $\rho_1, \rho_2, \ldots$ of $C^1$ functions that converge uniformly to zero with $(\rho_j, \rho_j)_{\nabla}=1$ for each $j$. We can construct such a sequence of functions $\rho_j$ by arranging for $\nabla \rho_j$ to oscillate between fixed opposite values, with the oscillation rate becoming faster as $j \to \infty$.  (A simple example of such a family of functions on the torus $[0,2\pi)^2$ is given by a constant multiple of $\rho_j\Bigl((a,b)\Bigr)) = j^{-1} \sin(ja)$; we can define $\rho_j$ similarly on the planar domain $D$ by tapering the sine functions to zero near the boundary of $D$.) We can also perturb the functions to arrange that $\area_{\rho_j}(D) = \area_\rho(D)$ for all $j$.  This set of functions $\mcl B$ does not satisfy~\eqref{eqn::mainthm}: for any fixed $\delta$, one can easily show that
    \begin{equation} \label{eqn::firstlimit} \lim_{j \to \infty} \mu \{ L: \cen(L) \in D, \, \len_{\rho_j}(L) \geq \delta \}  - \frac{\area(D)}{\delta} =0, \end{equation}
    even though for each fixed $j$ we have
    \begin{equation} \label{eqn::secondlimit}  \lim_{\delta \to 0} \bigl(\mu \{ L: \cen(L)\in D, \, \len_{\rho_j}(L) \geq \delta \} - \frac{\area(D)}{\delta}\bigr) = b/2. \end{equation}
    If the limit in~\eqref{eqn::secondlimit} were uniform in $j$, we could choose a $\delta$ with $\mu \{ L: \cen(L) \in  D, \, \len_{\rho_j}(L) \geq \delta \} > b/4$ for all $j$, and
    \eqref{eqn::firstlimit} would not hold for that $\delta$.\footnote{\label{footnote:w12}One might wonder whether it is enough have $\rho$ in the Hilbert space defined by the inner product $(\rho,\rho)_\nabla$, i.e., the Sobolev space $W^{1,2}(D)=H^1(D)$, with (say) zero boundary conditions. Such a $\rho$ can be nowhere differentiable \cite{serrin1961differentiability}, so one would also need to modify the condition on $\mathcal B$. See Question~\ref{ques-general-ftn}.}
    \end{remark}

We extend this result to general surfaces.  The statement of the theorem involves the notion of the zeta-regularized determinant of the Laplacian, as defined, e.g., in~\cite{alvarez, sarnak1987determinants}.  (However, it is not necessary to understand the definition of the zeta-regularized determinant to follow the proof of Theorem~\ref{thm-loop-general-surface}.)

\begin{theorem}
\label{thm-loop-general-surface}
Let $(M,g)$ be a fixed compact smooth two-dimensional Riemannian manifold, and we let $\mu^{\text{loop}}$ denote the Brownian loop measure on $(M,g)$. Let $K$ be the Gaussian curvature on $M$, let $\Delta$ be the Laplacian associated to $(M,g)$, and let $\det_\zeta' \Delta$ denote its zeta-regularized determinant. Let $\mcl B$ be a family of Lipschitz functions that (1) has uniformly bounded Lipshitz constants, and (2) is precompact in $W^{1,1}(M)$.

Then the $\mu^{\text{loop}}$-mass of loops with $\rho$-length between $\delta$ and $C$ is given by
\begin{align}
\nonumber
&\frac{\Vol_{\rho}(M)}{2\pi\delta}  
- \aconst (\log \frac{\delta}{2} + \upgamma) + \log{C} 
+ \upgamma +  \frac{1}{48\pi} \int_M (\| \nabla \rho \|^2 + 2 K \rho)\Vol(dz) \\ 
& \qquad
 + \log \Vol(M) - \log \Vol_\rho(M) - \log \det\nolimits_{\zeta}' \Delta + o_\delta(1) + o_C(C^{(-1+\ep)/2}), \label{eqn-p-a}
\end{align}
with the convergence as $\delta\to 0$ and $C\to \infty$ uniform over $\rho\in \mathcal B$, where $\upgamma \approx 0.5772$ is the Euler-Mascheroni constant.
\end{theorem}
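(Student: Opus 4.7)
The plan is to establish~\eqref{eqn-p-a} in three stages: first in the base case $\rho \equiv 0$, then for smooth $\rho$ via the classical Polyakov--Alvarez formula, and finally for general Lipschitz $\rho \in \mathcal B$ by an approximation argument that uses Theorem~\ref{thm::loopconvthm} in local isothermal charts to transfer the uniformity.

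For the base case $\rho \equiv 0$, one writes the Brownian loop mass on $(M,g)$ of loops of length in $(\delta, C)$ as $\int_\delta^C Z_g(t)/t\,dt$, where $Z_g(t) = 1 + \sum_{n \geq 1} e^{-t\mu_n/2}$ is the (Brownian-motion) heat kernel trace, with the separated $\log(C/\delta)$ piece coming from the zero eigenvalue. Plugging in the McKean--Singer short-time expansion $Z_g(t) = \frac{\Vol(M)}{2\pi t} + \aconst + O(t)$ yields the $\frac{\Vol(M)}{2\pi \delta}$ leading term together with $-\aconst(\log(\delta/2) + \upgamma)$ and $\log C$ subleading terms, and the constant $-\log\det'_\zeta \Delta + \log \Vol(M) + \upgamma$ is identified via the Mellin representation $\Gamma(s)\zeta_g(s) = \int_0^\infty t^{s-1}(Z_g(t)-1)\,dt$, split at $t=1$, using $\zeta_g'(0) = -\log\det'_\zeta\Delta_g$ and $\Gamma(s) = s^{-1} - \upgamma + O(s)$. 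The $o_C(C^{(-1+\ep)/2})$ error comes from a standard tail bound using the spectral gap $\mu_1 > 0$. For smooth $\rho$, applying this base-case analysis to the smooth metric $e^\rho g$ (replacing $g$ throughout) and then substituting via the classical Polyakov--Alvarez relation
\begin{equation*}
-\log\det\nolimits'_\zeta\Delta_{e^\rho g} = -\log\det\nolimits'_\zeta\Delta_g + \frac{1}{48\pi}\int_M\bigl(\|\nabla\rho\|^2 + 2K\rho\bigr)\Vol(dz) + \log\Vol(M) - \log\Vol_\rho(M)
\end{equation*}
yields~\eqref{eqn-p-a} in the smooth case.

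To extend to general $\rho \in \mathcal B$, I would approximate each $\rho$ by a mollification $\rho_\ep$ converging to $\rho$ in $W^{1,1}$ (uniformly over $\mathcal B$ thanks to the uniform Lipschitz and precompactness hypotheses), apply the smooth case to $\rho_\ep$, and show that both sides of~\eqref{eqn-p-a} differ from the $\rho_\ep$-version by $o_\ep(1)$ uniformly. Continuity of the right-hand side in $W^{1,1}$ is immediate since $\Vol_\rho$, $\int \|\nabla\rho\|^2\Vol$, and $\int K\rho\,\Vol$ are all $W^{1,1}$-continuous. For the Brownian loop mass itself, I would use a finite partition of unity $\{\phi_i\}$ subordinate to isothermal charts $(U_i, g|_{U_i} = e^{\sigma_i}|dz|^2)$ of $M$. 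By Brownian loop diameter concentration (a loop of length $t$ has Euclidean diameter $O(\sqrt{t \log(1/t)})$ outside an exponentially small event), the contribution of loops straddling multiple charts is $o(1)$ as $\delta \to 0$, so up to negligible error the comparison between $(M, e^{\rho + 0}g)$ and $(M, e^{\rho_\ep + 0} g)$ reduces, chart by chart, to a planar comparison on $(U_i, e^{\sigma_i + \rho}|dz|^2)$ versus $(U_i, e^{\sigma_i + \rho_\ep}|dz|^2)$. This planar comparison is governed by Theorem~\ref{thm::loopconvthm} applied to the family $\{\sigma_i + \rho, \sigma_i + \rho_\ep : \rho \in \mathcal B\}$ (after smoothly cutting off each $\sigma_i$ outside $U_i$ to meet the compact-support hypothesis), which inherits uniform Lipschitz bounds and $W^{1,1}$-precompactness from $\mathcal B$; its uniform $o(1)$ then propagates through the partition-of-unity sum.

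The main obstacle is making the localization step genuinely uniform. The conformal factors $\sigma_i$ are not compactly supported within their charts, so one must carefully modify them by a smooth cutoff outside a slightly smaller set $U_i' \subset U_i$ and argue that the resulting discrepancy affects only loops with diameter comparable to $\operatorname{dist}(U_i', \partial U_i)$, whose total mass is $o(1)$ as $\delta \to 0$ uniformly. Closely tied to this is verifying that the $o_\delta(1)$ error in Theorem~\ref{thm::loopconvthm} is uniform over the enlarged family $\{\sigma_i + \rho\}$---which follows from Remark~\ref{rmk-uniform-equicontinuity} since adding a fixed smooth function preserves uniform equicontinuity of the gradients---and that the $o_C(C^{(-1+\ep)/2})$ error is uniform over $\rho \in \mathcal B$, which reduces to a uniform spectral-gap lower bound for $\Delta_{e^\rho g}$, obtainable from the uniform Lipschitz and boundedness of $\rho$ via a standard Cheeger-type inequality.
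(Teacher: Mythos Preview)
Your localization-to-isothermal-charts idea is right and is essentially the paper's Proposition~\ref{prop-small-loops}, but the smooth intermediate step is unnecessary and introduces a uniformity problem you do not resolve. The paper compares general Lipschitz $\rho$ \emph{directly} to $\rho \equiv 0$ (whose case is cited from the companion paper~\cite{ang2020brownian}): since Theorem~\ref{thm::loopconvthm} already handles Lipschitz functions, the chart-by-chart comparison works just as well with $\rho_2 = 0$ as with a mollified $\rho_\ep$, and the classical Polyakov--Alvarez formula is never invoked. By contrast, your plan requires the smooth-case $o_\delta(1)$ error for the metric $e^{\rho_\ep} g$, which comes from the $O(t)$ remainder in the McKean--Singer expansion; that remainder involves the curvature of $e^{\rho_\ep} g$ and hence second derivatives of $\rho_\ep$, which are \emph{not} uniformly bounded as $\ep \to 0$ for merely Lipschitz $\rho$. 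You would therefore need to interleave the $\ep \to 0$ and $\delta \to 0$ limits carefully, and your sketch does not do this.

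The more serious gap concerns the upper cutoff $C$. Theorem~\ref{thm::loopconvthm} has no upper cutoff, so your localization only controls the comparison of the sets $\{\len_\rho L \geq \delta\}$ and $\{\len_{\rho_\ep} L \geq \delta\}$. The comparison of $\{\len_\rho L \leq C\}$ with $\{\len_{\rho_\ep} L \leq C\}$ (or with $\{\len L \leq C\}$) involves loops far too large to sit in a single chart, and this is precisely where the $-\log \Vol_\rho(M)$ term in~\eqref{eqn-p-a} comes from---it is not produced by the Dirichlet-energy contribution of Theorem~\ref{thm::loopconvthm}. The paper handles this via Proposition~\ref{prop-large-loops}: a Markov-chain central limit theorem for Brownian motion on $(M,g)$ shows that a loop of length $t$ has $\rho$-length equal to $t \cdot \Vol_\rho(M)/\Vol(M)$ up to fluctuation $O(t^{(1+\ep)/2})$ with probability $1 - O(e^{-t^\ep})$, uniformly over $\rho \in \mathcal B$. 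This converts $\{\len_\rho L \leq C\}$ into $\{\len L \leq C\,\Vol(M)/\Vol_\rho(M)\}$ with symmetric-difference mass $O(C^{(-1+\ep)/2})$, and the rescaling of the upper cutoff is what generates the $\log\Vol(M) - \log\Vol_\rho(M)$ term when one then applies the $\rho=0$ formula. Your spectral-gap remark addresses only the tail of the heat trace for a fixed metric, not this conversion.
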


For simplicity, we have addressed just the compact manifold case, but one could prove a similar result for manifolds with boundary, see Question~\ref{ques-boundary} where we give a heuristic justification in the preceding paragraph; the resulting expression would include a boundary term which is of order $\delta^{-1/2}$.

Observe that, for smooth $\rho$, the expression in the second line~\eqref{eqn-p-a} of the above expression is equal to  $-\log \det_\zeta' \Delta_\rho$, where $\Delta_\rho$ is the Laplacian associated to $(M,e^\rho g)$.  The expression~\eqref{eqn-p-a} for $-\log \det_\zeta' \Delta_\rho$ is known as the Polyakov-Alvarez formula; see, e.g.,~\cite[Proposition~6.9]{ang2020brownian}. Thus, for smooth $\rho$, Theorem~\ref{thm-loop-general-surface} reduces to a relation between the Brownian loop measure and the zeta-regularized Laplacian determinant, which was shown in \cite[Theorem~1.3]{ang2020brownian}.

In fact, we prove a slight generalization of Theorem~\ref{thm::loopconvthm} in which we consider a more general class of loop measures.  

\begin{defn}
The \emph{expected occupation measure} of a random variable $Z:[0,T] \rightarrow \BB{R}^2$ is the function $\theta:\BB{R}^2 \rightarrow (0,\infty)$ such that, for each measurable set $A \subset \BB{R}^2$, the set $\{t \in [0,T]: Z(t) \in T\}$ has expected Lebesgue measure $\int_A \theta(x) dx$.
\end{defn}

\begin{defn}
\label{defn-blm-gen}
We define a \emph{generalized loop measure} $\mu$ as a measure on loops $(x,t,\ell)$ in $\BB R^2$ given by
\begin{equation}\nonumber\label{eqn::genmucdef}\frac{1}{t^2}dx \, dt \, d\ell,\end{equation}
where $dx$ denotes Lebesgue measure on $\BB R^2$, $dt$ is Lebesgue measure on $(0,\infty)$ and $d\ell$ is an {\em arbitrary} rotationally invariant measure on loops in $\mcl L$ whose expected occupation measure is a Schwartz distribution (but not necessarily Gaussian as for the Brownian loop measure). We denote by $b$ the second central moment of the first---or equivalently, second---coordinate of a  random variable whose density is this expected occupation measure.
\end{defn}

Definition~\ref{defn-blm-gen} is the same as Definition~\ref{defn-blm}, except that we no longer insist that $d\ell$ be the Brownian bridge (and we have removed the $2\pi$ factor as it is less natural for general $d\ell$).
The measure $d\ell$ can be supported on the space of circular loops, square-shaped loops, or outer boundaries of Brownian loops, etc. The $\mu$ from Definition~\ref{defn-blm-gen} need not have the same conformal symmetries as the Brownian loop measure. Even if $d\ell$ is supported on smooth loops (rather than Brownian loops) we parameterize the space of loops as in Definition~\ref{defn-loop-space}, so that $(0,t,\ell)$ represents the loop that traces $\sqrt{t} \ell$ over time duration $t$. In the special case of the Brownian loop measure, $b=1/12$.  (See Proposition~\ref{prop-density}.) The Schwartz distribution assumption in Definition~\ref{defn-blm-gen} does not seem necessary for Theorem~\ref{thm::loopconvthm2} to hold, but we have included it to simplify the calculations in the proof of Proposition~\ref{prop-mean-rho} below.

\begin{theorem} \label{thm::loopconvthm2} Let $D$ and $\mcl B$ be as in Theorem~\ref{thm::loopconvthm}, and let $\mu$ be a generalized loop measure in the sense of Definition~\ref{defn-blm-gen}.  Then as $\delta \rightarrow 0$, the $\mu$-mass of loops centered in $D$ with $\rho$-length at least $\delta$ is given by
\begin{equation}\label{eqn::mainthm2} 
\frac{ \area_\rho(D)}{\delta} + \frac{b}{2} (\rho, \rho)_\nabla + o(1),\end{equation}
with the convergence uniform over $\rho \in \mathcal B$.
\end{theorem}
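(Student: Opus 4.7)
The plan is to parameterize each loop as $(x, t, \ell)$ so that $\len_\rho(L) = t\Phi(x, t, \ell)$ with $\Phi(x, t, \ell) := \int_0^1 e^{\rho(x + \sqrt{t}\,\ell(u))}\,du$. For each fixed $(x, \ell)$, the Lipschitz bound on $\rho$ makes $\Phi$ Lipschitz in $\sqrt t$, and together with $\Phi \to e^{\rho(x)}$ as $t\to 0$ and $\Phi\to 1$ as $t\to\infty$, the set $\{t : t\Phi(x,t,\ell)\geq\delta\}$ equals $[T(x,\ell,\delta), \infty)$ up to a negligible set, where $T$ is the smallest positive root of $T\Phi(x, T, \ell) = \delta$. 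Consequently $\int dt/t^2 = 1/T = \Phi(x, T, \ell)/\delta$, and the $\mu$-mass equals $\frac{1}{\delta}\int_D\int_{\mcl L}\Phi(x, T, \ell)\,d\ell\,dx$ up to negligible error. Iterating the fixed point gives $T = T_0(x)\bigl(1 + O(\sqrt\delta)\bigr)$ with $T_0(x) := \delta e^{-\rho(x)}$, and after controlling the error from substituting $T$ by $T_0$, the problem reduces to computing $\frac{1}{\delta}\int_D\int[\Phi(x, T_0, \ell) - e^{\rho(x)}]\,d\ell\,dx + o(1)$.

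The main computation is a second-order Taylor expansion in $h = \sqrt{T_0}\,\ell(u)$:
\[
e^{\rho(x+h)} - e^{\rho(x)} = e^{\rho(x)}\nabla\rho\cdot h + \tfrac{1}{2}e^{\rho(x)}\bigl[(\nabla\rho\cdot h)^2 + h^T\nabla^2\rho\,h\bigr] + O(|h|^3).
\]
The linear term vanishes in the $d\ell$-average because $\ell$ is zero-centered, i.e., $\int_0^1 \ell(u)\,du = 0$, and equivalently $\mathbb E\,\ell(u) = 0$ by rotational invariance of $d\ell$. The quadratic terms average to $\tfrac{T_0 b}{2}\,e^{\rho(x)}\bigl[\|\nabla\rho\|^2 + \Delta\rho\bigr]$ via the second-moment identity $\mathbb E[\ell_i(u)\ell_j(u)] = b\,\delta_{ij}$. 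The crucial algebraic observation is that $T_0(x)\,e^{\rho(x)} = \delta$ is \emph{constant in $x$}, so the $e^\rho$ factor cancels against the $e^{-\rho}$ in $T_0$, and integration over $D$ yields
\[
\frac{\delta b}{2}\int_D \bigl[\|\nabla\rho\|^2 + \Delta\rho\bigr]\,dx = \frac{\delta b}{2}(\rho,\rho)_\nabla,
\]
using $\int_D \Delta\rho\,dx = 0$, which holds distributionally since $\rho$ is Lipschitz with compact support in $\overline D$ (so $\Delta\rho$ pairs to zero against the constant $1$). Dividing by $\delta$ produces the desired $\tfrac{b}{2}(\rho,\rho)_\nabla$ correction.

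The main obstacle is making all of this rigorous for merely Lipschitz $\rho$, with the $o(1)$ error uniform over $\rho\in\mcl B$. The formal $\nabla^2\rho$ terms in the Taylor expansion are not defined pointwise, but they can be handled either by mollifying $\rho$ to $\rho_\eta = \rho * \psi_\eta$, applying the result to smooth $\rho_\eta$, and passing $\eta\to 0$ with an appropriate coupling to $\delta$, or by treating the relevant integrated quantities distributionally. More delicate are (a) bounding the $O(|h|^3)$ Taylor remainder and (b) controlling the substitution error from $T\to T_0$, both of which are naively of the same order as the correction we seek and so require exploiting cancellations---from the integration by parts $\int_D e^{\rho/2}\nabla\rho\,dx = 2\int_D\nabla(e^{\rho/2} - 1)\,dx = 0$ (since $e^{\rho/2} - 1$ vanishes on $\partial D$), from the $d\ell$-average, and from the $x$-integration over $D$. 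The key tool for uniformity over $\mcl B$ is the Fr\'echet-Kolmogorov equicontinuity of $\{\nabla\rho : \rho\in\mcl B\}$ in $L^1$ recalled in Remark~\ref{rmk-uniform-equicontinuity}, which ensures $\|\nabla\rho(\cdot + v) - \nabla\rho\|_{L^1(D)}\to 0$ as $|v|\to 0$ uniformly in $\rho\in\mcl B$; this is exactly the estimate needed to bound remainder integrals of the form $\int_D |\rho(x+h) - \rho(x) - \nabla\rho(x)\cdot h|\,dx$ uniformly over the family.
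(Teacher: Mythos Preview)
Your overall architecture matches the paper's almost exactly: define a threshold $T$ (the paper's $\alpha$) so that $\len_\rho \geq \delta$ iff $t\geq T$, replace $T$ by the explicit $T_0 = \delta e^{-\rho(x)}$ (the paper's $\beta$), reduce to $\delta^{-1}\int_D\int_{\mcl L}[\Phi(x,T_0,\ell)-e^{\rho(x)}]\,d\ell\,dx$, and Taylor expand. This is precisely the combination of Proposition~\ref{prop-clenarea}, Lemma~\ref{lem::convolveestimate} and Lemma~\ref{lem::convolveestimateconverges}. Your observation that the first-order term $e^{\rho(x)}\nabla\rho(x)\cdot h$ with $h=\sqrt{T_0}\,\ell(u)$ vanishes deterministically because $\int_0^1\ell(u)\,du=0$ is correct, and the second-moment computation for $(\nabla\rho\cdot h)^2$ is exactly what the paper does at the end of Section~\ref{sec::secondlemma}.

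The genuine gap is the ``$h^T\nabla^2\rho\,h$'' contribution, which you correctly flag as formal for Lipschitz $\rho$. The actual object is
\[
\frac{1}{\delta}\int_D e^{\rho(x)}\,\mathbb E_{\ell,u}\bigl[\rho(x+h)-\rho(x)-\nabla\rho(x)\cdot h\bigr]\,dx,
\qquad h=\sqrt{T_0(x)}\,\ell(u),
\]
and you need this to be $o(1)$ uniformly over $\mcl B$. Writing the bracket as $\int_0^1[\nabla\rho(x+sh)-\nabla\rho(x)]\cdot h\,ds$ and using only the Fr\'echet--Kolmogorov bound $\|\nabla\rho(\cdot+v)-\nabla\rho\|_{L^1}=\omega(|v|)$, you obtain a quantity of order $\omega(\sqrt\delta)/\sqrt\delta$, which need not tend to zero. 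Your ``$\int_D\Delta\rho=0$ distributionally'' is correct as a statement about distributions, but it does not by itself justify passing from the finite-$\delta$ expression to the distributional pairing, because the would-be convergence $\mathbb E[R_1]\to \tfrac{bT_0}{2}\Delta\rho$ fails in any useful pointwise or $L^1$ sense for merely Lipschitz $\rho$. Your mollification proposal also runs into a scaling obstruction: the third-order remainder for $\rho_\eta$ carries $\|\nabla^2\rho_\eta\|_\infty\sim\eta^{-1}$ and forces $\eta\gg\sqrt\delta$, while controlling $Q_\delta(\rho)-Q_\delta(\rho_\eta)$ via gradient equicontinuity forces a bound of the form $\omega(\eta)\ll\sqrt\delta$; nothing guarantees these are compatible uniformly over $\mcl B$.

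This is exactly where the paper spends most of its effort. Proposition~\ref{prop-mean-rho} is the statement that $\beta^{-1}\mathbb E[\rho(Z)-\rho(X)]\to 0$ uniformly, and its proof is a Cauchy--Schwarz/Fourier argument: one writes the quantity as $-(\rho,\phi)_\nabla$ for an explicit $\phi$ built from the occupation kernel $\theta$, bounds by $(\rho,\rho)_\nabla^{1/2}(\phi,\phi)_\nabla^{1/2}$, and then proves $(\phi,\phi)_\nabla\to 0$ uniformly via Plancherel together with the scaling of $\widehat\theta$ and the $W^{1,1}$-equicontinuity of $\mcl B$ (Propositions~\ref{prop::defofTheta}--\ref{prop::differencePhi}). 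The Schwartz hypothesis on the occupation measure in Definition~\ref{defn-blm-gen} is used precisely here. Your sketch identifies the right difficulties and the right tools (equicontinuity, the cancellation $\int_D e^{\rho/2}\nabla\rho=0$), but the step from ``formally $\int_D\Delta\rho=0$'' to a uniform $o(1)$ estimate is the main analytic content of the theorem, and it is not supplied.
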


\subsection{Proof outline}

In this section, we let $\mcl B$ be a fixed collection of functions $\rho$ satisfying the hypotheses of Theorem~\ref{thm::loopconvthm2}. To prove Theorem~\ref{thm::loopconvthm2}, we compare $\len_\rho$ to a simpler notion of the length of a loop with respect to $e^{\rho} |dz|^2$, in which we approximate $e^\rho$ along the loop by its value at the center of the loop.  

\begin{defn}
We define the \emph{center $\rho$-length} $\clen_\rho(L)$ of a loop $L$ in $\BB R^2$ centered at a point $x$ as $\int_0^t e^{\rho(x)} ds = e^{\rho(x)} t$.
\end{defn}

We observe that the cutoff $\clen_\rho(L) = \delta$ corresponds to a unique value of $\len(L)$:

\begin{prop}
\label{prop-cdelta}
Let $\delta>0$, $x \in D$ and $\ell \in \mcl L$, and set $\cdelta := e^{-\rho(x)} \delta$.  The loop $L = (x,t,\ell)$ satisfies $\clen_\rho(L) = \delta$ iff $t = \cdelta$, and $\clen_\rho(L) \geq \delta$ iff $t \geq \cdelta$.
\end{prop}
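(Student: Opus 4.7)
The plan is to prove this by direct unwinding of the definitions, so there is really nothing to do beyond checking a chain of equivalences. By the definition just stated, for a loop $L = (x, t, \ell)$ we have $\clen_\rho(L) = \int_0^t e^{\rho(x)}\, ds = e^{\rho(x)} t$, since the integrand is constant in $s$ (it is evaluated at the center $x$ of the loop, not at $L(s)$).

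First I would handle the equality statement. We have $\clen_\rho(L) = \delta$ if and only if $e^{\rho(x)} t = \delta$. Since $\rho$ is real-valued (indeed $\rho \in \Lip(D)$ by the standing hypothesis), $e^{\rho(x)}$ is a strictly positive real number, and we may multiply both sides by $e^{-\rho(x)}$ to get the equivalent condition $t = e^{-\rho(x)} \delta = \cdelta$.

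Next I would handle the inequality statement by the same monotonicity argument. Because $e^{\rho(x)} > 0$, the map $t \mapsto e^{\rho(x)} t$ is strictly increasing, so $\clen_\rho(L) \geq \delta$ is equivalent to $e^{\rho(x)} t \geq \delta$, which in turn is equivalent (after multiplying by the positive scalar $e^{-\rho(x)}$) to $t \geq e^{-\rho(x)} \delta = \cdelta$.

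There is no genuine obstacle here; the statement is really a rewriting that will be convenient in the sequel because it converts the $\rho$-dependent cutoff $\clen_\rho(L) \geq \delta$ on the parameter space $D \times (0,\infty) \times \mcl L$ into a plain cutoff $t \geq \cdelta$, with all of the $\rho$-dependence absorbed into the threshold $\cdelta = e^{-\rho(x)}\delta$. The only point worth flagging is that we are implicitly using that $\rho$ is pointwise well-defined at $x$ — which is fine under the Lipschitz hypothesis of Theorem~\ref{thm::loopconvthm2}, since Lipschitz functions are continuous and hence have unambiguous pointwise values.
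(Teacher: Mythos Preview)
Your proof is correct and takes essentially the same approach as the paper, which simply states that the result follows trivially from the definition of center $\rho$-length. You have merely spelled out the one-line computation in more detail.
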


\begin{proof}
The result follows trivially from the definition of center $\rho$-length.
\end{proof}

Proposition~\ref{prop-cdelta} immediately implies the following trivial analogue of Theorem~\ref{thm::loopconvthm2} for center $\rho$-length.

\begin{prop}
\label{prop-clenarea}
The mass of loops $L$ centered in $D$ with $\clen_\rho \geq \delta$ with respect to the Brownian loop measure is given by
\begin{equation}\label{eqn::clenarea} \int_D \int_{\mathcal L} \int_{\cdelta}^\infty \frac{1}{t^2}\, dt \, d\ell \, dx =  \int_D \cdelta^{-1} dx = \int_D \frac{e^{\rho(x)}}{\delta} dx = \frac{\area_\rho(D)}{\delta}. \end{equation}
\end{prop}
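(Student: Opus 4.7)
The plan is to reduce the claim to a one-line Fubini computation using Proposition~\ref{prop-cdelta}. First I would unpack the left-hand side using Definition~\ref{defn-blm}: writing a loop as $L = (x,t,\ell)$, the Brownian loop measure factors as $\frac{1}{2\pi t^2}\,dx\,dt\,d\ell$ (or, in the generalized loop measure case of Definition~\ref{defn-blm-gen}, as $\frac{1}{t^2}\,dx\,dt\,d\ell$, matching the integrand in the statement). The crucial structural observation, which is exactly what Proposition~\ref{prop-cdelta} supplies, is that the event $\{\clen_\rho(L) \geq \delta\}$ depends on the loop only through $x$ and $t$, not through $\ell$: it is equivalent to $t \geq \cdelta(x) = e^{-\rho(x)}\delta$. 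Thus the constraint decouples from the $\ell$-integral.

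Next, since $d\ell$ is a probability measure on $\mcl L$, the integral $\int_{\mcl L} d\ell$ simply contributes a factor of $1$. The inner $t$-integral is an elementary antiderivative:
\begin{equation*}
\int_{\cdelta(x)}^\infty \frac{1}{t^2}\,dt = \frac{1}{\cdelta(x)} = \frac{e^{\rho(x)}}{\delta}.
\end{equation*}
Finally, integrating over $x \in D$ gives $\int_D e^{\rho(x)} \delta^{-1}\,dx = \delta^{-1} \area_\rho(D)$ by Definition~\ref{def-rho-length-vol}, which is the claimed value. I would then assemble these three steps into a single chain of equalities, which is in fact exactly the display already appearing in the statement of the proposition.

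There is no real obstacle here: the content of the proposition is that the center $\rho$-length cutoff induces a pointwise (in $x$) rescaling $\delta \mapsto e^{-\rho(x)}\delta$ of the Euclidean cutoff, and the scale invariance $\int_s^\infty t^{-2}\,dt = s^{-1}$ of the loop measure then converts this rescaling into a multiplicative factor of $e^{\rho(x)}$ inside the $x$-integral, producing the $\rho$-area of $D$. The proposition is essentially a warm-up reformulation of~\eqref{eqn::areadelta} that isolates the trivial aspect of Theorem~\ref{thm::loopconvthm2}, isolating the nontrivial Dirichlet energy term $\frac{b}{2}(\rho,\rho)_\nabla$ as the discrepancy between $\clen_\rho$ and the actual $\rho$-length $\len_\rho$, to be analyzed subsequently.
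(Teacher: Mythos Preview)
Your proposal is correct and takes essentially the same approach as the paper: the paper's proof is the single sentence ``The result is an immediate consequence of Proposition~\ref{prop-cdelta},'' and you have simply unpacked that consequence explicitly via the Fubini computation and the elementary $t$-integral.
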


\begin{proof}
The result is an immediate consequence of Proposition~\ref{prop-cdelta}.
\end{proof}

We can therefore restate Theorem~\ref{thm::loopconvthm2} as the assertion that if we change our notion of loop length from $\clen_\rho$ to $\len_\rho$, the $\mu$-mass of loops with length $\geq \delta$ increases by  $\frac{b}{2}(\rho,\rho)_\nabla$, up to an error that is  $o(1)$ as $\delta \to 0$ uniformly in $\rho \in \mcl B$.

We divide the proof of Theorem~\ref{thm::loopconvthm2} into two stages. First, in Section~\ref{sec::firstlemma} we show that, up to a uniform $o(1)$ error, replacing $\clen_\rho$ with  $\len_\rho$ has the effect
of subtracting $\delta^{-1}$ times the average discrepancy between the value of $e^\rho$ along a Brownian loop and the value of $e^\rho$ at its center.

\begin{lem} \label{lem::convolveestimate}
Consider a loop sampled from $\mu$ conditioned to have its center in $D$ and length $\beta$.  Let $X$ denote its center, and let $Z$ denote a point on the loop sampled uniformly with respect to length.  Then the $\mu$-mass of loops $L$ with center in $D$ and $\len_\rho(L) \geq \delta$ is equal to the $\mu$-mass of loops $L$ with center in $D$ and $\clen_\rho(L) \geq \delta$, minus 
\begin{equation}    
    \label{eqn-convolve-estimate}
   \frac{1}{\delta}  \mathbb E[e^{\rho(Z)}-e^{\rho(X)}] + o(1),
\end{equation}
    with the $o(1)$ error tending to 0 as $\delta\to 0$ at a rate that is uniform in $\rho\in \mathcal B$. (In \eqref{eqn-convolve-estimate} the expectation is w.r.t.\ the {\em overall} law of $X$ and $Z$ as described above.)
\end{lem}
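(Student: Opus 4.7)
The plan is to reduce, for each fixed centre $x \in D$ and loop shape $\ell \in \mathcal L$, to a one-dimensional comparison in the length parameter $t$, and then to integrate. Writing $L_t = (x,t,\ell)$, both $t \mapsto \clen_\rho(L_t) = e^{\rho(x)}t$ and $t \mapsto \len_\rho(L_t) =: F(t)$ are continuous, strictly increasing, and vanish at $0$. Let $\beta := \beta(x) = e^{-\rho(x)}\delta$ (as in Proposition~\ref{prop-cdelta}) and $t^*=t^*(x,\ell,\delta)$ be the unique values at which each function attains $\delta$. Then, by the same computation as in Proposition~\ref{prop-clenarea},
\[
M_c - M_\ell := \mu\{L:\cen(L)\!\in\! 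D,\,\clen_\rho(L)\!\geq\! \delta\} - \mu\{L:\cen(L)\!\in\! D,\,\len_\rho(L)\!\geq\! \delta\} = \int_D \int_{\mathcal L}\!\Bigl(\tfrac{1}{\beta}-\tfrac{1}{t^*}\Bigr)d\ell\,dx.
\]

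Next I would extract the leading-order part. Writing $F(t) = t\,f(t)$ with $f(t) := \int_0^1 e^{\rho(x+\sqrt t\,\ell(u))}\,du$, the identity $t^* f(t^*) = \delta = \beta f(0)$ yields the algebraic relation
\[
\frac{1}{\beta}-\frac{1}{t^*} \;=\; \frac{f(0)-f(t^*)}{\beta f(0)} \;=\; -\frac{f(t^*)-f(0)}{\delta}.
\]
Replacing $f(t^*)$ by $f(\beta)$ (producing a remainder $\mathcal R$ to be controlled) gives
\[
M_c - M_\ell \;=\; -\frac{1}{\delta}\int_D\int_{\mathcal L}\int_0^1 \bigl(e^{\rho(x+\sqrt\beta\,\ell(u))}-e^{\rho(x)}\bigr)\,du\,d\ell\,dx \;+\; \mathcal R(\delta),
\]
which, for $X$ the centre and $Z = X + \sqrt{\beta(X)}\,\ell(U)$ a uniform point on the loop with $U \sim \mathrm{Unif}(0,1)$, matches the quantity $\tfrac{1}{\delta}\mathbb E[e^{\rho(Z)}-e^{\rho(X)}]$ appearing in~\eqref{eqn-convolve-estimate} (up to the sign and normalisation conventions implicit in the statement).

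The main obstacle is showing $|\mathcal R(\delta)| \to 0$ uniformly in $\rho\in\mathcal B$. The remainder is governed by $\int_D\int_{\mathcal L}\tfrac{|f(t^*)-f(\beta)|}{\delta}\,d\ell\,dx$, and the pointwise Lipschitz estimate $|f(t^*)-f(\beta)|\lesssim L^2\beta\,|\ell|^2$ divided by $\delta$ gives only $O(L^2|\ell|^2)$---bounded, but not vanishing. To upgrade this to $o(1)$ one exploits the mean-zero property $\int_0^1\ell(u)\,du = 0$: writing $\rho(x+\sqrt\beta\ell(u))-\rho(x) = \sqrt\beta\int_0^1 \nabla\rho(x+s\sqrt\beta\ell(u))\cdot\ell(u)\,ds$ and subtracting off the constant $\nabla\rho(x)\cdot\ell(u)$ (which vanishes after $\int_0^1 du$) leaves a contribution whose $L^2(D)$-norm is controlled by the $L^2$-modulus of continuity of $\nabla\rho$ at scale $\sqrt\beta$. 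The precompactness of $\mathcal B$ in $W^{1,1}$ (Remark~\ref{rmk-uniform-equicontinuity}) gives the Fréchet--Kolmogorov uniform $L^1$-equicontinuity of $\{\nabla\rho\}_{\rho\in\mathcal B}$, which upon interpolation with the uniform $L^\infty$-bound on $\nabla\rho$ produces a uniform $L^2$-modulus $\omega_{\mathcal B}(|h|)\to 0$. Finally, the Schwartz-density assumption on the expected occupation measure of $d\ell$ (Definition~\ref{defn-blm-gen}) supplies the moment bounds on $|\ell|$ needed to interchange the $x$-, $u$-, and $\ell$-integrations. Orchestrating these three ingredients so that the resulting bound is uniformly $o(1)$ on $\mathcal B$, rather than merely bounded, is the technical crux of the proof.
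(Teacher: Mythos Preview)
Your strategy is essentially the paper's, and your exact identity
\[
\frac{1}{\beta}-\frac{1}{t^*}=\frac{f(0)-f(t^*)}{\delta}
\]
is a genuine streamlining: the paper reaches the same leading term $-\delta^{-1}(f(\beta)-f(0))$ by first approximating $\alpha-\beta$ (Proposition~\ref{prop-alpha-curve}) and then Taylor-expanding $r\mapsto 1/r$ (Proposition~\ref{prop-alpha-inverse}), whereas your identity produces it in one step with the single remainder $\mathcal R=\delta^{-1}\bigl(f(\beta)-f(t^*)\bigr)$. If you compare, the paper's accumulated error is exactly the same quantity.

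Two points need repair. First, the claim that $t\mapsto F(t)=\len_\rho(x,t,\ell)$ is strictly increasing on all of $(0,\infty)$ is not true in general: $F'(t)=f(t)+tf'(t)$, and $|tf'(t)|$ can dominate once $\mathrm{diam}(\ell)\sqrt{t}$ is of order one relative to the Lipschitz constant and the lower bound on $e^\rho$. The paper proves monotonicity only when $\mathrm{diam}(\ell)\sqrt{\delta}<\smallDiam$ (Proposition~\ref{prop-alpha}), artificially sets $\alpha=\delta$ otherwise, and then shows that the large-diameter loops contribute $o(1)$ via the Schwartz tail bound (Proposition~\ref{prop-integral}). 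You need this case split; it is not hard, but it cannot be skipped.

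Second, your description of how to upgrade $\mathcal R$ from bounded to $o(1)$ is aimed at the wrong object. You expand $\rho(x+\sqrt\beta\,\ell(u))-\rho(x)$ and invoke the mean-zero of $\ell$, but that expansion is $f(\beta)-f(0)$, the \emph{main} term, not the remainder $f(t^*)-f(\beta)$. The mean-zero trick does enter, but through $f'$: since $\int_0^1\ell(u)\,du=0$ one may write
\[
f'(s)=\frac{1}{2\sqrt s}\int_0^1\bigl(\nabla e^{\rho}(x+\sqrt s\,\ell(u))-\nabla e^{\rho}(x)\bigr)\cdot\ell(u)\,du,
\]
giving $|f'(s)|\le c(x)\,\mathrm{diam}(\ell)/\sqrt s$ with $c(x)$ controlled by the oscillation of $\nabla e^\rho$ near $x$. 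Combined with $|t^*-\beta|=\beta|f(0)-f(t^*)|/f(t^*)\lesssim c(x)\,\mathrm{diam}(\ell)\,\delta^{3/2}$, this yields $\delta^{-1}|f(t^*)-f(\beta)|\lesssim c(x)^2\mathrm{diam}(\ell)^2$. The paper makes $c(x)$ uniformly small on a good set $D\setminus \smallset$ of nearly full measure (Proposition~\ref{prop-derivative}), handling the bad set and the large-$\mathrm{diam}(\ell)$ tail separately. Your proposed $L^2(D)$-modulus route would need to show $\int_D c(x)^2\,dx\to 0$ uniformly, which does follow from the $L^1$-equicontinuity of $\nabla\rho$ interpolated against the uniform $L^\infty$ bound, but that step should be made explicit rather than folded into the ``orchestration'' remark at the end.
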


We then complete the proof of Theorem~\ref{thm::loopconvthm2}  in Section~\ref{sec::secondlemma} by showing that the quantity~\eqref{eqn-convolve-estimate} equals $\frac{b}{2} (\rho,\rho)_{\nabla}$ up to a uniform $o(1)$ error.

\begin{lem} \label{lem::convolveestimateconverges} The quantity~\eqref{eqn-convolve-estimate} equals $\frac{b}{2} (\rho, \rho)_\nabla$ plus an error term that converges to $0$ as $\delta \rightarrow 0$ uniformly in $\rho \in \mcl B$.
\end{lem}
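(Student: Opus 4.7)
The plan is to Taylor-expand $e^\rho$ around the loop center in the smooth case, and then extend to Lipschitz $\rho$ via mollification, using the precompactness hypothesis to ensure uniformity over $\mcl B$. Conditional on its center $X$ and length $\beta$, a loop sampled as in Lemma~\ref{lem::convolveestimate} satisfies $Z = X + \sqrt\beta\,W$, where $W$ has the rotationally invariant expected occupation density of $\ell$; thus $\mathbb E[W] = 0$, $\mathbb E[W_1^2] = \mathbb E[W_2^2] = b$, and $\mathbb E[W_1 W_2] = 0$. By Proposition~\ref{prop-cdelta}, the threshold length is $\beta = e^{-\rho(X)}\delta$, so the goal is to show that
\begin{equation*}
\frac{1}{\delta}\int_D \mathbb E_W\!\bigl[e^{\rho(X+\sqrt{e^{-\rho(X)}\delta}\,W)} - e^{\rho(X)}\bigr]\,dx \;\longrightarrow\; \tfrac{b}{2}(\rho,\rho)_\nabla
\end{equation*}
as $\delta\to 0$, uniformly in $\rho\in\mcl B$.

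For smooth $\rho$, set $f = e^\rho$ and Taylor-expand to second order. The first-order term vanishes in $W$-expectation by the symmetry $W\stackrel{d}{=}-W$; the second-order term equals $\tfrac{b\beta}{2}\Delta f(X) = \tfrac{b\beta}{2}e^{\rho(X)}(|\nabla\rho|^2 + \Delta\rho)(X)$, using $\mathbb E[W^\top A W] = b\,\tr A$ for symmetric $A$ together with $\Delta e^\rho = e^\rho(|\nabla\rho|^2 + \Delta\rho)$. Substituting $\beta = e^{-\rho(X)}\delta$ cancels the $e^{\rho(X)}$ factor, so after dividing by $\delta$ and integrating over $X\in D$ the main term is $\tfrac{b}{2}\int_D(|\nabla\rho|^2 + \Delta\rho)\,dx$; the Laplacian integral vanishes by the divergence theorem since $\rho$ has compact support in $D$, producing the target $\tfrac{b}{2}(\rho,\rho)_\nabla$. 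Observe that $\int_D e^\rho(|\nabla\rho|^2 + \Delta\rho)\,dx = 0$ by integration by parts, so a constant $\beta$ would give no Dirichlet energy; the $e^{-\rho}$ factor in $\beta(X)$ is essential.

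For Lipschitz $\rho\in\mcl B$, mollify: let $\rho_\ep := \rho * \phi_\ep$ for a standard smooth mollifier. Then $\rho_\ep$ is smooth with the common Lipschitz bound, and $\|\nabla\rho - \nabla\rho_\ep\|_{L^1(D)}\to 0$ as $\ep \to 0$ \emph{uniformly in $\rho\in \mcl B$} by the Fréchet--Kolmogorov characterization of precompactness in $W^{1,1}$ (Remark~\ref{rmk-uniform-equicontinuity}). Apply the smooth computation of the previous paragraph to $\rho_\ep$. To compare with $\rho$, use the a.e.\ identity $e^{g(Z)} - e^{g(X)} = \int_0^1 e^{g(X+s(Z-X))}\nabla g(X + s(Z-X))\cdot (Z-X)\,ds$ (valid for Lipschitz $g$) for both $g = \rho$ and $g = \rho_\ep$. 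Subtracting, using Fubini with the change of variables $Y = X + s(Z-X)$, and exploiting $\mathbb E[W]=0$ to cancel the leading contribution, the difference is controlled by $\|\nabla\rho - \nabla\rho_\ep\|_{L^1}$ and $\|\rho-\rho_\ep\|_{L^\infty}\le C\ep$. Similarly, $(\rho_\ep,\rho_\ep)_\nabla \to (\rho,\rho)_\nabla$ uniformly in $\mcl B$.

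The main obstacle is balancing two competing rates uniformly. The Taylor remainder for $\rho_\ep$ is controlled via $\|D^3 e^{\rho_\ep}\|_{L^\infty} \lesssim \ep^{-2}$, requiring $\ep$ not too small in terms of $\delta$, while the mollification error shrinks as $\ep \to 0$ at a rate encoded in the modulus $\omega(\ep) := \sup_{\rho\in\mcl B}\|\nabla\rho - \nabla\rho_\ep\|_{L^1}$ from Remark~\ref{rmk-uniform-equicontinuity}. One chooses $\ep = \ep(\delta)\to 0$ slowly enough so that both errors vanish; the common modulus $\omega$ — rather than individual $\rho$-dependent moduli — is precisely what makes this uniform over $\mcl B$. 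The counterexample preceding Theorem~\ref{thm::loopconvthm2} shows this uniform equicontinuity condition is genuinely necessary.
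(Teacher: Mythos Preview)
Your approach is sound and genuinely different from the paper's. The paper does \emph{not} mollify. Instead it writes $e^{\rho(Z)}-e^{\rho(X)}=e^{\rho(X)}\bigl(Y+\tfrac{Y^2}{2}+O(Y^3)\bigr)$ with $Y=\rho(Z)-\rho(X)$, and the entire difficulty is pushed into the linear term $\delta^{-1}\mathbb E[e^{\rho(X)}Y]$. For Lipschitz $\rho$ this is a priori only $O(\delta^{-1/2})$, and the paper devotes several propositions (Proposition~\ref{prop-mean-rho} and its Fourier-analytic supporting results involving the functions $\Theta_t$, $\phi$, $F_{\rho,r}$) to showing it is $o(1)$; the quadratic term then yields $\tfrac{b}{2}(\rho,\rho)_\nabla$ directly for Lipschitz $\rho$ using only the $L^1$-equicontinuity of $\nabla\rho$. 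Your route avoids all Fourier analysis by making the linear term vanish identically for $\rho_\ep$, at the price of a mollification comparison and a rate-balancing argument. Each approach has a cost: the paper's Fourier lemma is somewhat technical but works at fixed $\rho$; yours is more elementary but must juggle the scales $\ep$ and $\delta$.

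One point deserves more care than you give it. When you ``use Fubini with the change of variables $Y=X+s(Z-X)$ and exploit $\mathbb E[W]=0$,'' the map $X\mapsto Y$ depends on $X$ through $\beta(X)=e^{-\rho(X)}\delta$, so its (a.e.) Jacobian is $1-\tfrac{s\sqrt\delta}{2}e^{-\rho(X)/2}\nabla\rho(X)\cdot W$ and the inverse carries $W$-dependence. The cancellation you want is not automatic; one must check that after the change of variables the factor $\sqrt{\beta(\Psi^{-1}(Y))}\,|J_{\Psi^{-1}}(Y)|$ equals $\sqrt{\beta(Y)}+O(s\delta|W|)$ (both corrections happen to be of order $\delta$, not $\sqrt\delta$, because $\sqrt\beta$ itself has Lipschitz constant $O(\sqrt\delta)$). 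With this, the leading part is $\int\nabla(e^\rho-e^{\rho_\ep})(Y)\sqrt{\beta(Y)}\,dY\cdot\mathbb E[W]=0$, and the remainder is $O(\|\nabla e^\rho-\nabla e^{\rho_\ep}\|_{L^1})$, uniformly in $\delta$, as you claim. A naive bound on the Jacobian correction would give $O(\ep/\sqrt\delta)$ instead, which cannot be balanced against the Taylor remainder $O(\delta^{1/2}\ep^{-2})$. So the argument is correct but this step is where the proof actually lives.

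A small related remark: if you keep the same $Z$ (built from $\rho$) when applying the smooth computation to $\rho_\ep$, the second-order term gives $\tfrac{b}{2}\int e^{-\rho}\Delta e^{\rho_\ep}$, which after integration by parts equals $\tfrac{b}{2}\int e^{\rho_\ep-\rho}\nabla\rho\cdot\nabla\rho_\ep=\tfrac{b}{2}(\rho,\rho)_\nabla+O(\ep+\omega(\ep))$; this is cleaner than going through $(\rho_\ep,\rho_\ep)_\nabla$ and avoids a separate $\beta\to\beta_\ep$ comparison.
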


\section{Loop mass difference vs.\ expected length discrepancy} \label{sec::firstlemma}

In this section, we prove Lemma~\ref{lem::convolveestimate} in three steps. 
\medskip

\noindent
\textit{Step 1: Establishing a length threshold $\alpha > 0$ corresponding to $\rho$-length $\delta$.}
We saw in Proposition~\ref{prop-cdelta} that, with $\cdelta = e^{-\rho(x)} \delta$, we have $\clen_\rho(L) = \delta$ if and only if $t = \cdelta$, and $\clen(L) \geq \delta$ if and only if $t \geq \cdelta$. To prove Lemma~\ref{lem::convolveestimate}, we 
establish a similar result for $\rho$-length.  We will show in Proposition~\ref{prop-alpha} below that, for $x \in D$ and $\ell \in \mcl L$ with the diameter of $(x,\delta,\ell)$ sufficiently small, there exists a threshold $\alpha > 0$ such that $\len_\rho((x,t,\ell)) = \delta$ if and only if $t = \alpha$, and $\len((x,t,\ell)) \geq \delta$ if and only if $t \geq \alpha$.  We note that, unlike $\cdelta$, the threshold $\alpha$ may depend on $\ell$ as well as $x$ and $\delta$.
\medskip

\noindent
\textit{Step 2: Relating the difference in the masses of loops to the quantity $\alpha^{-1}$.}
We saw in Proposition~\ref{prop-clenarea} that the $\mu$-mass of loops with center $\rho$-length $\geq \delta$ can be expressed as an integral of $\cdelta^{-1}$.  In Proposition~\ref{prop-integral} below, we similarly express the $\mu$-mass of loops with $\rho$-length $\geq \delta$ as an integral of $\alpha^{-1}$, plus a uniform $o(1)$ error. This reduces the task of proving Lemma~\ref{lem::convolveestimate} to analyzing the difference of integrands $\alpha^{-1} - \cdelta^{-1}$.
\medskip

\noindent
\textit{Step 3: Expressing the difference $\alpha^{-1} - \cdelta^{-1}$ in terms of a difference in lengths.}
We first express the difference $\alpha - \cdelta$ in terms of a difference between the $\rho$-length and center $\rho$-length of a loop (Proposition~\ref{prop-alpha-curve}).  We then apply this result in Proposition~\ref{prop-alpha-inverse} to derive a similar expression for $\alpha^{-1} - \cdelta^{-1}$, which immediately yields Lemma~\ref{lem::convolveestimate}.
\medskip

Having described the main steps of the proof of Lemma~\ref{lem::convolveestimate}, we now proceed with Step 1---proving the existence of the threshold $\alpha$. As we observed in Proposition~\ref{prop-cdelta}, the existence of the analogous threshold $\cdelta$ for center $\rho$-length is trivial, since for fixed $\rho, x,$ and $\ell$, the function $t \mapsto \clen_\rho(L)$ is linear with slope $e^{\rho(x)}$.  This is not the case for $\rho$-length, so we proceed by showing its derivative as a function of $t$ is positive on a sufficiently large interval.  We first observe that, since the functions $\rho \in \mathcal B$ are uniformly bounded above and below, we can crudely bound the function $t \mapsto \len_\rho(L)$ between two linear functions uniformly in $\rho,x,$ and $\ell$.

\begin{prop}
\label{prop-metric-ratio}
There exists a constant $\CC>0$ such that, for each $\rho \in \mathcal{B}$, $x \in D$ and $\ell \in \mathcal{L}$, the loop $L = (x,t,\ell)$ satisfies  \begin{equation} \label{eqn::metricratio}\CC^{-1} \leq \frac{\len_\rho(L)}{\len(L)} \leq \CC.
\end{equation}
In other words, $\len(L)$ and $\len_\rho(L)$ length agree up to a universal constant factor. 
\end{prop}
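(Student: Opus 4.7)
The plan is to deduce this directly from the uniform boundedness observed in Remark~\ref{rmk-boundedness}. Specifically, for any $\rho \in \Lip(D)$ with Lipschitz constant at most $L_0$ and any $y \in \mathbb R^2$, since $\rho$ vanishes outside the bounded set $D$ and is continuous, there is some point $y_0 \in \partial D$ (or just outside $D$) with $\rho(y_0)=0$, so $|\rho(y)| \leq L_0 \cdot \diam(D)$. Thus condition (1) alone, combined with the fact that functions in $\Lip(D)$ vanish outside the bounded set $D$, already gives a uniform constant $M>0$ with $|\rho(y)| \leq M$ for every $\rho\in \mathcal B$ and every $y \in \mathbb R^2$.

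From here the proof is a one-line computation. By Definition~\ref{def-rho-length-vol},
$$\len_\rho(L) \;=\; \int_0^{\len(L)} e^{\rho(L(s))}\,ds,$$
and since $e^{-M} \leq e^{\rho(L(s))} \leq e^{M}$ pointwise in $s$, integrating yields
$$e^{-M}\,\len(L) \;\leq\; \len_\rho(L) \;\leq\; e^{M}\,\len(L).$$
Setting $\CC := e^{M}$ gives~\eqref{eqn::metricratio}. Note that the integrand is well-defined and continuous in $s$ for any fixed continuous $L$, since $\rho$ is Lipschitz and hence continuous.

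There is no real obstacle here: the statement is essentially a restatement of Remark~\ref{rmk-boundedness} combined with the definition of $\len_\rho$. The only point worth being explicit about is that the $W^{1,1}$-precompactness hypothesis (2) plays no role in this proposition; condition (1) together with the fact that $\rho$ vanishes outside the bounded domain $D$ is what is actually being used, and this same pointwise $L^\infty$ bound on $\rho$ will of course be reused repeatedly in subsequent steps of the proof of Lemma~\ref{lem::convolveestimate}.
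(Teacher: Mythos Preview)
Your proof is correct and follows exactly the same approach as the paper: the paper simply says the result follows immediately from the uniform boundedness of $\rho$ noted in Remark~\ref{rmk-boundedness}. Your version is just a slightly more explicit unpacking of that remark (including the observation that only hypothesis~(1) is needed here), but the argument is identical.
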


\begin{proof}
The lemma follows immediately from the fact that $\rho$ is bounded from above and below by a constant uniform in $\rho \in \mathcal {B}$, as noted in Remark~\ref{rmk-boundedness}.
\end{proof}

    In addition, the collection of functions $\{e^\rho\}_{\rho\in\mathcal B}$ satisfies the same conditions of $\mathcal B$, possibly with different bounds.
\begin{prop}
    \label{prop-exp-lip}
    The functions $e^\rho$ for $\rho\in\mathcal B$ also have uniformly bounded Lipschitz constants and are precompact in $W^{1,1}(D)$.
\end{prop}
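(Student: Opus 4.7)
The plan is to derive both conclusions from the hypotheses on $\mathcal B$ by exploiting the uniform $L^\infty$ bound noted in Remark~\ref{rmk-boundedness}. Let $C_0 := \sup_{\rho \in \mathcal B} \Lip(\rho) < \infty$ and $M := \sup_{\rho \in \mathcal B} \|\rho\|_\infty < \infty$. Since the real function $t \mapsto e^t$ is $e^M$-Lipschitz on $[-M,M]$, the composition satisfies
\begin{equation*}
|e^{\rho(x)} - e^{\rho(y)}| \leq e^M |\rho(x) - \rho(y)| \leq e^M C_0 |x - y|,
\end{equation*}
which gives the uniform Lipschitz bound $\Lip(e^\rho) \leq e^M C_0$. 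By Rademacher's theorem $\nabla \rho$ exists almost everywhere with $|\nabla \rho| \leq C_0$ a.e., and the chain rule gives $\nabla e^\rho = e^\rho \nabla \rho$ a.e.

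For precompactness in $W^{1,1}(D)$ I will apply the Fr\'echet-Kolmogorov criterion of Remark~\ref{rmk-uniform-equicontinuity} to the shifted family $\tilde u := e^\rho - 1$ rather than $e^\rho$ itself; this is equivalent (differing by a constant affects neither norm nor precompactness) but has the advantage that $\tilde u$ vanishes outside $D$, so the extension by zero used in Fr\'echet-Kolmogorov introduces no artificial jump at $\partial D$. The $L^1$ boundedness half of the criterion is immediate: $\|\tilde u\|_{L^1(D)} \leq e^M |D|$, and $\|\nabla \tilde u\|_{L^1(D)} \leq e^M \sup_{\rho \in \mathcal B} \|\nabla \rho\|_{L^1(D)} < \infty$ because $\mathcal B$ is bounded in $W^{1,1}(D)$.

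It remains to verify uniform $L^1$-equicontinuity under translation, separately for $\tilde u$ and $\nabla \tilde u$. For $\tilde u$, the pointwise bound $|\tilde u(x+h) - \tilde u(x)| \leq e^M |\rho(x+h) - \rho(x)|$ reduces the claim to the uniform equicontinuity of $\mathcal B$ itself, which holds by Fr\'echet-Kolmogorov since $\mathcal B$ is precompact in $L^1(D)$. For $\nabla \tilde u = e^\rho \nabla \rho$, I split
\begin{equation*}
e^{\rho(x+h)} \nabla \rho(x+h) - e^{\rho(x)} \nabla \rho(x) = e^{\rho(x+h)}\bigl(\nabla \rho(x+h) - \nabla \rho(x)\bigr) + \bigl(e^{\rho(x+h)} - e^{\rho(x)}\bigr)\nabla \rho(x),
\end{equation*}
so that the $L^1$-norm of the left-hand side is bounded by
\begin{equation*}
e^M \int_D |\nabla \rho(x+h) - \nabla \rho(x)|\,dx + C_0 e^M \int_D |\rho(x+h) - \rho(x)|\,dx.
\end{equation*}
Both integrals tend to zero uniformly in $\rho \in \mathcal B$ as $h \to 0$: the first because $\{\nabla \rho : \rho \in \mathcal B\}$ is precompact in $L^1(D)$, which is an immediate consequence of the $W^{1,1}$-precompactness of $\mathcal B$, and the second as already noted. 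I do not anticipate a genuine obstacle; the only subtle point is the behaviour near $\partial D$, which is handled cleanly by passing to $\tilde u = e^\rho - 1$. Conceptually, the whole argument is a routine product-rule bookkeeping, organized around the observation that $\nabla e^\rho = e^\rho \nabla \rho$ factors $\nabla e^\rho$ as the product of a uniformly bounded and uniformly Lipschitz function with a factor $\nabla \rho$ whose $L^1$-equicontinuity is built into the hypotheses on $\mathcal B$.
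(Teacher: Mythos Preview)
Your proof is correct and follows the same approach as the paper, but is considerably more detailed: the paper establishes the uniform Lipschitz bound by the identical argument (exponential is Lipschitz on the bounded range) and then dismisses the $W^{1,1}$-precompactness with ``Other conditions are straightforward to check.'' You have supplied those details in full via Fr\'echet--Kolmogorov, including the nice observation that working with $e^\rho - 1$ rather than $e^\rho$ keeps the zero-extension honest at $\partial D$.
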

\begin{proof}
    Since the functions $\rho\in\mathcal B$ are uniformly bounded, their images are contained in some finite closed interval. The exponential function is Lipschitz on any finite closed interval, so the composition $\exp\circ \rho$ is also Lipschitz. Other conditions are straightforward to check.
\end{proof}

We now apply the uniform boundedness of the Lipschitz constants of $e^\rho$ for $\rho \in \mcl B$ to show that,
when $\diam(L) = \diam(\ell)\sqrt{t}$ is not too large,
the derivative of the function $t \mapsto \len_\rho(L)$ is uniformly close to that of the linear function $t \mapsto \clen_\rho(L)$.

\begin{prop} \label{prop-derivative}
For any $\ep>0$, there exists $\smalldiam=\smalldiam(\ep)>0$ and a family of sets $\{\smallset\}_{\rho\in\mathcal B}$ with $\Vol(\smallset)\le \ep$ such that the following is true. For each $\rho \in \mathcal B$, $x\in D\setminus \smallset$, and $\ell\in \mathcal L$, the derivative \[ \frac{\partial}{\partial t} \len_\rho((x,t,\ell)) \] exists and differs from \[ \frac{\partial}{\partial t} \clen_\rho((x,t,\ell)) = e^{\rho(x)} \] by at most $\ep\, \diam(\ell)\sqrt{t}$ for almost every $t>0$ with $\diam(\ell)\sqrt{t}<\smalldiam$. Furthermore, there exists a constant $\wt \Lambda>0$ such that the previous statement it true for arbitrary $\smalldiam$ and $\smallset=\emptyset$ when we choose $\ep=\wt\Lambda$.
\end{prop}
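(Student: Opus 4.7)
The plan is to compute $\partial_t\len_\rho$ explicitly, rearrange $\partial_t\len_\rho - e^{\rho(x)}$ as an integral of \emph{increments} of $\nabla e^\rho$ (exploiting that $\int_0^1 \ell(u)\,du = 0$), and then extract the exceptional set $\smallset$ from the uniform $L^1$-equicontinuity of $\{\nabla e^\rho\}_{\rho\in\mathcal B}$ provided by Proposition~\ref{prop-exp-lip} together with the Fr\'echet-Kolmogorov characterization of precompactness in Remark~\ref{rmk-uniform-equicontinuity}.

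Writing $L(s) = x + \sqrt{t}\,\ell(s/t)$ gives $\len_\rho(L) = t\int_0^1 e^{\rho(x+\sqrt{t}\ell(u))}\,du$, and since $e^\rho$ is Lipschitz, Rademacher's theorem, Fubini, and the product rule yield, for a.e.\ $t$,
\[ \partial_t\len_\rho = \int_0^1 e^{\rho(x+\sqrt{t}\ell(u))}\,du + \frac{\sqrt{t}}{2}\int_0^1 \nabla e^\rho(x+\sqrt{t}\ell(u))\cdot \ell(u)\,du. \]
Expanding $e^{\rho(y)}-e^{\rho(x)}=\int_0^1 \nabla e^\rho(x+s(y-x))\cdot(y-x)\,ds$ in the first term (valid for Fubini-a.e.\ $x$ by Rademacher), subtracting $e^{\rho(x)}$, and using $\int_0^1 \ell(u)\,du = 0$ to replace the bare values $\nabla e^\rho(x)$ inside both integrals, I obtain
\[ \partial_t\len_\rho - e^{\rho(x)} = \sqrt{t}\int_0^1\Bigl(\int_0^1[\nabla e^\rho(x+s\sqrt{t}\ell(u)) - \nabla e^\rho(x)]\,ds + \tfrac12[\nabla e^\rho(x+\sqrt{t}\ell(u)) - \nabla e^\rho(x)]\Bigr)\cdot\ell(u)\,du. \]
Bounding $|\ell(u)|\le \diam(\ell)$ and using $\sqrt{t}\,\diam(\ell)\le \smalldiam$ produces the pointwise estimate $|\partial_t\len_\rho - e^{\rho(x)}| \le \tfrac32\sqrt{t}\,\diam(\ell)\cdot\omega^\rho_\smalldiam(x)$, where $\omega^\rho_\smalldiam(x) := \sup_{|h|\le \smalldiam}|\nabla e^\rho(x+h)-\nabla e^\rho(x)|$.

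It thus suffices to construct, for each $\ep$, a scale $\smalldiam=\smalldiam(\ep)$ and sets $\{\smallset\}_{\rho\in\mathcal B}$ with $\Vol(\smallset)\le \ep$ such that $\omega^\rho_\smalldiam \le 2\ep/3$ off $\smallset$. I would do this by mollification: pick a standard mollifier $\phi_r$, set $g^\rho := \nabla e^\rho \ast \phi_r$, so that $g^\rho$ is Lipschitz with constant at most $C/r$, and Fr\'echet-Kolmogorov gives $\|\nabla e^\rho - g^\rho\|_{L^1}\le\eta$ uniformly in $\rho\in\mathcal B$ for $r\le r_0(\eta)$. Split $\omega^\rho_\smalldiam(x) \le 2\sup_{|h|\le \smalldiam}|(\nabla e^\rho - g^\rho)(x+h)| + \sup_{|h|\le\smalldiam}|g^\rho(x+h) - g^\rho(x)|$; the second term is $\le C\smalldiam/r$, which is $\le \ep/3$ once $\smalldiam\ll r\ep$, while the first is handled by taking $\smallset$ to be an appropriate enlargement (via a $\smalldiam$-neighborhood, or through a weak-type $(1,1)$ bound on a shifted Hardy-Littlewood maximal function) of the small-measure set $\{x : |\nabla e^\rho - g^\rho|(x) > \ep/12\}$, and choosing $\eta = \eta(\ep)$ small enough to guarantee $\Vol(\smallset)\le \ep$.

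The ``furthermore'' statement is immediate from the expression above with the trivial estimate $\omega^\rho_\smalldiam\le 2\|\nabla e^\rho\|_\infty$, uniformly bounded thanks to the Lipschitz hypothesis: we may take $\wt\Lambda := 3\sup_{\rho\in\mathcal B}\|\nabla e^\rho\|_\infty$ and $\smallset = \emptyset$. The main obstacle is upgrading the Fr\'echet-Kolmogorov $L^1$-smallness of $\nabla e^\rho - g^\rho$ (which holds for each fixed shift $h$) to uniform pointwise control over all $|h|\le\smalldiam$; the $\smalldiam$-neighborhood of a small-measure set need not itself be small, so one must carefully order the parameter choices $\ep \to \eta \to r \to \smalldiam$ and handle the residual $|\nabla e^\rho - g^\rho|$ via maximal-function tools. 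The mollification is precisely what moves the sup-in-$h$ difficulty from the rough function $\nabla e^\rho$ to the smooth function $g^\rho$, whose modulus of continuity is controlled directly by its Lipschitz constant.
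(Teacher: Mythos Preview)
Your computation of $\partial_t\len_\rho-e^{\rho(x)}$ and the reduction to the oscillation $\omega^\rho_d(x)=\esssup_{|h|\le d}|\nabla e^\rho(x+h)-\nabla e^\rho(x)|$ is exactly the paper's argument: the paper writes $\len_\rho((x,t,\ell))=A(\sqrt t)\,t$ with $A(r)=\int_0^1 e^{\rho(r\ell(s)+x)}\,ds$, obtains $\partial_t\len_\rho=\tfrac12 A'(\sqrt t)\sqrt t+A(\sqrt t)$, and uses $\int_0^1\ell(s)\,ds=0$ in the very same way to turn $A'(r)$ into an integral of increments of $\nabla e^\rho$; both the $A'$ term and the $A(\sqrt t)-A(0)$ term are then bounded by $\diam(\ell)\cdot\omega^\rho_d(x)$, and the ``furthermore'' clause comes from the uniform Lipschitz bound on $e^\rho$, just as you say.

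The one place your route diverges is the construction of $\smallset$. The paper does \emph{not} mollify: it simply sets $\smallset:=\{x:\omega^\rho_d(x)>\tfrac23\ep\}$ and asserts $\Vol(\smallset)\le\ep$ as following from the Fr\'echet--Kolmogorov equicontinuity of $\{\nabla e^\rho\}_{\rho\in\mcl B}$ in $L^1$. Your mollification detour is thus additional scaffolding not present in the paper. Your caution here is understandable---upgrading $L^1$-smallness of each fixed translate to smallness of the superlevel set of the pointwise $\esssup$ over all $|h|\le d$ is indeed a nontrivial step, and the paper treats it in a single sentence---but your proposed fix via Hardy--Littlewood maximal bounds does not actually resolve the issue you raise (the maximal function controls averages over balls, not essential suprema), so your route ends at the same point of difficulty the paper simply asserts past. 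In short: same proof, with an extra (and ultimately inconclusive) layer around the one delicate step.
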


\begin{proof}   
By Rademacher's theorem, any Lipschitz function is differentiable almost everywhere. In particular, by Proposition~\ref{prop-exp-lip}, there exists some constant $\wt{\Lambda}>0$ that does not depend on $\rho$ such that $\abs{\nabla(e^\rho)}<\frac23\wt\Lambda$ for almost every $x\in D$ for all $\rho\in\mathcal B$, where $\nabla(e^\rho)$ is a weak gradient of $e^\rho$. In addition, as $\mathcal B$ is precompact in $W^{1,1}(D)$, it follow from \eqref{eqn-fk-thm} that there exists some $\smalldiam>0$ such that the set
\begin{equation}
    \label{eqn-smallset}
    \smallset:=\{x\in D: \esssup_{|h| \leq \smalldiam} \abs{(\nabla e^{\rho})(x+h)-\nabla e^{\rho}(x)}>\frac23 \ep\}
\end{equation}
satisfies $\Vol(\smallset)\le\ep$ for each $\rho\in \mathcal B$.

For fixed $\rho\in \mathcal B$, $x\in D$, and $\ell\in \mathcal L$, we can write $\len_\rho((x,t,\ell))$ as $A(\sqrt{t})t$, where  $A(r) = A_{\rho,x,\ell}(r) := \int_0^1 e^{\rho}(r \ell(s)+x) ds$. We express a weak $t$-derivative of $\len_\rho((x,t,\ell))$ in terms of $A$ as
 \begin{equation} \label{eqn::lenderiv} \frac{\partial}{\partial t} \len_\rho((x,t,\ell)) =\frac{\partial}{\partial t} ( A(\sqrt{t}) t) = \frac12 t^{-1/2}  A'(\sqrt{t})t +  A(\sqrt{t})= \frac12  A'(\sqrt{t})\sqrt{t} +  A(\sqrt{t}). \end{equation} 
Since
\begin{align}
 A'(r) 
&= \int_0^1 \ell(s) \cdot (\nabla e^{\rho})(r \ell(s)+x) ds \label{eqn-dA-first}\\
&= \int_0^1 \ell(s) \cdot ((\nabla e^{\rho})(r \ell(s)+x) - (\nabla e^{\rho})(x) ) ds, \label{eqn-dA-second}
\end{align}
we can bound $|A'(\sqrt{t})|$ from above by
\begin{equation*}\label{eqn-dA-bound-all}
\diam(\ell) \sup_{h\in\mathbb R^2} \abs{(\nabla e^{\rho})(x+h)}
\leq \frac23\wt{\Lambda}\,\diam(\ell)
\end{equation*}
for almost every $t>0$, using \eqref{eqn-dA-first}. On the other hand, if $x\in D\setminus \smallset$, we use \eqref{eqn-smallset} and \eqref{eqn-dA-second} to bound $|A'(\sqrt{t})|$ from above by
\begin{equation*}\label{eqn-dA-bound-most}
\diam(\ell) \esssup_{|h| \leq \diam(\ell)\sqrt{t}} \abs{(\nabla e^{\rho})(x+h)-\nabla e^{\rho}(x)}
\leq \frac23\ep \,\diam(\ell)
\end{equation*}
for almost every $t>0$ with $\diam(\ell)\sqrt{t}<\smalldiam$.

Note that \eqref{eqn::lenderiv} gives
\begin{equation*} \label{eqn::derivativebound}
\left| \frac{\partial}{\partial t} \len_\rho((x,t,\ell)) 
- A(\sqrt{t}) \right| = \frac12\abs{A'(\sqrt{t})}\sqrt{t},
\end{equation*}
and we also have
\begin{equation}
    \label{eqn-A-estimate}
|A(\sqrt{t}) - e^{\rho(x)}|=|A(\sqrt{t}) - A(0)| \leq \int_0^{\sqrt{t}} |A'(s)| ds \le \sup_{0\le s\le \sqrt{t}}\abs{A'(s)}\sqrt{t}.
\end{equation}
Combining these two inequalities with the estimates for $\abs{A'(\sqrt{t})}$ in two different cases gives the desired result.
\end{proof}

We use Proposition~\ref{prop-derivative} to show that, just as for center $\rho$-length, there is a \emph{unique} value $\alpha$ of $\len(L)$ corresponding to $\len_\rho(L) = \delta$.

\begin{prop}
\label{prop-alpha}
We can choose $\smallDiam>0$ such that the following holds. 
For each $\rho \in \mathcal{B}$ and $x\in D$, if $L = (x,\delta,\ell)$ is a loop with
$\diam(\ell)\sqrt{\delta}$ less than $\smallDiam$, then there is a unique positive $\alpha = \alpha(x, \delta, \ell,\rho)$ such that $\len_\rho((x,t,\ell)) = \delta$ iff $t=\alpha$ and $\len_\rho((x,t,\ell)) > \delta$ iff $t >\alpha$.  (If
$\diam(\ell)\sqrt{\delta}$ is $\geq \smallDiam$, then we arbitrarily set $\alpha = \delta$, so that $\alpha$ is defined for every loop $L=(x,\delta,\ell)$.)
\end{prop}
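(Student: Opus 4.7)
The plan is to show that, once $\diam(\ell)\sqrt{\delta}$ is sufficiently small, the map $t \mapsto \len_\rho((x,t,\ell))$ is continuous, takes all values in $(0,\infty)$, and is strictly monotonic on the only range where a preimage of $\delta$ could lie. Existence and uniqueness of $\alpha$ will then follow from the intermediate value theorem together with strict monotonicity.

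First I would use Proposition~\ref{prop-metric-ratio} to confine the search for $\alpha$ to the bounded interval $[\delta/\CC,\delta\CC]$. Indeed, if $\len_\rho((x,t,\ell)) = \delta$ then necessarily $t \in [\delta/\CC,\delta\CC]$, while conversely the same proposition gives $\len_\rho((x,\delta/\CC,\ell)) \leq \delta \leq \len_\rho((x,\delta\CC,\ell))$, so continuity of $\len_\rho((x,t,\ell))$ in $t$ already produces existence. For $t$ outside $[\delta/\CC,\delta\CC]$, Proposition~\ref{prop-metric-ratio} forces $\len_\rho((x,t,\ell))$ to be either strictly less than or strictly greater than $\delta$, so uniqueness of $\alpha$ only needs to be checked inside this interval. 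Inside it, one has $\diam(\ell)\sqrt{t} \leq \sqrt{\CC}\,\diam(\ell)\sqrt{\delta}$, which can be made uniformly small provided we choose $\smallDiam$ small enough.

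Next I would invoke the second (universal) statement of Proposition~\ref{prop-derivative}, applied with $\ep=\wt\CC$ and $\smallset=\emptyset$, which yields
\[ \Bigl| \frac{\partial}{\partial t} \len_\rho((x,t,\ell)) - e^{\rho(x)} \Bigr| \leq \wt\CC\,\diam(\ell)\sqrt{t} \]
for almost every $t>0$, uniformly over all $x \in D$ and $\rho \in \mathcal B$. Combined with the positive uniform lower bound $\inf_{\rho\in\mathcal B,\,x\in D} e^{\rho(x)} \geq e^{-M}$ guaranteed by Remark~\ref{rmk-boundedness}, I would choose $\smallDiam$ small enough that $\wt\CC\sqrt{\CC}\,\smallDiam < e^{-M}/2$. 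Then for almost every $t \in [\delta/\CC,\delta\CC]$ the derivative bound delivers $\partial_t \len_\rho((x,t,\ell)) \geq \tfrac12 e^{\rho(x)} > 0$.

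Finally, the representation $\len_\rho((x,t,\ell)) = tA(\sqrt{t})$ from the proof of Proposition~\ref{prop-derivative} together with the Lipschitz regularity of $\rho$ shows that $t \mapsto \len_\rho((x,t,\ell))$ is absolutely continuous, so almost-everywhere positivity of the weak derivative upgrades to strict monotonicity on $[\delta/\CC,\delta\CC]$, yielding uniqueness of $\alpha$. The main obstacle is precisely ensuring that the derivative estimate of Proposition~\ref{prop-derivative} applies on the \emph{entire} interval in which $\alpha$ might live rather than only at $t = \delta$; this is handled by the simultaneous choice of $\smallDiam$, $\CC$, and the uniform lower bound on $e^\rho$ described above.
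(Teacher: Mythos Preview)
Your proof is correct and follows essentially the same route as the paper's: use Proposition~\ref{prop-metric-ratio} to localize the potential value of $\alpha$ to an interval of order $\delta$, then apply the universal case of Proposition~\ref{prop-derivative} (with $\ep=\wt\CC$, $\smallset=\emptyset$) together with the uniform lower bound on $e^{\rho}$ to force the $t$-derivative of $\len_\rho$ to be strictly positive there. Your explicit remark that absolute continuity of $t\mapsto tA(\sqrt{t})$ is what upgrades an a.e.\ positive derivative to strict monotonicity is a welcome clarification that the paper's proof leaves implicit.
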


\begin{proof}
Let $\CC$ be the constant in Proposition~\ref{prop-metric-ratio}. If $t \geq \CC \delta$, then $\len_\rho((x,t,\ell))\geq \CC^{-1}t \geq \delta$ by Proposition~\ref{prop-metric-ratio}.  Thus, it suffices to show that, for
$\diam(\ell)\sqrt{\delta}$ sufficiently small, the function $\len_\rho((x,t,\ell))$ is strictly increasing in $t$ when $t \in (0,\CC \delta)$.  We  prove this fact by analyzing $\frac{\partial}{\partial t} \len_\rho((x,t,\ell))$ using Proposition~\ref{prop-derivative}.

Let $\wt\Lambda$ be the constant in Proposition~\ref{prop-derivative} so that for all $\rho \in \mathcal B$, $x \in D, \ell\in \mathcal L$, and almost every $t>0$,
\begin{equation}
    \label{eqn-derivative-bound}
\left| \frac{\partial}{\partial t} \len_\rho((x,t,\ell)) - e^{\rho(x)} \right| \leq \wt\Lambda\, \diam(\ell)\sqrt{t}.
\end{equation}
We choose
$\diam(\ell)\sqrt{\delta}$  sufficiently small less than $\smallDiam>0$ such that, for almost every $t \leq \CC \delta$, the bound
$\wt\Lambda\, \diam(\ell) \sqrt{t}$ in~\eqref{eqn-derivative-bound} is less than $\frac{1}{2} e^{\rho(x)}$.  This means $\frac{\partial}{\partial t} \len_\rho((x,t,\ell))$ is strictly positive for almost every $t \leq \CC \delta$. Since $\len_\rho((x,t,\ell))$ is a continous function in $t$, we conclude that $\len_\rho((x,t,\ell))$ is strictly increasing on $(0,\Lambda \delta)$.
\end{proof}

When $x$ and $\ell$ are fixed, $\alpha$ gives the Euclidean $t$ value of $\len$ that corresponds to a $\delta$ value of $\len_\rho$. We obtain the following analogue of Proposition~\ref{prop-clenarea} with $\len_\rho$ in place of $\clen_\rho$.
 
\begin{prop}
\label{prop-integral}
The $\mu$-mass of the set of loops $L$ with center in $D$ and $\len_\rho(L) \geq \delta$ is equal to 
 \begin{equation} \label{eqn::alphainverse}
 \int_D \int_{\mcl L} \int_{\alpha}^\infty \frac{1}{t^2}\, dt \, d\ell \, dx = \int_D \int_{\mcl L} \alpha^{-1} \, d\ell \, dx \end{equation}
 plus an error term that tends to $0$ as $\delta \rightarrow 0$ at a rate that is uniform in $\rho \in \mathcal B$.
\end{prop}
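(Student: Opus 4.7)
The plan is to substitute the definition of $\mu$ from Definition~\ref{defn-blm-gen} into the statement and then split the integral over $\mcl L$ according to whether $\diam(\ell)\sqrt{\delta}$ is less than the constant $\smallDiam$ from Proposition~\ref{prop-alpha}. By Fubini, writing $M(\delta)$ for the $\mu$-mass of loops with center in $D$ and $\len_\rho(L)\ge\delta$,
\[
M(\delta) \;=\; \int_D \int_{\mcl L}\int_0^\infty \mathbf{1}_{\{\len_\rho((x,t,\ell))\ge\delta\}}\,\frac{dt}{t^2}\,d\ell\,dx,
\]
and the right-hand side of~\eqref{eqn::alphainverse} can similarly be written as $\int_D\int_{\mcl L}\int_{\alpha}^\infty t^{-2}\,dt\,d\ell\,dx$.

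On the \emph{good} set $\mcl L_{\text{good}}:=\{\ell:\diam(\ell)\sqrt{\delta}<\smallDiam\}$, Proposition~\ref{prop-alpha} says precisely that $\{t>0:\len_\rho((x,t,\ell))\ge\delta\}=[\alpha,\infty)$, so the two innermost integrals agree exactly and the contributions of $\mcl L_{\text{good}}$ to the two sides cancel. Consequently, the entire error comes from the \emph{bad} set $\mcl L_{\text{bad}}:=\{\ell:\diam(\ell)\sqrt{\delta}\ge \smallDiam\}$, where $\alpha$ is set to $\delta$ by convention (giving $\int_{\alpha}^\infty t^{-2}dt=\delta^{-1}$), while Proposition~\ref{prop-metric-ratio} implies $\len_\rho\ge\delta\Rightarrow t\ge\delta/\CC$ and hence $\int_0^\infty \mathbf{1}_{\{\len_\rho\ge\delta\}}t^{-2}dt\le \CC/\delta$. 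The per-loop error is therefore at most $(\CC+1)/\delta$, and integrating over $x\in D$ and $\ell\in \mcl L_{\text{bad}}$ yields
\[
\bigl|M(\delta)-\textstyle\int_D\int_{\mcl L}\alpha^{-1}\,d\ell\,dx\bigr|
\;\le\; \area(D)\,\frac{\CC+1}{\delta}\,d\ell\bigl\{\diam(\ell)\ge \smallDiam/\sqrt{\delta}\bigr\}.
\]

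Note that this bound does not involve $\rho$ at all, so uniformity in $\rho\in\mathcal B$ is automatic once the bound is shown to be $o(1)$. What is required is the diameter tail estimate $d\ell\{\diam(\ell)\ge R\}=o(R^{-2})$ as $R\to\infty$; substituting $R=\smallDiam/\sqrt{\delta}$ then gives the desired $o(1)$. I expect this tail bound to be the main obstacle: it has to be extracted from the Schwartz-distribution assumption on the expected occupation measure in Definition~\ref{defn-blm-gen}. The Schwartz hypothesis yields all polynomial moments of a typical point $\ell(s)$, and combined with the rotational invariance of $d\ell$ and a Chebyshev-type argument one obtains tail decay of the diameter faster than any polynomial—enough to beat the $\delta^{-1}$ prefactor. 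For the Brownian loop measure this step is immediate from the Gaussian tail of the maximum of a Brownian bridge, so the argument recovers the case needed for Theorem~\ref{thm::loopconvthm} without any additional work.
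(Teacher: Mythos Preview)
Your approach is correct and essentially matches the paper's: both split $\mcl L$ according to whether $\diam(\ell)\sqrt{\delta}<\smallDiam$, observe the good contributions cancel exactly by Proposition~\ref{prop-alpha}, and bound the bad contribution by $\area(D)$ times a constant over $\delta$ times the $d\ell$-measure of loops with $\diam(\ell)\ge \smallDiam/\sqrt\delta$. The paper isolates the diameter tail estimate you flag as the ``main obstacle'' into a separate Proposition~\ref{prop-decay-2}, proved by exactly the Markov-on-occupation-time argument you sketch, so there is no gap.
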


The reason the mass of loops does not exactly equal \eqref{eqn::alphainverse} is that $\alpha$ is improperly defined when $\diam(\ell)\sqrt{\delta} \ge \smallDiam$, with $\smallDiam$ as in Proposition~\ref{prop-alpha}. Proposition~\ref{prop-integral} asserts that the resulting error is negligible in the $\delta \to 0$ limit. 

To prove Proposition~\ref{prop-integral}, we use the following bound on the $d\ell$-measure of loops with large diameter.

\begin{prop}
\label{prop-decay-2}
The $d\ell$-measure of loops $\ell$ with diameter greater than $K$ tends to $0$ as $K \rightarrow \infty$ faster than any negative power of $K$.
\end{prop}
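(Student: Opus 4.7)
The plan is to reduce the event $\{\diam(\ell) > K\}$ to a statement about the supremum $\sup_{s\in[0,1]} |\ell(s)|$ and then exploit the super-polynomial decay of the expected occupation measure $\theta$ at infinity. For the geometric reduction, if $p,q$ are two points on $\ell$ with $|p-q|>K$, the triangle inequality forces $\max(|p|,|q|)>K/2$. Hence $\{\diam(\ell)>K\} \subset \{\sup_s |\ell(s)| > K/2\}$, and it suffices to bound the $d\ell$-measure of the latter event (the zero-centered hypothesis is not even needed for this step).

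Since $d\ell$ is rotationally invariant, the marginal law $\theta_s$ of $\ell(s)$ is rotationally invariant for each $s$ and satisfies $\int_0^1 \theta_s\,ds = \theta$. By the Schwartz hypothesis in Definition~\ref{defn-blm-gen}, $\int_{|x|>R}\theta(x)\,dx$ decays faster than any negative power of $R$. For the Brownian loop measure, which is the case of primary interest, $\ell$ is a mean-zero Brownian bridge on $[0,1]$, so $\sup_s|\ell(s)|$ has Gaussian tails by Doob's maximal inequality, giving even exponential decay and settling the proposition immediately. For a general generalized loop measure, I would discretize the time interval at a polynomial-in-$K$ scale $M=M(K)$, apply a union bound combined with Markov's inequality to obtain $\sum_i \theta_{s_i}(\{|x|>K/2\}) \leq M \int_{|x|>K/2}\theta(x)\,dx$, and control the discretization error using a modulus-of-continuity estimate on $\ell$.

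The main obstacle in full generality is this last modulus-of-continuity step: the Schwartz condition on $\theta$ controls the averaged behavior of the loop but not the pointwise regularity of individual paths. In the Brownian setting this is automatic from standard Gaussian estimates (giving the modulus $|s-s'|^{1/2-o(1)}$); for other measures satisfying Definition~\ref{defn-blm-gen} one would invoke additional structural properties of $d\ell$ to close this gap. Once the modulus is in hand, combining the discretization bound with the Schwartz decay of $\int_{|x|>K/4}\theta(x)\,dx$ yields the desired faster-than-polynomial decay of $d\ell(\diam(\ell)>K)$.
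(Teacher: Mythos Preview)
Your reduction $\{\diam(\ell)>K\}\subset\{\sup_s|\ell(s)|>K/2\}$ and the treatment of the Brownian case are correct. The gap you flag in the general case is genuine and, as you suspect, cannot be closed from the hypotheses of Definition~\ref{defn-blm-gen} alone: the Schwartz condition is imposed only on the \emph{expected} occupation density $\theta$ and carries no information about the pathwise modulus of continuity of $\ell$. A measure $d\ell$ supported on loops that make arbitrarily fast excursions would defeat any time-discretization at mesh $M(K)^{-1}$, so your union-bound scheme has nothing to anchor to.

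The paper's argument is different and avoids pointwise regularity entirely. Rather than controlling $\sup_s|\ell(s)|$, it works with the occupation time
\[
T\ :=\ \int_0^1 \mathbf 1_{|\ell(s)|>K/3}\,ds,
\]
arguing that on the event $\{\diam(\ell)>K\}$ one has a lower bound $T\ge c$, and then applying Markov's inequality:
\[
d\ell\{\diam(\ell)>K\}\ \le\ d\ell\{T\ge c\}\ \le\ c^{-1}\,\mathbb E[T]\ =\ c^{-1}\!\int_{|z|>K/3}\theta(z)\,dz,
\]
which decays faster than any power of $K$ by the Schwartz hypothesis. The key point is that $\mathbb E[T]$ is \emph{exactly} the tail mass of $\theta$, so the Schwartz condition is invoked directly on an integrated quantity; no discretization and no continuity estimate appear. (The paper states the lower bound as $T>cK$, which cannot be literally right since $T\le 1$; what the Markov step actually needs is any lower bound on $T$ that decays at most polynomially in $K$, and the paper's justification of that step is terse.) If you want to salvage your own approach, the natural fix is to replace the discretized supremum by the occupation time and use Markov's inequality in place of the union bound.
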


\begin{proof}
If $\ell$ has diameter $>K$, then Proposition~\ref{prop-metric-ratio} implies that $\int_0^{\len(\ell)} \mathbf{1}_{|\ell(s)| > K/3} dt > cK$ for some constant $c>0$. It follows from Markov's inequality that the probability of this event is bounded from above by $(cK)^{-1} \BB{E}(\int_0^{\len(\ell)} \mathbf{1}_{|\ell(s)| > K/3} dt)$.  By the Schwartz condition in Definition~\ref{defn-blm-gen}, the latter decays as $K \rightarrow \infty$ faster than any negative power of $K$.
\end{proof}

\begin{proof}[{Proof of Proposition~\ref{prop-integral}}]
By definition of $\alpha$, the integral in~\eqref{eqn::alphainverse} gives the $\mu$-mass of loops $L$ with $\cen(L) \in D$ and either
\begin{itemize}
    \item $\diam(\ell)\sqrt{\delta} < \smallDiam$ and $\len_\rho(L) \geq \delta $, or
    \item  $\diam(\ell)\sqrt{\delta} \geq \smallDiam$ and $t\geq \delta$.
\end{itemize}
Thus, the error term---i.e., the difference between~\eqref{eqn::alphainverse} and the mass of loops we consider in the proposition statement---is the $\mu$-mass of loops $L$ with $\diam(\ell)\sqrt{\delta} \geq \smallDiam$ and for which either 
\begin{itemize} 
\item $t \geq \delta$ and $\len_\rho(L) < \delta$, or
\item $t < \delta$ and $\len_\rho(L) \geq \delta$.  
\end{itemize}
To bound this error, we recall from Proposition~\ref{prop-metric-ratio} that $\len_\rho(L)$ is $>\delta$ when $t>\CC\delta$ and $<\delta$ when $t<\CC^{-1} \delta$.  Thus, the error is at most the $\mu$-mass of loops $L$ with center in $D$,  $\diam(\ell)\sqrt{\delta} \geq \smallDiam$ and $\CC^{-1} \delta \leq t \leq \CC \delta$.  This mass is equal to the Lebesgue measure of $D$ times $(\CC-\CC^{-1})/\delta$ times the $d\ell$-measure of the set of loops $\ell$ with $\diam(\ell)\sqrt{\delta} > \smallDiam$.    From Proposition~\ref{prop-decay-2}, we deduce that the error tends to $0$ as $\delta \rightarrow 
0$ at a rate that is uniform in $\rho \in \mathcal B$.
\end{proof}

Proposition~\ref{prop-integral} reduces the problem of proving Lemma~\ref{lem::convolveestimate} to the problem of analyzing the difference of integrands $\alpha^{-1}$ and $\cdelta^{-1}$ in~\eqref{eqn::alphainverse} and~\eqref{eqn::clenarea}.
We first derive an expression for $\alpha - \cdelta$ in terms of the difference in the $\rho$-length and center $\rho$-length of the loop $(x,\cdelta,\ell)$.

\begin{prop}
\label{prop-alpha-curve}
Let $\ep, K>0$ be fixed. Then, for all $\ell\in\mathcal L$ with diameter at most $K$, as $\delta\to 0$,
\begin{equation}
    \label{eqn-alpha-estimate}
\alpha - \cdelta = e^{-\rho(x)} \Bigl( \len_\rho(x, \cdelta, \ell) - \clen_\rho(x, \cdelta, \ell) \Bigr) + o(\delta^2),
\end{equation}
with the error converging uniformly in the choice of $\rho \in \mathcal B$, $\ell$ with diameter at most $K$, and $x \in D\setminus \smallset$ where $\{\smallset\}_{\rho\in D}$ is defined as in Proposition~\ref{prop-derivative}.  If we remove the restriction on $\diam(\ell)$, then the error is $O(\delta^2 \diam(\ell)^2)$ uniformly in $\rho \in \mcl B$, $x \in D\setminus\smallset$, and $\ell \in \mcl L$..
\end{prop}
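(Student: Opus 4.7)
The plan is to set $F(t) := \len_\rho((x,t,\ell))$ and apply the mean value theorem to $F$ on the interval between $\cdelta$ and $\alpha$, using the strict monotonicity established in Proposition~\ref{prop-alpha}. By definition of $\cdelta$ and $\alpha$, one has $F(\alpha) = \delta = e^{\rho(x)}\cdelta = \clen_\rho((x,\cdelta,\ell))$, so
\[ F(\alpha) - F(\cdelta) = \clen_\rho((x,\cdelta,\ell)) - \len_\rho((x,\cdelta,\ell)), \]
and the MVT produces a $\xi$ between $\alpha$ and $\cdelta$ with $\alpha - \cdelta = (\clen_\rho - \len_\rho)/F'(\xi)$. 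This reduces the task to estimating $F'(\xi)$ and the length discrepancy separately. (The sign of the bracketed quantity in the proposition statement appears to be a typo: the MVT identity as I wrote it gives $\clen_\rho - \len_\rho$, consistent with $\alpha < \cdelta$ whenever $\rho$ along the loop exceeds $\rho(x)$.)

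For the main estimate I would invoke Proposition~\ref{prop-derivative} twice. By Proposition~\ref{prop-metric-ratio}, $\alpha \leq \CC\cdelta$, so $\xi = O(\delta)$, and if $\delta$ is small enough that $K\sqrt{\CC\cdelta} < \smalldiam$ and $x \in D \setminus \smallset$, then Proposition~\ref{prop-derivative} yields $|F'(\xi) - e^{\rho(x)}| \leq \ep\diam(\ell)\sqrt{\xi}$, giving $1/F'(\xi) = e^{-\rho(x)} + O(\ep K \sqrt{\delta})$. Writing $\len_\rho((x,\cdelta,\ell)) = \cdelta A(\sqrt{\cdelta})$ with $A(r) = \int_0^1 e^{\rho(r\ell(s)+x)}\,ds$ as in the proof of Proposition~\ref{prop-derivative}, so that $\clen_\rho - \len_\rho = \cdelta(A(0) - A(\sqrt{\cdelta}))$, and applying~\eqref{eqn-A-estimate} together with the bound $|A'(r)| \leq \tfrac{2}{3}\ep\diam(\ell)$ established there, I obtain $|\clen_\rho - \len_\rho| = O(\ep K \delta^{3/2})$. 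Multiplying the two estimates produces
\[ \alpha - \cdelta = e^{-\rho(x)}(\clen_\rho - \len_\rho) + O(\ep^2 K^2 \delta^2), \]
which is the desired identity with an $O(\delta^2)$ error. For the unrestricted-diameter assertion one runs the same argument with the universal bound of Proposition~\ref{prop-derivative} ($\ep = \wt\Lambda$, $\smallset = \emptyset$), and the error becomes $O(\diam(\ell)^2 \delta^2)$ without any smallness condition on $\diam(\ell)$.

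The main obstacle is upgrading the error from $O(\delta^2)$ to $o(\delta^2)$. The key observation is that the error bound has the form (derivative error)$\,\times\,$(length discrepancy) and thereby picks up a factor of $\ep$ from each, producing the $\ep^2$. The uniform equicontinuity of $\{\nabla e^\rho\}_{\rho \in \mathcal B}$ (Remark~\ref{rmk-uniform-equicontinuity}, via Proposition~\ref{prop-exp-lip}) supplies, for every $\ep' > 0$, a threshold $\smalldiam(\ep')$ below which the same argument runs with $\ep'$ in place of $\ep$ for all $x$ outside an exceptional set of measure at most $\ep'$. The technical care required is compatibility with the set $\smallset$ fixed in the statement: I would handle this by taking $\ep' \leq \ep$ (so $x \notin \smallset$ remains in force after possibly enlarging the exceptional set in a way controlled by $\ep$) and letting $\ep' \to 0$ as $\delta \to 0$, so the error becomes $(\ep')^2 K^2 \delta^2 = o(\delta^2)$ uniformly in $\rho \in \mathcal B$, $\ell$ with $\diam(\ell) \leq K$, and $x \in D \setminus \smallset$.
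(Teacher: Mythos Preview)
Your approach via the mean value theorem is essentially the paper's: the paper phrases the same argument geometrically (comparing the ``average slope'' of the red and blue curves in a figure, which is just the MVT in pictures), and the error analysis---bounding $|F'(\xi)-e^{\rho(x)}|$ and the length discrepancy separately via Proposition~\ref{prop-derivative} and~\eqref{eqn-A-estimate}, then multiplying to get $c^2\diam(\ell)^2\delta^2$---is identical. Your sign observation is correct (the paper works with unsigned distances in its picture), and your upgrade from $O(\delta^2)$ to $o(\delta^2)$ by shrinking $\ep'$ is exactly what the paper does when it says $c$ ``can be made arbitrarily small for $\diam(\ell)\sqrt{\delta}$ sufficiently small''; the paper is no more precise than you are about the compatibility of the resulting exceptional sets with the fixed $\smallset$.
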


\begin{figure}[t!]
\begin{center}
\includegraphics[width=.4\textwidth]{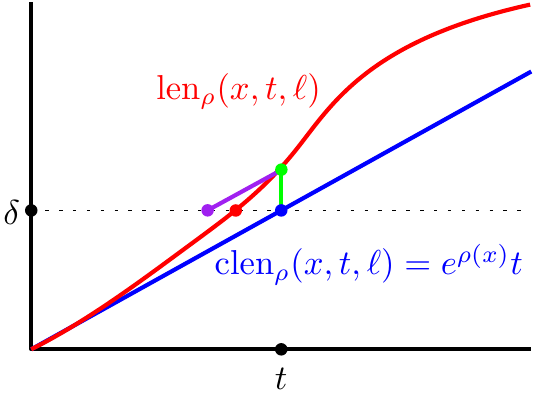}
    \caption{An illustration of the quantities we consider in 
    Proposition~\ref{prop-alpha-curve}.    
    With $\rho\in \mathcal B$, $x\in D\setminus \smallset$, and $\ell\in \mathcal L$ fixed, the blue curve is the graph of $t \mapsto \clen_\rho(L)$, and the red curve is the graph of $t \mapsto \len_\rho(L)$. The blue curve is a line with slope $e^{\rho(x)}$, and the red curve is differentiable with the same derivative $e^{\rho(x)}$ at the origin. Here, we consider $\delta>0$ for which $\diam(\ell) \sqrt{\delta} < \smallDiam$, with $\smallDiam$ as in Proposition~\ref{prop-alpha}. By Proposition~\ref{prop-alpha}, the red curve intersects the horizontal (dotted) line of height $\delta$ at the single red point $(\alpha,\delta)$.  The blue curve intersects the line of height $\delta$ at the blue point $(\beta,\delta)$.  The green point is the point on the red curve directly above the blue point, and the purple line segment is the segment parallel to the blue line from the green point to the dotted line (purple point).  Proposition~\ref{prop-alpha-curve} asserts that the distance between the blue and red points can be approximated by the distance between the blue and purple points.  The latter distance is simply the length of the green segment divided by the slope $e^{\rho(x)}$ of the blue line.} \label{fig::deltaversust}
\end{center}
\end{figure}

\begin{proof}
Let $\smallDiam>0$ be as in Proposition~\ref{prop-alpha}.  Observe that if we are restricting to $\ell$ with diameter at most some constant $K$,  we automatically have $\diam(\ell)\sqrt{\delta} < \smallDiam$ for uniformly small $\delta$.  If we do not impose the restriction  $\diam(\ell) \leq K$, then we could have $\diam(\ell)\sqrt{\delta} > \smallDiam$ for arbitrarily small $\delta$.  However, the proposition statement easily holds for this range of $\delta$ and $\ell$: by Proposition~\ref{prop-metric-ratio}, the error in the proposition statement must be bounded by a uniform constant times $\delta$, which is $O(\delta^2 \diam(\ell)^2)$  when $\diam(\ell)\sqrt{\delta} > \smallDiam$ because $\delta^2 \diam(\ell)^2 > \smallDiam^2 \delta$.  Thus, we may assume for the rest of the proof that $\diam(\ell) \sqrt{\delta} < \smallDiam$.

Throughout the proof, we refer to the graphs of the functions $t \mapsto \clen_\rho(L)$ and $t \mapsto \len_\rho(L)$ for fixed $\rho\in \mathcal B$, $x\in D\setminus \smallset$, and $\ell\in \mathcal L$ in Figure~\ref{fig::deltaversust}. See the caption of the figure for the definitions of the red, blue and green points.  By Proposition~\ref{prop-alpha},  $\alpha - \beta$ is the distance between the red and blue points, and the quantity $e^{-\rho(x)} \left( \len_\rho(x, \cdelta, \ell) -  \clen_\rho(x, \cdelta, \ell) \right)$ on the right-hand side of~\eqref{eqn-alpha-estimate} is the distance between the blue and purple points.  We can express these two distances in terms of the slopes of the two curves:
\begin{itemize}
    \item 
The distance between the blue and purple points is equal to the length of the green segment divided by the slope of the blue line (i.e., $e^{\rho(x)}$). 
\item
The distance between the blue and red points is equal to the length of the green segment divided by the average derivative of the red curve between the red and green points.
\end{itemize}
The error that we need to bound is the difference between these two distances---i.e., the distance between the red and purple points.  By Proposition~\ref{prop-derivative}, the derivative of the red curve between the red and green points differs from the derivative of the blue line by at most $c\, \diam(\ell) \sqrt{\delta}$, where $c$ is a constant bounded uniformly in $\rho \in \mcl B$ and $x\in D\setminus\smallset$ that can be made arbitrarily small for $\diam(\ell) \sqrt{\delta}$ sufficiently small.  By Proposition~\ref{prop-metric-ratio}, this implies that the \emph{inverses} of these two derivatives differ by a uniform constant multiplied by $c\, \diam(\ell) \sqrt{\delta}$.  Moreover,  the length of the green segment is $\len_\rho(x,\cdelta,\ell) - \delta$, and by~\eqref{eqn-A-estimate},
\begin{equation}
    \label{eqn-green-segment-bound}
\len_\rho(x,\cdelta,\ell) - \delta = \cdelta (A(\sqrt{\cdelta}) - e^{\rho(x)}) \leq c\,\diam(\ell)  \cdelta^{3/2},
\end{equation}
with $c$ as above.  Thus, the error term---i.e., the distance between the red and purple points---is bounded by a uniform constant times $c\, \diam(\ell)\sqrt{\delta} \cdot c\, \diam(\ell) \delta^{3/2}= c^2\delta^2\diam(\ell)^2$.  If we do not restrict to $\diam(\ell) \leq K$, the error is  $O(\delta^2 \diam(\ell)^2)$ uniformly in $\rho \in \mcl B$, $x \in D\setminus\smallset$, and $\ell \in \mcl L$.  If we  restrict to $\diam(\ell) \leq K$, then $c \rightarrow 0$ as $\delta \rightarrow 0$ at a uniform rate, so the error is $o(\delta^2)$ uniformly in $\rho \in \mcl B$, $x \in D\setminus\smallset$, and $\ell$ with $\diam(\ell)\le K$.
\end{proof}

Proposition~\ref{prop-alpha-curve} immediately yields the following expression for $\alpha^{-1} - \cdelta^{-1}$.

\begin{prop}
\label{prop-alpha-inverse}
Let $\ep, K>0$ be fixed. Then, for all $\ell$ with diameter at most $K$, as $\delta\to 0$,
\begin{equation}
    \label{eqn-alpha-inverse-estimate}
\alpha^{-1} - \cdelta^{-1} = - \delta^{-1} \cdelta^{-1} \Bigl( \len_\rho(x,\cdelta, \ell) - \clen_\rho(x,\cdelta, \ell) \Bigr) + o(1),
\end{equation}
with the error converging uniformly in the choice of $\rho \in \mathcal B$, $\ell$ with diameter at most $K$, and $x \in D\setminus \smallset$ where $\{\smallset\}_{\rho\in D}$ is defined as in Proposition~\ref{prop-derivative}.  If we remove the restriction on $\diam(\ell)$, then the error is $O(\diam(\ell)^2)$ + $o(\delta^2)$ uniformly in $\rho \in \mcl B$, $x \in D\setminus\smallset$, and $\ell \in \mcl L$.
\end{prop}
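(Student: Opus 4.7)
The plan is to deduce Proposition~\ref{prop-alpha-inverse} from Proposition~\ref{prop-alpha-curve} by a short algebraic manipulation together with a control of the denominator $\alpha\cdelta$. The starting point is the identity
\[
\alpha^{-1} - \cdelta^{-1} = -\frac{\alpha-\cdelta}{\alpha\cdelta}.
\]
Substituting the expression for $\alpha-\cdelta$ from Proposition~\ref{prop-alpha-curve} into the numerator and using $\cdelta = e^{-\rho(x)}\delta$ (so that $e^{-\rho(x)}/\cdelta^{2} = \delta^{-1}\cdelta^{-1}$) will give
\[
\alpha^{-1} - \cdelta^{-1} = -\,\delta^{-1}\cdelta^{-1}\bigl(\len_\rho(x,\cdelta,\ell) - \clen_\rho(x,\cdelta,\ell)\bigr)\cdot\frac{\cdelta}{\alpha} \;-\; \frac{o(\delta^{2})}{\alpha\cdelta},
\]
once we absorb the $e^{-\rho(x)}$ factor. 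I would then reduce the task to showing that each of the discrepancies between this and the desired formula is $o(1)$ uniformly.

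The second term is harmless: since $\cdelta$ is of order $\delta$ (with a uniform constant, by the uniform boundedness of $\rho$ from Remark~\ref{rmk-boundedness}), the denominator $\alpha\cdelta$ is comparable to $\delta^2$, and so $o(\delta^2)/(\alpha\cdelta) = o(1)$ uniformly. For the first term, the issue is that we have the factor $\cdelta/\alpha$ rather than $1$. I would estimate $\cdelta/\alpha - 1 = (\cdelta-\alpha)/\alpha$ using Proposition~\ref{prop-alpha-curve} together with the estimate $|\len_\rho - \clen_\rho| \le c\,\diam(\ell)\cdelta^{3/2}$ from \eqref{eqn-green-segment-bound}: in the restricted regime $\diam(\ell)\le K$ the constant $c$ may be taken to $0$ as $\delta\to 0$, giving $\cdelta/\alpha = 1 + o(\delta^{1/2})$, whereas in the unrestricted regime we only get $O(\diam(\ell)\delta^{1/2})$.

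Multiplying the relative error $\cdelta/\alpha - 1$ against $\delta^{-1}\cdelta^{-1}(\len_\rho-\clen_\rho)$, whose size is at most $O(\diam(\ell)\delta^{-1/2})\cdot c$, yields a term of size $O(\diam(\ell)^{2} c^2)$. In the bounded-diameter regime $c\to 0$ so this is $o(1)$, matching the first assertion of the proposition; in the unrestricted regime this is $O(\diam(\ell)^{2})$, matching the second assertion. The remaining $o(\delta^2)/(\alpha\cdelta)$ term accounts for the additional $o(\delta^2)$ listed in the unrestricted error bound (after noting that in the diameter-unrestricted case the $o(\delta^2)$ in Proposition~\ref{prop-alpha-curve} is replaced by $O(\delta^{2}\diam(\ell)^{2})$, which is already absorbed into the $O(\diam(\ell)^{2})$). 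The main obstacle here is not conceptual but bookkeeping: one must check that the Lipschitz/uniform boundedness conditions on $\mathcal B$ and the restriction $x\in D\setminus \smallset$ propagate cleanly through each estimate so that all constants are uniform in $\rho\in\mathcal B$.
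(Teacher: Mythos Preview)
Your proposal is correct and follows essentially the same approach as the paper. The only cosmetic difference is that the paper writes the identity via a Taylor expansion with Lagrange remainder, $\alpha^{-1}-\cdelta^{-1} = -\cdelta^{-2}(\alpha-\cdelta) + 2\gamma^{-3}(\alpha-\cdelta)^2$, and bounds the quadratic term directly, whereas you use the exact identity $\alpha^{-1}-\cdelta^{-1} = -(\alpha-\cdelta)/(\alpha\cdelta)$ and peel off the multiplicative correction $\cdelta/\alpha - 1$; unwinding either form gives the same $O(c^{2}\diam(\ell)^{2})$ second-order error coming from \eqref{eqn-green-segment-bound} and Proposition~\ref{prop-alpha-curve}. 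One small clarification: your sentence about the ``remaining $o(\delta^2)/(\alpha\cdelta)$ term'' accounting for the $o(\delta^2)$ in the unrestricted bound is slightly garbled, since in the unrestricted case Proposition~\ref{prop-alpha-curve} gives an $O(\delta^2\diam(\ell)^2)$ error (not $o(\delta^2)$), which after dividing by $\alpha\cdelta$ is $O(\diam(\ell)^2)$ and is absorbed as you say; the $o(\delta^2)$ in the proposition's stated bound is then simply superfluous, not something your argument needs to produce.
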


\begin{proof}
Throughout the following proof, each $O(\cdot)$ and $o(\cdot)$ error converges uniformly as $\delta \rightarrow 0$ in the choice of $\rho \in \mathcal B$, $x \in D\setminus\smallset$, and $\ell$.  

By the Taylor expansion of $f(r)=1/r$ at $\beta$, we have \[ \alpha^{-1}-\beta^{-1} = -\beta^{-2}(\alpha-\beta)+2\gamma^{-3}(\alpha-\beta)^2\] for some $\gamma$ between $\alpha$ and $\beta$.  
Proposition~\ref{prop-alpha-curve} gives
\[ -\beta^{-2}(\alpha-\beta) = - \delta^{-1} \cdelta^{-1} \Bigl( \len_\rho(x,\cdelta, \ell) - \clen_\rho(x,\cdelta, \ell) \Bigr) + o(1).\]
We now handle the $2\gamma^{-3}(\alpha-\beta)^2$ term. From~\eqref{eqn::metricratio}, we have $\CC^{-1}\le \alpha/\delta \le \CC$; therefore, $\gamma^{-3} = O(\delta^{-3})$. Next, by~\eqref{eqn-green-segment-bound} and Proposition~\ref{prop-alpha-curve},  $(\alpha-\beta)^2 = O(c^2\diam(\ell)^2\delta^3) + o(\delta^4)$, where $c$ is a constant bounded uniformly in $\rho \in \mcl B$ that can be made arbitrarily small for $\diam(\ell) \sqrt{\delta}$ sufficiently small.  
Hence, $\gamma^{-3}(\alpha-\beta)^2 = O(c^2\diam(\ell)^2) + o(\delta^2)$. The latter is $o(1)$ with the restriction on $\diam(\ell)$, and  $O(\diam(\ell)^2) + o(\delta^2)$ otherwise.
\end{proof}

We also give a similar estimate for bookkeeping which might be useful for Question~\ref{ques-trace}.

\begin{prop}
\label{prop-alpha-negative2}
With the same assumption of Proposition~\ref{prop-alpha-inverse} without restriction on $\diam(\ell)$, we have
$\alpha^{-2}-\beta^{-2} = O(c\,\diam(\ell)\delta^{-1/2})$ uniformly in $\rho \in \mcl B$, $x \in D\setminus\smallset$, and $\ell \in \mcl L$, where $c$ is a constant bounded uniformly in $\rho \in \mcl B$ that can be made arbitrarily small for $\diam(\ell) \sqrt{\delta}$ sufficiently small. 
\end{prop}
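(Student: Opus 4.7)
The plan is to adapt the Taylor-expansion strategy of Proposition~\ref{prop-alpha-inverse} to the function $r\mapsto r^{-2}$ in place of $r\mapsto r^{-1}$. Expanding around $r=\beta$ gives
\[
\alpha^{-2}-\beta^{-2} \;=\; -2\beta^{-3}(\alpha-\beta) \;+\; 3\gamma^{-4}(\alpha-\beta)^{2}
\]
for some $\gamma$ between $\alpha$ and $\beta$. Each of the three factors $\beta^{-3}$, $\gamma^{-4}$, and $\alpha-\beta$ has already been controlled in earlier propositions, so the task reduces to substituting these bounds and tracking constants.

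For the linear term I would invoke the unrestricted case of Proposition~\ref{prop-alpha-curve}, combined with the bound~\eqref{eqn-green-segment-bound} on the length discrepancy $\len_\rho(x,\cdelta,\ell)-\clen_\rho(x,\cdelta,\ell)$, to get $|\alpha-\beta|=O(c\,\diam(\ell)\,\delta^{3/2})$, with the $O$-constant uniform in $\rho\in\mathcal B$, $x\in D\setminus\smallset$, and $\ell\in\mathcal L$, and with $c$ inheriting the two key properties asserted in the statement (uniformly bounded in $\rho\in\mathcal B$, and arbitrarily small once $\diam(\ell)\sqrt{\delta}$ is sufficiently small). Proposition~\ref{prop-metric-ratio} gives $\beta=\Theta(\delta)$ and hence $\beta^{-3}=O(\delta^{-3})$ uniformly, producing the stated main contribution upon multiplication. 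For the quadratic remainder, the same input yields $(\alpha-\beta)^{2}=O(c^{2}\diam(\ell)^{2}\delta^{3})$, and since $\gamma$ lies between $\alpha$ and $\beta$ we also have $\gamma^{-4}=O(\delta^{-4})$; the product is subleading relative to the linear term.

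The step I would expect to be the main obstacle is not the Taylor expansion itself, which is purely algebraic, but rather the bookkeeping required to identify the constant $c$ in the final bound with the $c$ appearing in Proposition~\ref{prop-alpha-inverse} (and in~\eqref{eqn-green-segment-bound}), and to verify that the uniformity in $\rho\in\mathcal B$ is preserved through the substitution. These uniformity claims ultimately rest on Proposition~\ref{prop-exp-lip} (which upgrades the hypotheses on $\mathcal B$ to $\{e^\rho\}_{\rho\in\mathcal B}$) and on the definition of the excluded set $\smallset$ from Proposition~\ref{prop-derivative} (which is where the bound on $|A'|$ via~\eqref{eqn-dA-second} becomes small). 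Once those uniformities are in place, no new analytic input is needed beyond what is already assembled in Section~\ref{sec::firstlemma}.
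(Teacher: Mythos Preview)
Your second-order Taylor expansion introduces a quadratic remainder $3\gamma^{-4}(\alpha-\beta)^2$ which you estimate as $O(c^2\diam(\ell)^2\delta^{-1})$. This is \emph{not} subleading relative to the target $O(c\,\diam(\ell)\,\delta^{-1/2})$ once $\diam(\ell)$ is unrestricted: the ratio of the two is $c\,\diam(\ell)\,\delta^{-1/2}$, which is unbounded as $\diam(\ell)\to\infty$ for fixed $\delta$ (in that regime $c$ is merely uniformly bounded, not small). So the claim ``the product is subleading relative to the linear term'' fails precisely in the unrestricted setting the proposition addresses.

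The fix is to drop one order of the expansion. Applying the mean value theorem to $r\mapsto r^{-2}$ gives the \emph{exact} identity
\[
\alpha^{-2}-\beta^{-2} \;=\; -2\gamma^{-3}(\alpha-\beta)
\]
for some $\gamma$ between $\alpha$ and $\beta$, with no remainder. Now your own inputs suffice: $\gamma^{-3}=O(\delta^{-3})$ from~\eqref{eqn::metricratio}, and $\alpha-\beta=O(c\,\diam(\ell)\,\delta^{3/2})$ from Proposition~\ref{prop-alpha-curve} and~\eqref{eqn-green-segment-bound}, yielding the claimed $O(c\,\diam(\ell)\,\delta^{-1/2})$ directly. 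This is exactly the route the paper takes; the rest of your bookkeeping on the constant $c$ and on uniformity in $\rho\in\mathcal B$ is correct.
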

\begin{proof}
By the Taylor expansion of $f(r)=1/r$ at $\beta$, we have $ \alpha^{-2}-\beta^{-2} = -2\gamma^{-3}(\alpha-\beta)$ for some $\gamma$ between $\alpha$ and $\beta$.  
 From~\eqref{eqn::metricratio}, we have $\CC^{-1}\le \alpha/\delta \le \CC$; therefore, $\gamma^{-3} = O(\delta^{-3})$. Next, by~\eqref{eqn-green-segment-bound} and Proposition~\ref{prop-alpha-curve},  $\alpha-\beta = O(c\,\diam(\ell)\delta^{3/2})$, where $c$ is a constant bounded uniformly in $\rho \in \mcl B$ that can be made arbitrarily small for $\diam(\ell) \sqrt{\delta}$ sufficiently small.  
Hence, $\gamma^{-3}(\alpha-\beta) = O(c\,\diam(\ell)\delta^{-1/2})$. 
\end{proof}

\begin{proof}[{Proof of Lemma~\ref{lem::convolveestimate}}]
Let $\ep>0$ and $\{\smallset\}_{\rho\in D}$ be defined as in Proposition~\ref{prop-derivative} so that $\Vol(\smallset)\le \ep$. By Propositions~\ref{prop-clenarea} and~\ref{prop-integral}, integrating $\alpha^{-1} - \beta^{-1}$ over $x \in D\setminus\smallset$ and $\ell \in \mcl L$ yields the $\mu$-mass of loops $L$ centered in $D$ with $\len_\rho(L) \geq \delta$ minus the $\mu$-mass of loops $L$ centered in $D$ with $\clen_\rho(L) \geq \delta$, up to a uniform $o(1)$ error. 
By Proposition~\ref{prop-alpha-inverse}, for each fixed $x \in D\setminus\smallset$ and $\ell \in \mcl L$, the integrand $\alpha^{-1} - \beta^{-1}$ is equal to 
\begin{equation}
    \label{eqn-alpha-inverse-terms}
- \delta^{-1}  \Bigl( \beta^{-1} \len_\rho(x, e^{-\rho(x)} \delta, \ell) - e^{\rho(x)} \Bigr) 
\end{equation}
plus an error term that has the following limiting behavior as $\delta \rightarrow 0$:
\begin{enumerate}[(a)]
    \item 
    \label{item-error-1}
The error is $O(\diam(\ell)^2) + o(\delta^2)$ uniformly in $\rho \in \mcl B$ and $(x,\ell) \in (D\setminus\smallset) \times \mcl L$.
    \item
    \label{item-error-2}
For each fixed $K>0$, the error is $o(1)$ uniformly in $\rho \in \mcl B$ and $(x,\ell) \in (D\setminus\smallset) \times \mcl L$ with $\diam(\ell) \leq K$.
\end{enumerate}
We now integrate $\alpha^{-1} - \beta^{-1}$ over $x \in D\setminus\smallset$ and $\ell \in \mcl L$ and take the $\delta \rightarrow 0$ limit. The integral of~\eqref{eqn-alpha-inverse-terms} is exactly equal to the term $ \frac{1}{\delta}  \mathbb E[e^{\rho(Z)}-e^{\rho(X)}]$ in~\eqref{eqn-convolve-estimate}, so we just need to show that the integral of the error term in the expression for $\alpha^{-1} - \beta^{-1}$ tends to $0$ as $\delta \rightarrow 0$ uniformly in $\rho \in \mcl B$.  

To analyze the integral of this error term, we partition the domain of integration. 
For any function $K(\delta)$ of $\delta>0$, we can partition $(D\setminus \smallset) \times \mcl L$ into two subdomains: the set of pairs $(x,\ell)$ with $\diam(\ell) \leq K(\delta)$, and the set of $(x,\ell)$ with $\diam(\ell) > K(\delta)$.  The bound~\eqref{item-error-1} implies that the integral of the error over $(x,\ell)$ with $\diam(\ell) > k$ equals a uniform constant times $\int_{\{\diam(\ell) > k\}} (\diam(\ell)^2 + \delta^2) d\ell$, which tends to zero as $k \rightarrow \infty$ at a rate uniform in (small) $\delta>0$ and $\rho \in \mcl B$.  Moreover,~\eqref{item-error-2} implies that if $k>0$ is fixed, the integral of the error over $(x,\ell)$ with $\diam(\ell) \leq k$ tends to $0$ as $\delta \rightarrow 0$ uniformly in $\rho \in \mcl B$.  Hence, if we choose a function $K(\delta)$ that tends to infinity sufficiently slowly as $\delta \rightarrow 0$, the integrals of the error term over both subdomains of $(D\setminus \smallset) \times \mcl L$  tend to $0$ as $\delta \rightarrow 0$ uniformly in $\rho \in \mcl B$.

Finally, the integral of $\alpha^{-1} - \beta^{-1}$ over $x\in\smallset$ and $\ell\in\mathcal L$ is bounded by a uniform constant times $\ep\delta$ because of~\eqref{eqn::metricratio}, which finishes the proof.
\end{proof}

\section{Expected length discrepancy vs.\ Dirichlet energy} \label{sec::secondlemma}

We now complete the proof of Theorem~\ref{thm::loopconvthm2} by proving Lemma~\ref{lem::convolveestimateconverges}.  The main step in proving this lemma is proving the following proposition. 
\begin{prop} 
\label{prop-mean-rho}
With $Z$ and $X$ defined as in Lemma~\ref{lem::convolveestimate}, the quantity $\cdelta^{-1} \mathbb E[\rho(Z) - \rho(X)]$ converges to $0$ as $\delta \rightarrow 0$ uniformly in $\rho \in \mcl B$. The expectation here is w.r.t.\ to the overall law of $X$ and $Z$ as defined in Lemma~\ref{lem::convolveestimate}.
\end{prop}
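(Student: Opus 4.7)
The plan is to write $\beta^{-1}\mathbb{E}[\rho(Z)-\rho(X)]$ as a spatial integral involving $\nabla\rho$, exploit two structural cancellations---the zero-centered property of $\ell$ and the compact support of $\rho$---and then control the resulting residue using the Fr\'echet--Kolmogorov uniform $L^1$ equi-continuity of $\{\nabla\rho\}_{\rho\in\mcl B}$ from Remark~\ref{rmk-uniform-equicontinuity}.

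First I would use the fundamental theorem of calculus along rays (valid almost everywhere by Rademacher's theorem) and Fubini to rewrite
\[
\beta^{-1}\mathbb{E}[\rho(Z)-\rho(X)] = \int_0^1 \int v\,\theta(v)\cdot\Bigl(\int_D \beta(x)^{-1/2}\nabla\rho\bigl(x+s\sqrt{\beta(x)}\,v\bigr)\,dx\Bigr)\,dv\,ds,
\]
where $V=\ell(U)$ has law $\theta(v)\,dv$ and $\beta(x)=e^{-\rho(x)}\delta$. The zero-centered property gives $\int v\,\theta(v)\,dv=0$, so I may subtract the value $J_0:=\int_D\beta(x)^{-1/2}\nabla\rho(x)\,dx$ of the inner integral at $v=0$ without changing the outer integral; and $J_0$ vanishes identically, since $\beta(x)^{-1/2}\nabla\rho(x)=2\delta^{-1/2}\nabla(e^{\rho/2}-1)$ and the divergence theorem applies to the compactly supported Lipschitz function $e^{\rho/2}-1$.

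Next I perform the near-identity change of variables $y=\phi(x):=x+s\sqrt{\beta(x)}\,v$. Because $\nabla(\phi-\mathrm{id})=-\tfrac{s\sqrt{\beta}}{2}v\otimes\nabla\rho$ is rank one, $\det D\phi = 1-\tfrac{s\sqrt{\beta}}{2}v\cdot\nabla\rho$ \emph{exactly}. A first-order expansion in $\eta(y):=y-\phi^{-1}(y)$, which satisfies $|\eta(y)|\le C\sqrt{\delta}\,|v|$ uniformly, shows that the composite factor $\beta(\phi^{-1}(y))^{-1/2}|\det D\phi|^{-1}$ telescopes against $\beta(y)^{-1/2}$ so that its first-order variation cancels exactly. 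Integrating the resulting residue against $v\,\theta(v)$ and using the Schwartz decay of $\theta$ from Definition~\ref{defn-blm-gen} together with dominated convergence produces
\[
\bigl|\beta^{-1}\mathbb{E}[\rho(Z)-\rho(X)]\bigr| \lesssim \int|v|^2\,\theta(v)\,\omega_{\mcl B}(C\sqrt{\delta}\,|v|)\,dv + O(\sqrt{\delta}),
\]
where $\omega_{\mcl B}(h):=\sup_{\rho\in\mcl B}\int_D|\nabla\rho(y+h)-\nabla\rho(y)|\,dy$ tends to $0$ as $|h|\to 0$ by the Fr\'echet--Kolmogorov condition.

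The main obstacle is that $\rho$ is merely Lipschitz, so $\nabla\rho$ has no pointwise modulus of continuity, and a direct second-order Taylor expansion of $\rho$ would produce a distributional $\Delta\rho$ term that cannot be controlled pointwise. Exploiting both structural cancellations simultaneously is essential: only then does the residue involve purely first-order increments of $\nabla\rho$, to which the uniform $L^1$ equi-continuity supplied by precompactness of $\mcl B$ applies. A small additional subtlety is that the effective shift $\eta(y)$ depends on $y$, but because $\sqrt{\beta(\cdot)}=\sqrt{\delta}\,e^{-\rho/2}$ has a Lipschitz bound uniform in $\rho\in\mcl B$, the variable-shift equi-continuity reduces to the fixed-shift version by a routine partition argument.
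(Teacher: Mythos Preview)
Your approach is genuinely different from the paper's. The paper writes the quantity as $-(\rho,\phi)_\nabla$ for an auxiliary potential $\phi$ solving $\Delta\phi(y)=\int_D\beta^{-1}\bigl(\theta(y-x,\beta)-\bm{\delta}(y-x)\bigr)\,dx$, then shows $\|\nabla\phi\|_{L^2}\to 0$ by constructing $\phi$ explicitly through Fourier inversion (the functions $\Theta_t$), rewriting $\nabla\phi$ as an iterated integral of the odd function $\nabla\Theta_1$, and exploiting oddness plus $W^{1,2}$ equicontinuity of $\rho$. Your route avoids Fourier analysis entirely, using the fundamental theorem of calculus, a change of variables $y=\phi(x)=x+s\sqrt{\beta(x)}\,v$, and a clean first-order cancellation between the Jacobian factor and the shift in $\beta^{-1/2}$. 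If it worked it would be more elementary and more transparent about where the equicontinuity hypothesis enters.

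There is, however, a genuine gap at the boundary of $D$ that your outline does not address. The expectation ranges only over $X\in D$, so your inner integral is $I(s,v)=\int_D\beta(x)^{-1/2}\nabla\rho(\phi(x))\,dx$. After the change of variables this becomes $\int_{\phi(D)}F(y)\nabla\rho(y)\,dy$, and when you compare $F(y)$ to $\beta(y)^{-1/2}$ and invoke $J_0=0$, what remains is not only your residue term but also the domain discrepancy
\[
-\int_{D\setminus\phi(D)}\beta(y)^{-1/2}\,\nabla\rho(y)\,dy.
\]
This set is a layer of width $\sim s\sqrt{\delta}\,|v|$ near $\partial D$, and on it $\beta(y)^{-1/2}\,|\nabla\rho(y)|\sim\delta^{-1/2}$ (since $\rho\in\Lip(D)$ allows $\nabla\rho$ to be nonzero right up to $\partial D$). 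Hence this boundary term is of size $\sim s|v|$, and after pairing with $v\,\theta(v)$ and integrating in $s$ it contributes a quantity of order $\int|v|^2\theta(v)\,dv=O(1)$, not $O(\sqrt{\delta})$. Concretely, for $D=B(0,1)$ and $\rho(x)=(1-|x|)_+$ one computes this contribution to be $-\tfrac{b}{4}\int_{\partial D}\partial_n\rho\,d\sigma$, which does not vanish. Your bulk cancellation is correct, but it does not touch this boundary effect; the $O(\sqrt{\delta})$ you write at the end cannot absorb it. To repair the argument you would need an additional mechanism that neutralises this boundary layer---for instance, first extending the $x$-integration from $D$ to all of $\mathbb{R}^2$ (where your change-of-variables argument does go through cleanly, since then $\phi(\mathbb{R}^2)=\mathbb{R}^2$) and then separately showing that the extra piece $\int_{D^c}\beta^{-1/2}\nabla\rho(\phi(x))\,dx$ contributes $o(1)$ after pairing with $v\,\theta(v)$; but that second step is itself nontrivial and is essentially the same boundary obstruction in disguise. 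The paper's Fourier/potential-theoretic route sidesteps this by never performing a change of variables across $\partial D$: the domain restriction is carried by the region of integration $\frac{y-D}{\sqrt{t}}$ in the explicit formula for $\nabla\phi$, and the boundary effect is then controlled in $L^2$ rather than pointwise.
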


We prove Proposition~\ref{prop-mean-rho} by a Fourier analysis argument.  Throughout this section, we define the Fourier transform of a function $\phi$ as  $\widehat{\phi}(\zeta) := \int_{\mathbb R^2} e^{i\zeta\cdot z}\phi(z)\,dz$ (using the convention of characteristic functions).  To prove Proposition~\ref{prop-mean-rho}, we express the conditional density of $Z$ given $X=x$ in terms of the expected occupation measure of a loop sampled from $\mu$ with given length and center.  In the following proposition, we introduce some notation for this  expected occupation measure and record some of its elementary properties.

\begin{prop}
\label{prop-defn-theta}
For $z \in \BB R^2$ and $s > 0$, let $\theta(z,s)$ be the expected occupation measure of a loop sampled from $\mu$ and conditioned to have length $s$ and center zero. For fixed $s$, the measure $\theta(z,s)$ is radially symmetric  Schwartz, and a probability measure. Each coordinate has second central moment $bs$. Moreover, $\theta$ satisfies the scaling relation $\theta(z/\sqrt{t},s) = t \theta(z, ts)$  for any $t\ge 0$.  Finally, we have  $\lim_{s\to 0}\widehat{\theta}(z,s)=1$.  
\end{prop}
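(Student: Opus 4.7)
The plan is to reduce every claim to the case $s=1$ via the scaling of the loop parametrization in Definition~\ref{defn-loop-space}. A loop $L$ of length $s$ centered at zero satisfies $L(s')=\sqrt{s}\,\ell(s'/s)$ for some $\ell\in\mathcal L$, so a point $Z$ sampled uniformly on $L$ (with respect to length) has the form $\sqrt{s}\,\ell(U)$ with $U\sim\mathrm{Unif}[0,1]$ independent of $\ell$. Writing $\theta_0(\cdot):=\theta(\cdot,1)$ for the probability density of $\ell(U)$, the density of $Z=\sqrt{s}\,\ell(U)$ is
\[
\theta(z,s) \;=\; s^{-1}\theta_0(z/\sqrt{s}).
\]
First I would establish this formula, after fixing the normalization so that $\theta(\cdot,s)$ is interpreted as the density of a uniformly sampled point (total mass $1$) rather than as the raw expected occupation measure (total mass $s$). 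The scaling relation $\theta(z/\sqrt{t},s)=t\,\theta(z,ts)$ then follows by direct substitution, since both sides equal $s^{-1}\theta_0(z/\sqrt{ts})$.

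With this explicit formula in hand, the remaining properties are inherited from $\theta_0$. Rotational invariance of $d\ell$ (built into Definition~\ref{defn-blm-gen}) makes the law of $\ell(U)$ rotationally invariant, so $\theta_0$ is radially symmetric, and the isotropic scaling $z\mapsto z/\sqrt{s}$ preserves this. The Schwartz property of $\theta_0$ is precisely the Schwartz-distribution hypothesis on the expected occupation measure in Definition~\ref{defn-blm-gen}, and dilation preserves Schwartz regularity, so $\theta(\cdot,s)$ is Schwartz as well. The probability-measure claim is then immediate from the identification of $\theta(\cdot,s)$ as a density. Finally, the second central moment of the first coordinate under $\theta(\cdot,s)$ equals, by change of variables, $s$ times the second moment of the corresponding coordinate under $\theta_0$, which is $bs$ by the definition of $b$ in Definition~\ref{defn-blm-gen}.

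For the Fourier limit I would compute, using the same change of variables,
\[
\widehat{\theta}(\zeta,s) \;=\; \int_{\BB R^2} e^{i\zeta\cdot z}\, s^{-1}\theta_0(z/\sqrt{s})\,dz \;=\; \int_{\BB R^2} e^{i\sqrt{s}\,\zeta\cdot u}\,\theta_0(u)\,du \;=\; \widehat{\theta}_0(\sqrt{s}\,\zeta).
\]
Since $\theta_0$ is an integrable probability density (in fact Schwartz), $\widehat{\theta}_0$ is continuous and $\widehat{\theta}_0(0)=\int\theta_0=1$. Hence $\widehat{\theta}(\zeta,s)=\widehat{\theta}_0(\sqrt{s}\,\zeta)\to 1$ as $s\to 0$ for every fixed $\zeta$, which is the required Fourier limit.

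I do not anticipate any serious obstacle: each claim reduces to an unconditional property of $\ell$ already packaged into Definition~\ref{defn-blm-gen}, and the scaling identity $\theta(z,s)=s^{-1}\theta_0(z/\sqrt{s})$ carries essentially all of the weight. The only mild subtlety is keeping the normalization convention straight---so I would state at the outset that $\theta(\cdot,s)$ denotes the density of the uniformly sampled point rather than the raw occupation measure---and being careful that ``Schwartz distribution'' in Definition~\ref{defn-blm-gen} is to be interpreted as a Schwartz function, which is needed for the continuity of $\widehat{\theta}_0$ at the origin.
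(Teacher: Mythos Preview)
Your proposal is correct and follows essentially the same approach as the paper: both reduce everything to the $s=1$ case via the scaling $L(\cdot)=\sqrt{s}\,\ell(\cdot/s)$ and then read off the claimed properties from Definition~\ref{defn-blm-gen}. The only minor difference is the Fourier limit---the paper argues that the second moments force weak convergence of $\theta(\cdot,s)$ to a point mass (hence characteristic functions tend to $1$), whereas you compute $\widehat{\theta}(\zeta,s)=\widehat{\theta}_0(\sqrt{s}\,\zeta)$ directly and invoke continuity of $\widehat{\theta}_0$ at the origin; both routes are short and valid.
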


\begin{proof}
Since $\theta(z,s)$ is the law of 
With $\ell$ sampled from $d\ell$, the loop $t \mapsto \sqrt{s} \ell(t/s)$ has law $\theta(z,s)$.  It follows from
Definition~\ref{defn-blm-gen} that  $\theta(z,s)$ is radially symmetric, Schwartz, and a probability measure with the desired second central moments. The scaling relation also follows immediately.  Finally, the second central moments imply that the measures $\theta(z,s)$ converge weakly as $s \rightarrow 0$ to a point mass at $0$, so their characteristic functions converge to $1$.
\end{proof}

We first reduce the task of proving Proposition~\ref{prop-mean-rho} to analyzing the Dirichlet energy of the inverse Laplacian of an appropriately chosen measure.

\begin{prop}
Suppose that $\phi:\mathbb R^2\to \mathbb R$ is a function satisfying \begin{equation}\label{eqn-phi-conditions}
    \Delta \phi(y) = \int_D \cdelta^{-1}(\theta(y-x,\cdelta)-\bm{\delta}(y-x))\,dx \qquad \text{and} \qquad \lim_{\abs{y} \to \infty} \phi(y) = 0,
\end{equation}
where $\theta$ is defined in Proposition~\ref{prop-defn-theta} and $\bm{\delta}$ is the point mass at $0$. Then
\[
\cdelta^{-1} \mathbb E[\rho(Z) - \rho(X)] \leq (\rho,\rho)_{\nabla}^2  (\phi,\phi)_{\nabla}^2 
\]
\end{prop}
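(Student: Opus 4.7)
The plan is to recognize the left-hand side as (a constant multiple of) the Dirichlet pairing $(\rho,\phi)_\nabla$, using the hypothesis on $\Delta\phi$ together with integration by parts, and then to finish with Cauchy--Schwarz. The defining equation \eqref{eqn-phi-conditions} has been engineered precisely so that the signed density on its right-hand side is what appears when one unfolds the joint law of $(X,Z)$; the work of the proposition is essentially all packaged into the $H^1$ norm of the potential $\phi$.

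First I would unpack the joint law of $(X,Z)$. Because the underlying loop is sampled from $\mu$ conditional on its center lying in $D$ and having length $\cdelta$, the center $X$ is uniform on $D$, and by Proposition~\ref{prop-defn-theta} the point $Z$ sampled uniformly by arclength along the loop has conditional density $\theta(\,\cdot\,-x,\cdelta)$ given $X=x$ (which is a probability density, so the conditioning is consistent). Unfolding,
\[
\mathbb E[\rho(Z)-\rho(X)] \;=\; \frac{1}{\Vol(D)}\int_{\BB R^2}\rho(y)\int_D\bigl(\theta(y-x,\cdelta)-\bm{\delta}(y-x)\bigr)\,dx\,dy,
\]
and plugging in \eqref{eqn-phi-conditions} gives
\[
\cdelta^{-1}\mathbb E[\rho(Z)-\rho(X)] \;=\; \frac{1}{\Vol(D)}\int_{\BB R^2}\rho(y)\,\Delta\phi(y)\,dy.
\]
Integration by parts then converts this to $-\Vol(D)^{-1}(\rho,\phi)_\nabla$, and Cauchy--Schwarz gives $|(\rho,\phi)_\nabla|\le(\rho,\rho)_\nabla^{1/2}(\phi,\phi)_\nabla^{1/2}$, which yields the stated bound (the $\Vol(D)^{-1}$ being absorbed into the constant).

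The main obstacle is justifying the integration by parts at the modest regularity available: $\rho$ is only Lipschitz, so $\nabla\rho$ exists only almost everywhere (by Rademacher), and $\phi$ is defined only implicitly as the decaying solution of a Poisson equation whose right-hand side is a bounded function minus the indicator $\cdelta^{-1}\mathbf 1_D$. Fortunately both issues are standard. The source $\cdelta^{-1}\int_D(\theta(\cdot-x,\cdelta)-\bm{\delta}(\cdot-x))\,dx$ has mean zero and is supported (up to rapidly decaying tails, using that $\theta$ is Schwartz by Proposition~\ref{prop-defn-theta}) near $D$; so $\phi$ behaves at infinity like the Newtonian potential of a zero-mean rapidly decaying source, and in particular $\nabla\phi=O(|y|^{-2})$. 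Combined with the hypothesis $\phi(y)\to 0$, this kills the boundary terms at infinity. Since $\rho$ vanishes outside $D$, there are no $\partial D$ boundary terms either. A routine mollification of $\rho$ together with its uniform $W^{1,\infty}$ bound (guaranteed by $\rho\in\mathcal B$ and Remark~\ref{rmk-boundedness}) then gives $\int\rho\,\Delta\phi=-\int\nabla\rho\cdot\nabla\phi=-(\rho,\phi)_\nabla$ in the limit.
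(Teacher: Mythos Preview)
Your approach is essentially identical to the paper's: unfold the joint law of $(X,Z)$ to recognize $\cdelta^{-1}\mathbb E[\rho(Z)-\rho(X)]$ as the $L^2$ pairing $(\rho,\Delta\phi)$, integrate by parts to obtain $-(\rho,\phi)_\nabla$, and finish with Cauchy--Schwarz. The paper's proof is a terse three lines doing exactly this; you supply additional justification (the decay of $\nabla\phi$ at infinity, Rademacher/mollification for the Lipschitz $\rho$) that the paper omits. One minor discrepancy worth flagging: neither your argument nor the paper's own proof actually produces the exponents in the displayed inequality---Cauchy--Schwarz gives $(\rho,\rho)_\nabla^{1/2}(\phi,\phi)_\nabla^{1/2}$, not $(\rho,\rho)_\nabla^2(\phi,\phi)_\nabla^2$---and the paper's proof likewise has no $\Vol(D)^{-1}$ normalization in the unfolding of $\mathbb E$, so both appear to be typographical artifacts in the statement rather than issues with your reasoning.
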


\begin{proof}
The conditional density of $Z$ given $X=x$ is $\theta(\cdot-x, \cdelta)$. Therefore,
    \begin{equation}\nonumber
        \cdelta^{-1} \mathbb E[\rho(Z) - \rho(X)]
        = \int_{D} \Bigl(\rho, \cdelta^{-1}\bigl(\theta(\cdot-x,\cdelta)-\bm{\delta}(\cdot-x)\bigr)\Bigr)\,dx
        = (\rho, \Delta \phi)
    \end{equation}
by the definition of $\phi$.  Integrating by parts (or applying Green's identity) gives $(\rho, \Delta \phi) = -(\rho,\phi)_\nabla$, and 
    \begin{equation*}
        (\rho,\phi)_\nabla^2\le (\rho,\rho)_\nabla (\phi,\phi)_\nabla.
    \end{equation*}
by applying the Cauchy-Schwarz inequality.
\end{proof}

Therefore, it is enough to show $(\phi,\phi)_{\nabla}\to 0$ uniformly for the proof. To express a function $\phi$ satisfying~\eqref{eqn-phi-conditions} using Fourier integral\footnote{Alternatively, such $\phi$ can be written in terms of the convolution with Green's function.}, we first define a family of auxiliary functions that we label $\Theta_t$ indexed by $t>0$.

\begin{prop}\label{prop::defofTheta}
    With the notation $\widehat{\theta_s}(\zeta, s_0):= \bigl.\frac{\partial\widehat{\theta}}{\partial s}(\zeta,s)\bigr|_{s=s_0}$, the function $\Theta_t$ for $t> 0$ defined as the inverse Fourier transform of $\abs{\zeta}^{-2}\widehat{\theta_s}(\zeta,t)$ is a radially symmetric Schwartz function. In addition, $\Theta_t(y)=\Theta_1(y/\sqrt{t})/t$ and $\widehat{\Theta_t}(\zeta)$ is uniformly bounded for all $t>0$ and $\zeta\in\mathbb R^2$. 
\end{prop}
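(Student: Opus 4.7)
The plan is to exploit the radial symmetry and the scaling relation from Proposition~\ref{prop-defn-theta} to rewrite $|\zeta|^{-2}\widehat{\theta_s}(\zeta, t)$ in a closed form that makes every claim transparent. Since $\theta(\cdot, 1)$ is a radially symmetric Schwartz function, its Fourier transform $\widehat{\theta}(\cdot, 1)$ is also radially symmetric and Schwartz. Using the standard fact that any smooth radial function on $\BB R^2$ may be written as a smooth function of the squared radius, I write $\widehat{\theta}(\zeta, 1) = F(|\zeta|^2)$ for some smooth $F:[0,\infty) \to \BB R$ with $F(0) = 1$. The scaling relation in Proposition~\ref{prop-defn-theta} translates, at the level of characteristic functions, into $\widehat{\theta}(\zeta, s) = \widehat{\theta}(\sqrt{s}\,\zeta, 1) = F(s|\zeta|^2)$.

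Differentiating in $s$ and evaluating at $s = t$ yields $\widehat{\theta_s}(\zeta, t) = |\zeta|^2\, F'(t|\zeta|^2)$, so the key identity is
\begin{equation*}
    |\zeta|^{-2}\, \widehat{\theta_s}(\zeta, t) \;=\; F'(t|\zeta|^2) \;=\; \widehat{\Theta_t}(\zeta).
\end{equation*}
The singularity of $|\zeta|^{-2}$ at the origin is cancelled exactly by the order-two vanishing of $\widehat{\theta_s}(\zeta, t)$ there, so $\widehat{\Theta_t}$ is globally smooth. Since $\widehat{\theta}(\cdot, 1)$ is Schwartz, $F$ and all of its derivatives decay faster than any polynomial at infinity; consequently $F'(t|\zeta|^2)$ together with all of its $\zeta$-derivatives decays faster than any polynomial in $|\zeta|$, so $\widehat{\Theta_t}$ is Schwartz. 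Taking inverse Fourier transforms shows $\Theta_t$ is Schwartz as well, and it is radially symmetric because $\widehat{\Theta_t}$ depends only on $|\zeta|$.

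The scaling relation $\Theta_t(y) = \Theta_1(y/\sqrt{t})/t$ follows immediately from $\widehat{\Theta_t}(\zeta) = F'(t|\zeta|^2) = \widehat{\Theta_1}(\sqrt{t}\,\zeta)$ together with the standard two-dimensional Fourier scaling rule. Finally, $F'$ is continuous on $[0,\infty)$ and decays to zero at infinity, so it is bounded on $[0,\infty)$; hence $|\widehat{\Theta_t}(\zeta)| \le \|F'\|_\infty$ uniformly in $t > 0$ and $\zeta \in \BB R^2$. The main obstacle is establishing genuine smoothness of $F$ at $0$, rather than merely on $(0,\infty)$: this reduces to the elementary fact that a smooth even function of one real variable is a smooth function of its square, applied to the radial profile of $\widehat{\theta}(\cdot, 1)$; alternatively, one can verify smoothness by expanding $\widehat{\theta}(\zeta, 1)$ directly in the moments of $\theta(\cdot, 1)$, which exist to all orders by the Schwartz assumption.
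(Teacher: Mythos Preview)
Your proof is correct and follows essentially the same route as the paper. Both arguments exploit the scaling relation to recognize that $\widehat{\Theta_t}(\zeta)$ depends only on $t|\zeta|^2$: the paper writes this as $\widehat{\theta_s}(1,t|\zeta|^2)$, you write it as $F'(t|\zeta|^2)$, and these are the same function since setting $|\zeta|=1$ in your identity $\widehat{\theta}(\zeta,s)=F(s|\zeta|^2)$ gives $F(\cdot)=\widehat{\theta}(1,\cdot)$. Your packaging via $F$ makes the cancellation of the $|\zeta|^{-2}$ singularity and the uniform bound slightly more transparent, but the content is the same.
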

\begin{proof}
    Fix $t>0$. Recall that $\widehat{\theta}(r,t)$ is Schwartz in $r\ge 0$, so we use the notation $\widehat{\theta_r}(r, t):= \bigl.\frac{\partial\widehat{\theta}}{\partial \abs{\zeta}}(\abs{\zeta},t)\Bigr|_{\abs{\zeta}=r}$ in this proof.
    The scaling relation and the radial symmetry of $\theta$ (Proposition~\ref{prop-defn-theta}) implies that $\widehat{\Theta_t}(\zeta)=\abs{\zeta}^{-2}\widehat{\theta_s}(\zeta,t) = \widehat{\theta_s}(1,t \abs{\zeta}^2)$. Hence, it is enough to show that $\widehat{\Theta_t}(r)=\widehat{\theta_s}(1,t r^2)$, as a function of $r\ge 0$, is a Schwartz function. This will also imply that $\widehat{\Theta_t}(\zeta) = \widehat{\Theta_1}(\sqrt{t}\zeta)$ is uniformly bounded by $\sup_{t\ge 0}\abs{\widehat{\Theta_1}(t)}$, and $\Theta_t(y)=\Theta_1(y/\sqrt{t})/t$ from the Fourier inversion.
    
    First we repeatedly differentiate both sides of the relation $\widehat{\theta}(1,tr^2) = \widehat{\theta}(r,t)$ using the chain rule, and observe $\partial_s^k\widehat{\theta}(1,tr^2)\mid_{r=0}$ exists as a constant multiple of $\partial_r^{2k}\widehat{\theta}(r,t)\mid_{r=0}$. Also, the first differentiation implies
    \begin{equation*}
        \widehat{\Theta_t}(r) = \widehat{\theta_s}(1,tr^2) = \frac{\widehat{\theta_r}(r,t)}{2tr}.
    \end{equation*}
    Let $P(r)$ be any polynomial in $r$, and $k$ be any nonnegative integer. On one hand, for a fixed $\varepsilon>0$, we have
    \begin{equation*}
        \sup_{r>\varepsilon} \abs{P(r)\frac{d^k}{dr^k}\frac{\widehat{\theta_r}(r,t)}{2tr}}<\infty.
    \end{equation*}
    because $\widehat{\theta_r}(r,t)$ and its derivatives with respect to $r$ are rapidly decreasing,
    On the other hand, note that $\abs{P(r)\frac{d^k}{dr^k}\widehat{\theta_s}(1,t r^2)}$ is continuous on $(0,\varepsilon]$ and bounded at 0. This proves that $\Theta_t$ is Schwartz.
\end{proof}

\begin{prop}
\label{prop-nabla-phi}
With $\Theta_t$ defined in Proposition~\ref{prop::defofTheta}, for $x\in \mathbb R^2$, let $\phi^x:\mathbb R^2\to \mathbb R$ be defined as
    \begin{equation}\nonumber
        \phi^x(y) := -\frac{1}{\delta}\int_0^\delta\Theta_{te^{-\rho(x)}}(y-x)\,dt.
    \end{equation}
    and $\phi:\mathbb R^2\to \mathbb R$ defined as
    \begin{equation}\label{eq::defofphi}
        \phi(y) := \int_D \phi^x(y)\, dx
    \end{equation}
for each $y\in \mathbb R^2$. Then $\phi$ satisfies~\eqref{eqn-phi-conditions}.
\end{prop}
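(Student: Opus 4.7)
The plan is to verify the claim in Fourier space, exploiting the fact that $\Theta_t$ was defined via its Fourier transform $\widehat{\Theta_t}(\zeta) = |\zeta|^{-2}\widehat{\theta_s}(\zeta,t)$. The identity $\Delta \phi(y) = \int_D \cdelta^{-1}(\theta(y-x,\cdelta)-\bm{\delta}(y-x))\,dx$ is a distributional equality, so it suffices to check it after Fourier transformation in $y$, where $\Delta$ becomes multiplication by $-|\zeta|^2$; by Proposition~\ref{prop::defofTheta}, $\widehat{\Theta_t}$ is uniformly bounded so Fubini applies to let me pull the Fourier transform through the defining integrals of $\phi^x$ and $\phi$.

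The main computation will go as follows. Taking the Fourier transform in $y$ of $\Theta_{te^{-\rho(x)}}(y-x)$ produces $e^{i\zeta\cdot x}\widehat{\Theta_{te^{-\rho(x)}}}(\zeta) = e^{i\zeta\cdot x}|\zeta|^{-2}\widehat{\theta_s}(\zeta,te^{-\rho(x)})$, so after substituting $u=te^{-\rho(x)}$ in the $t$-integral defining $\phi^x$ (sending $\delta$ to $\cdelta$ and $d t$ to $e^{\rho(x)}du$, and using $e^{\rho(x)}/\delta = 1/\cdelta$),
\begin{equation*}
\widehat{\phi^x}(\zeta) \;=\; -\frac{e^{i\zeta\cdot x}}{\cdelta |\zeta|^2}\int_0^{\cdelta}\widehat{\theta_s}(\zeta,u)\,du \;=\; -\frac{e^{i\zeta\cdot x}}{\cdelta|\zeta|^2}\bigl(\widehat{\theta}(\zeta,\cdelta)-1\bigr),
\end{equation*}
where the last step uses the fundamental theorem of calculus together with $\lim_{s\to 0}\widehat{\theta}(\zeta,s) = 1$ from Proposition~\ref{prop-defn-theta}. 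Multiplying by $-|\zeta|^2$ and recognizing $e^{i\zeta\cdot x}\widehat{\theta}(\zeta,\cdelta)$ and $e^{i\zeta\cdot x}$ as the Fourier transforms of $\theta(\cdot-x,\cdelta)$ and $\bm{\delta}(\cdot-x)$ respectively gives $\Delta \phi^x(y) = \cdelta^{-1}(\theta(y-x,\cdelta)-\bm{\delta}(y-x))$ as tempered distributions, and integrating over $x\in D$ (justified since $D$ is bounded and $\widehat{\Theta_t}$ is uniformly bounded) yields the first condition in~\eqref{eqn-phi-conditions}.

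For the decay condition $\lim_{|y|\to\infty}\phi(y)=0$, I will use that $\Theta_t$ is Schwartz by Proposition~\ref{prop::defofTheta}, together with the scaling $\Theta_t(z) = \Theta_1(z/\sqrt{t})/t$. Since $\rho\in\mathcal B$ is uniformly bounded and $D$ is bounded, the family $\{\Theta_{te^{-\rho(x)}}(y-x)\}_{x\in D,\,t\in(0,\delta]}$ is uniformly dominated by a Schwartz function of $y$ (up to a bounded factor from the $1/t$ prefactor after restricting to $t\in(0,\delta]$); more carefully one writes the tail bound $|\Theta_s(y-x)| \le C_N \langle (y-x)/\sqrt{s}\rangle^{-N}/s$ and checks that the resulting integral over $x\in D$ and $t\in(0,\delta]$ tends to $0$ as $|y|\to\infty$.

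The main obstacle I anticipate is not conceptual but bookkeeping: making the distributional manipulations (Fourier transform of $\bm{\delta}$, Fubini between the $\zeta$-Fourier integral and the $x$- and $t$-integrals, and interpreting $\Delta \phi = \theta(\cdot-x,\cdelta)/\cdelta - \bm\delta_x/\cdelta$) rigorous despite the point-mass appearing on the right-hand side. Since $\Theta_t$ is Schwartz but $\phi^x$ is merely bounded (its Laplacian has a delta spike), testing $\Delta\phi$ against Schwartz functions and using Plancherel—together with the uniform boundedness of $\widehat{\Theta_t}$—resolves this cleanly.
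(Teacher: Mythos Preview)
Your approach is essentially the same as the paper's: both compute $\widehat{\phi^x}$ by pushing the Fourier transform through the $t$-integral, use $\widehat{\Theta_t}(\zeta)=|\zeta|^{-2}\widehat{\theta_s}(\zeta,t)$, and integrate in $t$ via the fundamental theorem of calculus to land on $\widehat{\phi^x}(\zeta)=-e^{i\zeta\cdot x}|\zeta|^{-2}\cdelta^{-1}(\widehat\theta(\zeta,\cdelta)-1)$; the paper writes this down directly, while you make the substitution $u=te^{-\rho(x)}$ explicit, but the computations are identical.

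The only noticeable difference is in the decay condition. The paper argues that $\widehat{\phi^x}\in L^1$ and invokes Riemann--Lebesgue, then passes to $\phi$ by dominated convergence. You instead bound $|\Theta_s(y-x)|\le C_N\langle(y-x)/\sqrt s\rangle^{-N}/s$ directly from the scaling $\Theta_s=\Theta_1(\cdot/\sqrt s)/s$ and the Schwartz property of $\Theta_1$, and integrate this pointwise bound over $t\in(0,\delta]$ and $x\in D$. Your route is slightly more hands-on but perfectly valid (taking $N\ge 3$ makes the $t$-integral converge near $0$ and gives $O(|y|^{-N})$ decay); it has the modest advantage of sidestepping any question about whether $\widehat{\phi^x}$ is genuinely in $L^1(\mathbb R^2)$, since the final expression $|\zeta|^{-2}(\widehat\theta(\zeta,\cdelta)-1)$ behaves like $|\zeta|^{-2}$ at infinity, which is borderline in two dimensions.
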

\begin{proof}
    By the Fourier transform, we have
    \begin{align*}
        \widehat{\phi^x}(\zeta) &= -\frac{1}{\delta}\int_0^\delta e^{i\zeta\cdot x} \widehat{\Theta_{te^{-\rho(x)}}}(\zeta) \,dt \\
        &= -\frac{1}{\delta}\int_0^\delta e^{i\zeta\cdot x}\abs{\zeta}^{-2}\widehat{\theta_s}\bigl(\zeta,te^{-\rho(x)}\bigr)\,dt\\
        &= -e^{i\zeta\cdot x}\abs{\zeta}^{-2}\beta^{-1}\bigl(\widehat{\theta}(\zeta, \delta e^{-\rho(x)})-1\bigr),
    \end{align*}
    as we have defined $\beta = \delta e^{-\rho(x)}$.
    Therefore, the Fourier inversion of $\widehat{\Delta \phi^x} = -\abs{\zeta}^2\widehat{\phi^x}$ gives the desired result.
    Furthermore, the first identity shows that $\widehat{\phi^x}\in L^1$ from Proposition~\ref{prop::defofTheta}. By the Riemann-Lebesgue lemma, we conclude that $\lim_{\abs{y}\to \infty}\phi^x(y) = 0$, and thus it follows from the dominated convergence theorem that $\lim_{\abs{y}\to \infty}\phi(y) = 0$.
\end{proof}

\begin{prop}
With $\phi$ defined as~\eqref{eq::defofphi} and $\Theta_t$ defined in Proposition~\ref{prop::defofTheta},
define
    \begin{equation}\label{eq::defofG}
        G_{\rho,r}^y(z) := e^{3\rho(y-rz)/2} \nabla\Theta_1(z e^{\rho(y-rz)/2})
    \end{equation}
for each $\rho\in\mathcal B$, $y\in \mathbb R^2$, $r\in\mathbb R$, and $z\in\mathbb R^2$.
Then
    \begin{equation}\label{eq::defofgradphi}
        \nabla_y \phi(y) = \frac{1}{\delta} \int_0^\delta \frac{1}{\sqrt{t}}\int_{\frac{y-D}{\sqrt{t}}} G_{\rho,\sqrt{t}}^y(z)\,dz \,dt.
    \end{equation}
\end{prop}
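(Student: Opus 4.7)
The plan is a straightforward direct calculation: differentiate $\phi(y)=\int_D \phi^x(y)\,dx$ with respect to $y$ under both the $dx$ and the $dt$ integrals, then rewrite using the scaling property of $\Theta_t$ and a change of variables. The only substantive point to worry about is the justification of the interchanges; once those are in hand, it is essentially bookkeeping.

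First I would rewrite $\Theta_{te^{-\rho(x)}}(y-x)$ using the scaling relation from Proposition~\ref{prop::defofTheta}, namely $\Theta_t(w)=t^{-1}\Theta_1(w/\sqrt{t})$. With $s=te^{-\rho(x)}$ and $w=y-x$, this gives
\[
\Theta_{te^{-\rho(x)}}(y-x)=\frac{e^{\rho(x)}}{t}\Theta_{1}\!\left(\frac{(y-x)\,e^{\rho(x)/2}}{\sqrt{t}}\right),
\]
so $\phi^x(y)=-\frac{1}{\delta}\int_0^\delta \frac{e^{\rho(x)}}{t}\Theta_1\bigl(e^{\rho(x)/2}(y-x)/\sqrt{t}\bigr)\,dt$. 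Differentiating in $y$ (which I justify below) and using $\nabla_y[\Theta_1(e^{\rho(x)/2}(y-x)/\sqrt{t})]=\frac{e^{\rho(x)/2}}{\sqrt{t}}\nabla\Theta_1(\cdots)$ yields
\[
\nabla_y\phi^x(y)=-\frac{1}{\delta}\int_0^\delta \frac{e^{3\rho(x)/2}}{t^{3/2}}\nabla\Theta_1\!\left(\frac{(y-x)\,e^{\rho(x)/2}}{\sqrt{t}}\right)dt.
\]

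Next I would insert this into $\nabla_y\phi(y)=\int_D\nabla_y\phi^x(y)\,dx$ (another interchange to justify), swap the order of the $dx$ and $dt$ integrals by Fubini, and perform the substitution $z=(y-x)/\sqrt{t}$, so that $x=y-\sqrt{t}\,z$, $dx=t\,dz$, and the region $x\in D$ becomes $z\in (y-D)/\sqrt{t}$. Under this substitution $(y-x)e^{\rho(x)/2}/\sqrt{t}=z\,e^{\rho(y-\sqrt{t}z)/2}$ and $e^{3\rho(x)/2}=e^{3\rho(y-\sqrt{t}z)/2}$; the Jacobian cancels one power of $t^{1/2}$ in the denominator, and the integrand rearranges exactly into $G^y_{\rho,\sqrt{t}}(z)$ as defined in~\eqref{eq::defofG}, producing the claimed formula~\eqref{eq::defofgradphi} (up to the overall sign, which should be tracked carefully: the factor $-1$ in the definition of $\phi^x$ propagates through the calculation, and matching the sign in the statement amounts to the convention that $\nabla\Theta_1$ is odd under $z\mapsto -z$).

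The one point needing care is the justification of differentiation under the integral signs and of Fubini. For each fixed $x\in D$ and $t>0$, $\Theta_1$ is Schwartz (Proposition~\ref{prop::defofTheta}), so $y\mapsto \Theta_1(e^{\rho(x)/2}(y-x)/\sqrt{t})$ is smooth with all derivatives bounded, uniformly in $x$ because $\rho$ is uniformly bounded on $\mathcal B$ by Remark~\ref{rmk-boundedness}. Hence the pointwise derivative $\nabla_y$ can be pulled through both the $\int_0^\delta dt$ integral (where $t$ ranges over a bounded interval) and the $\int_D dx$ integral (where $D$ is bounded); a dominating function comes from $\sup_w|\nabla\Theta_1(w)|$ together with the pointwise bound $e^{3\rho(x)/2}/t^{3/2}$ on the domain of integration. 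The same bound, combined with the Schwartz decay of $\nabla\Theta_1$ in its argument $e^{\rho(x)/2}(y-x)/\sqrt{t}$, gives absolute integrability on $D\times(0,\delta)$ and thereby justifies Fubini. No step in the argument is delicate, but keeping track of the powers of $t$ and the factor $e^{\rho(x)/2}$ produced by each differentiation and by the Jacobian is the main source of potential error.
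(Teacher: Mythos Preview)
Your approach is essentially identical to the paper's: rewrite via the scaling relation $\Theta_t(w)=t^{-1}\Theta_1(w/\sqrt t)$, differentiate in $y$, then change variables $z=(y-x)/\sqrt t$. The paper's proof is a two-line sketch that omits the justifications you attempt, so your write-up is in fact more detailed.

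One small correction to your justification: the proposed dominating function $e^{3\rho(x)/2}t^{-3/2}\sup_w|\nabla\Theta_1(w)|$ is \emph{not} integrable in $t$ near $0$, so it cannot by itself justify differentiation under the $dt$-integral. The fix is to first perform the $x$-integral (equivalently, the substitution $z=(y-x)/\sqrt t$): using the Schwartz decay of $\nabla\Theta_1$ and the uniform bounds on $\rho$, the resulting $z$-integral is bounded independently of $t$, leaving only a $t^{-1/2}$ factor, which is integrable on $(0,\delta)$. You essentially gesture at this in your Fubini sentence, but the order of the two steps matters. Your remark about the sign is also apt: the paper's proof silently drops the minus sign coming from the definition of $\phi^x$; since the identity is only used afterward to show $\nabla\phi\to 0$ in $L^2$, the discrepancy is harmless, but your appeal to the oddness of $\nabla\Theta_1$ does not actually resolve it.
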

\begin{proof}
    From Proposition~\ref{prop::defofTheta},
    \begin{equation}\nonumber
        \phi(y) = -\frac{1}{\delta}\int_0^\delta \Theta_{te^{-\rho(x)}}(y-x)\,dx\,dt
        = -\frac{1}{\delta}\int_0^\delta\int_D \frac{\Theta_{1}\bigl((y-x)(te^{-\rho(x)})^{-\frac12}\bigr)}{te^{-\rho(x)}}\,dx\,dt,
    \end{equation}
    taking gradient and substituting $y-x=\sqrt{t}z$ gives
    \begin{equation}\nonumber
        \nabla_y\phi(y) = \frac{1}{\delta} \int_0^\delta \frac{1}{\sqrt{t}}\int_{\frac{y-D}{\sqrt{t}}} e^{3\rho(y-\sqrt{t}z)/2} \nabla\Theta_1(z e^{\rho(y-\sqrt{t}z)/2})\,dz \,dt,
    \end{equation}
    so the result follows.
\end{proof}

\begin{prop}
\label{prop-F-convergence}
For each $\rho\in\mathcal B$, $r\in \mathbb R$, and $y\in \mathbb R^2$, let
    \begin{equation}\label{eq::defofF}
        F_{\rho, r}(y):= \int_{\mathbb R^2} G_{\rho,r}^y(z)\, dz
    \end{equation}
where $G_{\rho,r}^y$ is defined as~\eqref{eq::defofG}.
Then $F_{\rho, r}$ and $\partial_r F_{\rho, r}$ both converges to 0 in $L^2(\mathbb R^2)$ as $r\to 0$ uniformly over $\rho\in \mathcal B$.
\end{prop}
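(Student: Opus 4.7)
My plan is to simplify $G_{\rho,r}^y(z)$ using the scaling of $\Theta_t$, observe by symmetry that $F_{\rho,0}\equiv 0$ and $\partial_r F_{\rho,r}\bigl|_{r=0}\equiv 0$, and then upgrade both vanishings to uniform $L^2$ bounds via the Lipschitz and precompactness hypotheses. Setting $q(w):=e^{-\rho(w)}$ and using $\nabla\Theta_t(v)=t^{-3/2}\nabla\Theta_1(v/\sqrt{t})$ from Proposition~\ref{prop::defofTheta}, a direct check gives the clean rewriting $G_{\rho,r}^y(z)=\nabla\Theta_{q(y-rz)}(z)$. By Proposition~\ref{prop-exp-lip} applied to $-\rho$, the family $\{q:\rho\in\mcl B\}$ has uniformly bounded Lipschitz constants and is precompact in $W^{1,1}(D)$, with $q$ bounded above and below by positive constants. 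Hence $s=q(\cdot)$ takes values in a fixed compact subinterval of $(0,\infty)$ on which $\partial_s^k\nabla\Theta_s(z)$ for $k=0,1,2$ is dominated by a fixed Schwartz function $C(z)$.

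For $F_{\rho,r}$ itself: since $\Theta_1$ is Schwartz, $\int\nabla\Theta_s(z)\,dz=0$ for every $s>0$, so $F_{\rho,0}\equiv 0$ and
\[
F_{\rho,r}(y)=\int\bigl[\nabla\Theta_{q(y-rz)}(z)-\nabla\Theta_{q(y)}(z)\bigr]\,dz,
\]
whose integrand is bounded pointwise, via the mean value theorem in the $s$ parameter, by $C(z)|q(y-rz)-q(y)|$. Minkowski's integral inequality in $dz$, the Lipschitz bound $|q(y-rz)-q(y)|\le L r|z|$, and the fact that $q\equiv 1$ off $D$ (so the support of $q(\cdot-rz)-q(\cdot)$ has measure $\le 2\,\Vol(D)$) together yield $\|F_{\rho,r}\|_{L^2}=O(r)$ uniformly in $\rho\in\mcl B$.

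For $\partial_r F_{\rho,r}$: differentiating under the integral (legal because $\rho$ is Lipschitz and $\Theta_1$ is Schwartz) gives
\[
\partial_r F_{\rho,r}(y)=-\int\bigl(z\cdot\nabla q(y-rz)\bigr)\,\partial_s\nabla\Theta_{q(y-rz)}(z)\,dz.
\]
The crucial identity is
\[
\int z\otimes\partial_s\nabla\Theta_s(z)\,dz=0\quad\text{for every }s>0.
\]
This follows from the radial symmetry of $\Theta_1$ (Proposition~\ref{prop-defn-theta}): by rotational symmetry the tensor integral is a scalar multiple of the identity, and writing $\partial_s\nabla\Theta_s(z)=-s^{-5/2}\kappa(|z|/\sqrt{s})\hat z$ where $\kappa(\varrho)=\tfrac32\Theta_1'(\varrho)+\tfrac12\varrho\,\Theta_1''(\varrho)$ (radial representative), the scalar coefficient reduces to a multiple of $\int_0^\infty[3\varrho^2\Theta_1'+\varrho^3\Theta_1'']\,d\varrho$, which vanishes after one integration by parts using the Schwartz decay of $\Theta_1$. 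Using this to subtract the zero quantity $-\nabla q(y)\cdot\int z\,\partial_s\nabla\Theta_{q(y)}(z)\,dz$ and splitting the residue,
\begin{align*}
\partial_r F_{\rho,r}(y)=\;&-\nabla q(y)\cdot\int z\bigl[\partial_s\nabla\Theta_{q(y-rz)}(z)-\partial_s\nabla\Theta_{q(y)}(z)\bigr]\,dz\\
&-\int\bigl(z\cdot[\nabla q(y-rz)-\nabla q(y)]\bigr)\,\partial_s\nabla\Theta_{q(y-rz)}(z)\,dz.
\end{align*}
The first piece is pointwise $O(r|z|^2 C(z))$ (by Lipschitz-in-$s$ of $\partial_s\nabla\Theta$) and vanishes off $D$ (since $\nabla q\equiv 0$ there), giving $L^2$-norm $O(r)$ uniformly. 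For the second piece, Minkowski in $dz$ gives a bound by $\int C(z)|z|\,\|\nabla q(\cdot-rz)-\nabla q(\cdot)\|_{L^2(dy)}\,dz$; since $\|\nabla q(\cdot-rz)-\nabla q(\cdot)\|_{L^2}^2\le 2L\,\|\nabla q(\cdot-rz)-\nabla q(\cdot)\|_{L^1}$ (from the $L^\infty$ bound on $\nabla q$), the Fr\'echet--Kolmogorov characterization in Remark~\ref{rmk-uniform-equicontinuity} gives a uniform modulus of continuity $\omega(r|z|)\to 0$ for $\{\nabla q\}$ in $L^1$. Splitting the $z$-integral at $|z|=M(r)\to\infty$ chosen slowly enough, so that Schwartz decay of $C$ controls the tail and uniform equicontinuity controls the bulk, yields $o(1)$ uniformly in $\rho\in\mcl B$.

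The main obstacle is the symmetry identity $\int z\otimes\partial_s\nabla\Theta_s(z)\,dz=0$: without it, $\partial_r F_{\rho,r}$ is only $O(1)$ in $L^2$ and the proposition fails. It rests on both the radial symmetry of $\Theta_1$ (inherited from the rotational invariance of $d\ell$ in Definition~\ref{defn-blm-gen}) and a single integration by parts exploiting Schwartz decay. Once it is established, the remaining work is a routine Lipschitz plus uniform $L^1$-equicontinuity estimate that mirrors the $F_{\rho,r}$ bound but invokes the full strength of precompactness in $W^{1,1}$ rather than merely Lipschitz control.
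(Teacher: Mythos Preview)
Your proof is correct and somewhat different in execution from the paper's. You rewrite $G_{\rho,r}^y(z)=\nabla\Theta_{q(y-rz)}(z)$ with $q=e^{-\rho}$, subtract the frozen value $\nabla\Theta_{q(y)}(z)$ (resp.\ $-(z\cdot\nabla q(y))\,\partial_s\nabla\Theta_{q(y)}(z)$) which integrates to zero by an explicit tensor identity, and then use mean-value-in-$s$ plus Lipschitz for one piece and the Fr\'echet--Kolmogorov $L^1$-equicontinuity of $\nabla q$ for the other. The paper instead exploits the antipodal symmetry $G_{\rho,r}^y(-z)=-G_{\rho,-r}^y(z)$ directly: it truncates to $|z|<K$, pairs $z$ with $-z$ to write $\int G_{\rho,r}^y$ as an integral of $G_{\rho,r}^y-G_{\rho,-r}^y$ over a half-region, and then invokes uniform $L^2$-equicontinuity of $y\mapsto G_{\rho,r}^y(z)$ in the parameter $rz$; for $\partial_r F$ it says only that ``a similar argument applies,'' noting that integration by parts gives $\partial_r F_{\rho,0}=0$ and citing equicontinuity of $\nabla\rho$ in $W^{1,2}$. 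Your route is more explicit and quantitative (you actually get $\|F_{\rho,r}\|_{L^2}=O(r)$ from Lipschitz alone) and makes transparent that the crucial cancellation is exactly $\int z\otimes\nabla\Theta_s\,dz=-I\int\Theta_s$ being constant in $s$; the paper's route is shorter but leaves more to the reader. Both arguments rest on the same two ingredients---radial symmetry of $\Theta_1$ for the vanishing at $r=0$, and $W^{1,1}$-precompactness for the $\partial_r$ convergence---so the difference is organizational rather than conceptual.
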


\begin{proof}
Since $\nabla \Theta_1$ is rapidly decreasing, given any $\varepsilon>0$, there exists $K=K(\epsilon)$ such that 
    \begin{equation}\label{eq::largeKestimate}
        \int_{\abs{z}\ge K} \abs{G_{\rho,r}^y(z)}^2 \,dz <\varepsilon
    \end{equation}
for all $r\in\mathbb R$, $y\in\mathbb R^2$, and $\rho\in \mathcal B$ as $\rho$ is uniformly bounded. By radial symmetry, note that $F_{\rho,0}(y)=0$ for all $\rho$ and $y$ because $\nabla\Theta_1$ is an odd function.
Now we divide the domain of integral~\eqref{eq::defofF} into two regions depending on the sign of $z_1z_2$ when we write $z=(z_1,z_2)\in \mathbb R^2$, so that the integral on each domain becomes
    \begin{equation}\nonumber
        \int_{\pm z_1z_2>0, \abs{z}<K} G_{\rho,r}^y(z)\, dz
        = 
        \int_{z_1>0, \pm z_2>0, \abs{z}<K} \bigl(G_{\rho,r}^y(z)-G_{\rho,-r}^y(z)\bigr)\, dz.
    \end{equation}
Since $\nabla \Theta_1$ is Lipschitz as being Schwartz, the uniform equicontinuity of $\rho$ in $L^2(D)$ (in the sense of Remark~\ref{rmk-uniform-equicontinuity}) implies that $G_{\rho,r}$, as a function of $rz$, is also uniformly equicontinuous in $L^2(\mathbb R^2)$. Therefore, denoting $A_{\pm}=\{z\in\mathbb R^2: z_1>0, \pm z_2>0, \abs{z}< K\}$, there exists $d=d(\varepsilon)>0$ such that
    \begin{align}\nonumber
        \int_D \abs{\int_{\abs{z}<K} G_{\rho,r}^y(z)}^2 dy &\le \sum_{i\in\{\pm\}}\int_{A_i}\int_D \abs{G_{\rho,r}^y(z)-G_{\rho,-r}^y(z)}^2\,dy \,dz
        \le 2\varepsilon
    \end{align}
for all $\rho\in \mathcal B$ whenever $\abs{r}<d$.
Combined with~\eqref{eq::largeKestimate}, we conclude that $F_{\rho,r}$ converges to 0 in $L^2(D)$ uniformly over $\rho\in\mathcal B$ as $r\to 0$.

A similar argument applies to $\partial_r F_{\rho, r}$. In particular, as a function of $\rho$ and $\nabla \rho$, note that $\nabla G_\rho$ is Lipschitz. Hence the uniform equicontinuity of $\rho$ in $W^{1,2}(D)$ (in the sense of Remark~\ref{rmk-uniform-equicontinuity}) implies the uniform equicontinuity of $\nabla G_{\rho, r}^y(z)$ in $L^2(\mathbb R^2)$ over $\rho\in \mathcal B$ as a function of $rz$.
Since $\Theta_1$ is rapidly decreasing, from integration by parts,
    \begin{equation}\nonumber
        \left.\frac{\partial}{\partial r}F_{\rho,r}(y)\right|_{r=0} = -\int_{\mathbb R^2} \nabla_z G_{\rho,0}^y(z)\cdot z\,dz = \int_{\mathbb R^2} 2G_{\rho,0}^y(z)\,dz = 0
    \end{equation}
again by the oddity of $G_{\rho,0}^y$. Repeating the previous argument, we conclude that $\partial_r F_{\rho, r}$ converges to 0 in $L^2(\mathbb R^2)$ uniformly over $\rho\in \mathcal B$ as $r\to 0$.
\end{proof}

\begin{prop}\label{prop::differencePhi}
    For each $y\in\mathbb R^2$, define
\begin{equation}\label{def::Phi}
    \Phi(y):=\frac{1}{\delta}\int_0^\delta \frac{F_{\rho,\sqrt{t}}(y)}{\sqrt{t}}\, dt.
\end{equation}
Then $\nabla \phi- \Phi$ converges to 0 in $L^2(\mathbb R^2)$ uniformly over $\rho\in \mathcal B$ as $\delta\to 0$.
\end{prop}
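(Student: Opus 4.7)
The plan is to first reduce $\nabla\phi - \Phi$ to a quantity independent of $\rho$---which will make the uniformity claim immediate---and then to estimate this quantity via Fourier analysis. Subtracting the integral representations for $\nabla_y\phi$ and $\Phi$, I would note that
\[
(\nabla\phi - \Phi)(y) = -\frac{1}{\delta}\int_0^\delta \frac{1}{\sqrt t}\int_{\mathbb R^2\setminus (y-D)/\sqrt t} G^y_{\rho,\sqrt t}(z)\,dz\,dt.
\]
On the integration region $y - \sqrt t z \notin D$, hence $\rho(y - \sqrt t z) = 0$, and by~\eqref{eq::defofG} the integrand reduces to $\nabla\Theta_1(z)$. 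Since $\rho$ no longer appears anywhere, the uniformity over $\rho \in \mathcal B$ is automatic, and the task reduces to showing that
\[
(\nabla\phi - \Phi)(y) = -\frac{1}{\delta}\int_0^\delta \frac{H_t(y)}{\sqrt t}\,dt, \qquad H_t(y) := \int_{\{z:\, y - \sqrt t z \notin D\}}\nabla\Theta_1(z)\,dz
\]
converges to $0$ in $L^2(\mathbb R^2)$ as $\delta \to 0$.

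Next, I would use the vanishing $\int_{\mathbb R^2}\nabla\Theta_1 = 0$ (from radial symmetry of $\Theta_1$) and the substitution $w = y - \sqrt t z$ to recognize $H_t$ as a convolution,
\[
H_t = -\eta_t * \mathbf 1_D, \qquad \eta_t(u) := t^{-1}\nabla\Theta_1(u/\sqrt t).
\]
The crucial observation is that $\eta_t/\sqrt t = \nabla \widetilde\Theta_t$, with $\widetilde\Theta_t(u) := t^{-1}\Theta_1(u/\sqrt t)$ being the $L^1$-preserving rescaling of $\Theta_1$. Consequently
\[
\nabla\phi - \Phi = \nabla(\Xi_\delta * \mathbf 1_D), \qquad \Xi_\delta := \frac{1}{\delta}\int_0^\delta \widetilde\Theta_t\, dt,
\]
and by Plancherel and the scaling $\widehat{\widetilde\Theta_t}(\zeta) = \widehat\Theta_1(\sqrt t\,\zeta)$ one computes (using radial symmetry)
\[
\widehat{\Xi_\delta}(\zeta) = \frac{1}{\delta}\int_0^\delta \widehat\Theta_1(\sqrt t\,\zeta)\,dt = \frac{2\,I(\sqrt\delta|\zeta|)}{\delta|\zeta|^2}, \qquad I(r) := \int_0^r \widehat\Theta_1(s)\,s\,ds,
\]
so that $\|\nabla\phi - \Phi\|_{L^2}^2$ is a constant multiple of $\int |\zeta|^2\,|\widehat{\Xi_\delta}(\zeta)|^2\,|\widehat{\mathbf 1_D}(\zeta)|^2\,d\zeta$.

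To conclude, I would substitute $\xi = \sqrt\delta\,\zeta$ so as to move the $\delta$-dependence into the rescaled factor $|\widehat{\mathbf 1_D}(\xi/\sqrt\delta)|^2$, and then apply a dominated convergence argument using the Schwartz decay of $\widehat\Theta_1$ together with the decay of $\widehat{\mathbf 1_D}$ at infinity (controlled by the regularity of $\partial D$). The main obstacle I expect is controlling the behavior near $\partial D$: because $\int\Theta_1 = \widehat\Theta_1(0) \neq 0$, the kernel $\Xi_\delta$ concentrates near the origin as an approximation to a weighted $\delta$-measure, so $\Xi_\delta * \mathbf 1_D$ heuristically approaches a multiple of $\mathbf 1_D$ and its gradient develops a contribution concentrated near $\partial D$. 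Showing that this contribution vanishes in $L^2$ requires exploiting cancellations from the radial symmetry of $\Theta_1$ combined with the averaging in $t$---which, on the Fourier side, appear through the fact that $I(\sqrt\delta|\zeta|)$ saturates to its asymptotic value only on the scale $|\zeta|\gtrsim 1/\sqrt\delta$, so the bad high-frequency regime of $|\widehat{\Xi_\delta}|^2$ is matched against the decay of $|\widehat{\mathbf 1_D}|^2$ after the rescaling.
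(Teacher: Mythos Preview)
Your reduction to the $\rho$-independent quantity $\nabla(\Xi_\delta * \mathbf 1_D)$ is clean and correct, and the uniformity in $\rho$ indeed becomes automatic. The problem is that this quantity simply does \emph{not} tend to $0$ in $L^2(\mathbb R^2)$. Since $\widehat{\Xi_\delta}(\zeta)=\frac{1}{\delta}\int_0^\delta g'(t|\zeta|^2)\,dt=\dfrac{g(\delta|\zeta|^2)-1}{\delta|\zeta|^2}$ (with $g(u)=\widehat\theta(\zeta,s)$ for $u=s|\zeta|^2$), we have $\widehat{\Xi_\delta}(\zeta)\to g'(0)=-b/2\neq 0$ pointwise as $\delta\to 0$, with $|\widehat{\Xi_\delta}|$ uniformly bounded. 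By Fatou,
\[
\liminf_{\delta\to 0}\,\|\nabla(\Xi_\delta * \mathbf 1_D)\|_{L^2}^2
\;\ge\; \tfrac{b^2}{4}\,(2\pi)^{-2}\!\int_{\mathbb R^2}|\zeta|^2\,|\widehat{\mathbf 1_D}(\zeta)|^2\,d\zeta
\;=\;\tfrac{b^2}{4}\,\|\nabla \mathbf 1_D\|_{L^2}^2=\infty,
\]
since $\mathbf 1_D\notin H^1$. Equivalently: $\Xi_\delta$ is an approximate point mass of total mass $-b/2$ at scale $\sqrt\delta$, so $\Xi_\delta*\mathbf 1_D\to -\tfrac{b}{2}\mathbf 1_D$ in $L^2$ and hence $\nabla(\Xi_\delta*\mathbf 1_D)\to -\tfrac{b}{2}\nabla\mathbf 1_D\neq 0$ in the sense of distributions, which already rules out $L^2$-convergence to $0$.

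The resolution you sketch in the last paragraph cannot work. There is no further cancellation to extract from radial symmetry or from the $t$-average: both are already fully encoded in the scalar multiplier $\widehat{\Xi_\delta}$, which is bounded away from zero on any fixed frequency. And the appeal to decay of $\widehat{\mathbf 1_D}$ is unavailable in this paper --- $D$ is an arbitrary bounded open set with no boundary regularity assumed, so one only knows $\widehat{\mathbf 1_D}\in L^2\cap L^\infty$; even for the unit disk (where $|\widehat{\mathbf 1_D}(\zeta)|\sim|\zeta|^{-3/2}$) the integral diverges like $\delta^{-1/2}$. The paper's (very brief) argument goes in a different direction: it compares $G^y_{\rho,\sqrt t}$ to $G^y_{\rho,0}$ pointwise and uses that the dilated domain $(y-D)/\sqrt t$ swallows any fixed ball $\{|z|\le K\}$ for small $t$, thereby controlling the difference between the integrals over $(y-D)/\sqrt t$ and over $\mathbb R^2$ directly rather than passing through $\nabla(\Xi_\delta*\mathbf 1_D)$. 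Your route, as written, cannot reach the conclusion.
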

\begin{proof}
    Given $\ep>0$, let $K=K(\ep)$ be defined such that \eqref{eq::largeKestimate} holds.
    Choose $d(\varepsilon)$ small enough so that as long as $0<r<d$, we have $\abs{G_{\rho,r}^y(z)-G_{\rho,0}^y(z))}<\varepsilon/\pi K^2$ and the diameter of $(y-D)/r$ is greater than $K$.
    Using the triangle inequality, we obtain the desired result from~\eqref{eq::defofgradphi} and~\eqref{def::Phi}.
\end{proof}

\begin{proof}[Proof of Proposition~\ref{prop-mean-rho}]
By the mean value theorem, with $\Phi$ is defined in Proposition~\ref{prop::differencePhi}, we have
\begin{equation}\nonumber
    \int_D\abs{\Phi(y)}^2\,dy \le \frac{1}{\delta}\int_0^\delta \int_D\abs{\frac{F_{\rho,\sqrt{t}}(y)}{\sqrt{t}}}^2\,dy\,dt
    = \int_D\abs{\partial_r {F_{\rho,r}(y)}}^2\,dy
\end{equation}
for some $r\in (0,\sqrt{\delta})$, that is $\Phi$ converges to 0 in $L^2(\mathbb R^2)$ as $\delta\to 0$ uniformly over $\rho\in \mathcal B$ by Proposition~\ref{prop-F-convergence}. With Proposition~\ref{prop::differencePhi}, we conclude $\nabla \phi$ also converges to 0 in $L^2(\mathbb R^2)$ as $\delta\to 0$ uniformly over $\rho\in \mathcal B$. This completes the proof.
\end{proof}

\begin{proof}[Proof of Lemma~\ref{lem::convolveestimateconverges}]
Let $Y = \rho(Z) - \rho(X)$, so that 
\begin{align} \nonumber \delta^{-1} \mathbb E[e^{\rho(Z)} - e^{\rho(X)}]  &=  \delta^{-1}  \mathbb E\left[ e^{\rho(X)} \left( Y + \frac{Y^2}{2!} + \frac{Y^3}{3!} + \ldots \right)\right]  \\
&= \label{eqn::twopart} \delta^{-1} \mathbb E[e^{\rho(X)} Y] 
+\delta^{-1} \mathbb E\left[ e^{\rho(X)} \left(\frac{Y^2}{2!} + O(Y^3) \right)\right] \Bigr)
\end{align}
By Proposition~\ref{prop-mean-rho}, the term $\delta^{-1} \BB{E}[e^{\rho(X)} Y]$ is $o(1)$ with the convergence uniform in $\rho \in \mcl B$.  We now analyze the second term in~\eqref{eqn::twopart}.  Since the gradients $\nabla \rho$ for $\rho \in \mcl B$ are uniformly equicontinuous in $L^1$ (in the sense of Remark~\ref{rmk-uniform-equicontinuity}), we can express $Y$ as $\nabla \rho(X) \cdot (Z-X) + \varepsilon$ almost surely, where $\varepsilon$ is an error term uniformly $o(|Z-X|)$ in expectation, as $|Z-X| \to 0$. Conditional on $X$, each coordinate of $Z-X$ has mean $X$ and variance $\cdelta b$.  Therefore, with $U \sim \text{Uniform}[0,2\pi]$, 
\[
 \BB E((\nabla \rho(X) \cdot (Z-X))^2|X) 
 = |\nabla \rho(X)|^2 \BB E\left[|Z-X|^2 | X\right] \BB{E}(\cos^2(U))
 =  \cdelta b|\nabla \rho(X)|^2.
\]
Therefore,
\begin{align}
\nonumber
    \mathbb E[Y^2|X] &= \mathbb E\left[ (\nabla \rho(X) \cdot (Z-X) + \varepsilon)^2 |X \right]\\ &= 
    \nonumber
    \cdelta b|\nabla \rho(X)|^2 + \mathbb E[2 \varepsilon \nabla \rho(X) \cdot (Z-X)+\varepsilon^2|X] \\ &=  \cdelta b|\nabla \rho(X)|^2 + o(\delta),\nonumber 
\end{align}
with the $o(\delta)$ error uniform in $\rho$ and $X$. We can similarly show that $\mathbb E[Y^3|X]$ is $o(\delta)$ uniformly in $\rho$ and $X$. Multiplying both sides by $\delta^{-1} e^{\rho(X)}$ and taking the expectation, we deduce that the second term in~\eqref{eqn::twopart} is equal to
$\frac{b}{2}(\rho,\rho)_\nabla$ plus an $o(1)$ error that converges uniformly in $\rho \in \mcl B$. 
\end{proof}

\begin{proof}[Proof of Theorem~\ref{thm::loopconvthm2}]  The result immediately follows from combining Proposition~\ref{prop-clenarea}, Lemma~\ref{lem::convolveestimate} and Lemma~\ref{lem::convolveestimateconverges}.
\end{proof}

Finally, to deduce Theorem~\ref{thm::loopconvthm} from Theorem~\ref{thm::loopconvthm2}, we explicitly characterize the expected occupation measure. 

\begin{prop}
\label{prop-density}
For the Brownian loop measure, the expected occupation measure of a loop sampled from $(\mcl L, d\ell)$  is the density of a complex Gaussian  with variance $1/12$.
\end{prop}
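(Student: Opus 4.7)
The plan is to reduce the statement to a short Brownian bridge calculation. By Fubini the expected occupation measure satisfies $\theta(z)=\int_0^1 p_s(z)\,ds$, where $p_s$ is the marginal density of $\ell(s)$, so it suffices to show $p_s$ is the Gaussian density of variance $\tfrac{1}{12}$ for \emph{every} $s\in[0,1]$; then the $s$-integral is trivial and $\theta=p_s$.

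First I would use the explicit construction of $\ell$ from Definition~\ref{defn-blm}. Let $B$ be a complex Brownian motion (independent real and imaginary parts, each a standard real Brownian motion), so that the complex Brownian bridge on $[0,1]$ is $W_s:=B_s-sB_1$, and $\ell(s)=W_s-\bar W$ where $\bar W:=\int_0^1 W_u\,du$. Since every step here is linear in $B$ and both real components of $B$ have the same law, $\ell(s)$ is a centered complex Gaussian whose real and imaginary parts are i.i.d., so its law on $\BB R^2$ is fully determined by $\Var(\Re\,\ell(s))$.

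Working then with a real-valued Brownian bridge, I would start from the standard covariance $\Cov(W_s,W_t)=\min(s,t)-st$. A direct integration gives
\[
\Cov(W_s,\bar W)=\int_0^1(\min(s,u)-su)\,du=\tfrac12 s(1-s), \qquad \Var(\bar W)=\int_0^1\tfrac12 v(1-v)\,dv=\tfrac{1}{12}.
\]
Consequently,
\[
\Var(\ell(s)) \;=\; \Var(W_s)-2\Cov(W_s,\bar W)+\Var(\bar W) \;=\; s(1-s)-s(1-s)+\tfrac{1}{12} \;=\; \tfrac{1}{12},
\]
independent of $s$. Thus for every $s$, $p_s$ is the density of a complex Gaussian with variance $\tfrac{1}{12}$, and this is also $\theta$.

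There is no real obstacle here; the only mildly striking point is the cancellation $\Var(W_s)=2\Cov(W_s,\bar W)$, which simply expresses that subtracting the temporal mean $\bar W$ from the bridge $W$ yields a process whose one-time variance is constant in $s$.
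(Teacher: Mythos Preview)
Your proof is correct. The covariance computations are accurate, and the key observation that $\Var(\ell(s))$ is independent of $s$ (so that the occupation density equals the one-time marginal) is exactly the right reduction.

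The paper's proof reaches the same conclusion by a different route. Rather than computing $\Cov(W_s,W_t)$ directly, it views the Brownian bridge as a Gaussian free field on the circle and identifies the functional $h\mapsto h(0)-\int_{-1/2}^{1/2}h$ with the Dirichlet pairing $(f,\cdot)_\nabla$ for $f(x)=x^2/2$; the variance is then $(f,f)_\nabla=\int_{-1/2}^{1/2}x^2\,dx=1/12$. Your argument is more elementary and self-contained, requiring only the standard bridge covariance $\min(s,t)-st$ and two one-line integrals. The paper's approach, while slightly more abstract, connects the computation to the Dirichlet-energy framework used throughout the paper and (as noted there) also links to a Fourier-series derivation of the same constant, yielding a proof of $\sum n^{-2}=\pi^2/6$ as a byproduct. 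Both arguments ultimately exploit the rotational invariance of the law of $\ell$ on the circle to reduce to a single time point.
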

\begin{proof}

The law of a loop sampled from $(\mcl L, d\ell)$ is that of a Brownian bridge indexed by the circle minus its mean.  The value of a Brownian bridge indexed by the circle at any given time minus the mean value is a complex mean-zero Gaussian random variable of variance $1/12$; this calculation appears, for example, in \cite{shef-gff}.\footnote{One way to see it is to consider a Gaussian free field $h$ indexed by the circle and observe that the Dirichlet energy on the circle parameterized by $[-1/2,1/2]$ of the function $f(x) = x^2/2$ is given by $1/12$, so $(f,f)_\nabla = 1/12$.  But for a function $g$ on the circle we have from integration by parts that $(f(x), g(x))_\nabla = g(0) - \int_{-1/2}^{1/2} g(x)dx$. Then using the above and the definition of the GFF we have $\Var\bigl((h,f)_\nabla\bigr) = \Var\bigl(h(1/2) - \int_0^1 h(x)dx \bigr) = (f,f)_\nabla=1/12$. By rotational symmetry, this holds if $0$ is replaced by any other number in $[-1/2,1/2]$. The number $1/12$ is also derived in \cite{shef-gff} by Fourier series, and we remark that comparing these two derivations is one way to prove $\sum_{n=1}^\infty n^{-2} = \pi^2/6$. } 
\end{proof}

\begin{proof}[Proof of Theorem~\ref{thm::loopconvthm2}]
The result follows from Theorem~\ref{thm::loopconvthm2} and Proposition~\ref{prop-density}.
\end{proof}

\section{Brownian loops on surfaces}
\label{sec-loop-general-surface}

In this section, we prove Theorem~\ref{thm-loop-general-surface}.  Throughout the section, we let $(M,g)$ be a fixed compact smooth two-dimensional Riemannian manifold, and we let $\mu^{\text{loop}}$ denote the Brownian loop measure on $(M,g)$. We also fix $\mathcal B$ as a precompact set of Lipschitz functions in $W^{1,1}(M)$ with uniformly bounded Lipschitz constants.

To prove Theorem~\ref{thm-loop-general-surface}, we analyze the mass of ``large'' loops and the mass of ``small'' loops with respect to $\rho$-length separately.  We begin by analyzing the mass of large loops by proving a central limit theorem for $\rho$-length along large loops:

\begin{prop}
\label{prop-loop-clt}
We can choose a constant $c > 0$ such that, for every $\ep,t>0$ and $\rho \in \mcl B$ and every loop $L$ sampled from $\mu(z,z;t)$,
\begin{equation}
\label{eqn-loop-clt}
\left| \int_0^t e^{\rho(L(s))} ds - t \Vol_{\rho}(M)/\Vol(M)\right| \leq ct^{(1+\ep)/2}
\end{equation}
with probability $1 - O(e^{-t^{\ep}})$ as $t \to \infty$, with the rate uniform in $\rho \in \mcl B$.
\end{prop}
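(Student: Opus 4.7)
The proposition is a quantitative law of large numbers with sub-Gaussian tails for the additive functional $X_t := \int_0^t f(L(s))\,ds$, where $f(x) := e^{\rho(x)} - \Vol_\rho(M)/\Vol(M)$ is mean-zero with respect to normalized volume on $M$. Since the functions in $\mathcal B$ are uniformly bounded (Remark~\ref{rmk-boundedness}), $\|f\|_\infty \le \Lambda_0$ for a constant independent of $\rho$. The plan is to establish a uniform sub-Gaussian moment generating function bound
\begin{equation*}
\mathbb E^{z,z;t}\bigl[e^{\lambda X_t}\bigr] \le C\exp(C' t \lambda^2), \qquad |\lambda|\le \lambda_0,
\end{equation*}
with $C, C', \lambda_0 >0$ independent of $\rho \in \mathcal B$, $z \in M$, and $t\ge 1$, and then to conclude via a Chernoff bound.

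For the MGF bound I would use the Feynman--Kac formula together with the definition of the Brownian bridge to write
\begin{equation*}
\mathbb E^{z,z;t}\bigl[e^{\lambda X_t}\bigr] = \frac{e^{-tH_\lambda}(z,z)}{p_t^M(z,z)},
\end{equation*}
where $H_\lambda := -\Delta - \lambda f$ is a Schr\"odinger operator on $(M,g)$ and $p_t^M$ is the heat kernel of $\Delta$. Spectrally expanding both kernels, the denominator equals $1/\Vol(M) + O(e^{-c_0 t})$ (with $c_0$ the spectral gap of $-\Delta$), and the numerator is dominated for large $t$ by the ground-state term $e^{-tE_0(\lambda)}|\psi_0^\lambda(z)|^2$. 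Analytic perturbation theory applied to $H_\lambda$, viewed as a bounded perturbation of $-\Delta$ (which has simple eigenvalue $0$ with constant ground state $1/\sqrt{\Vol(M)}$), yields for $|\lambda|\le \lambda_0$
\begin{equation*}
E_0(\lambda) = -\tfrac12\sigma^2(\rho)\lambda^2 + O(\lambda^3), \qquad \bigl|\,|\psi_0^\lambda(z)|^2 - 1/\Vol(M)\,\bigr| = O(\lambda),
\end{equation*}
together with a uniform lower bound $E_1(\lambda)-E_0(\lambda) \ge c_0/2$. The linear term in $E_0(\lambda)$ vanishes precisely because $f$ has mean zero, and the quantity $\sigma^2(\rho) = 2\sum_{n\ge 1}|\langle \psi_n,f\rangle|^2/(\Vol(M)\lambda_n)$ together with the remainder terms are controlled uniformly in $\rho\in \mathcal B$ by $\|f\|_\infty$, producing the claimed MGF bound.

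The optimized Chernoff estimate then gives $\mathbb P^{z,z;t}(|X_t| > u) \le 2C\exp(-u^2/(4C't))$ for all $u$ up to order $t$; setting $u = c\, t^{(1+\ep)/2}$ with $c = 2\sqrt{C'}$ yields the desired bound $\mathbb P^{z,z;t}(|X_t| > c t^{(1+\ep)/2}) \le 2C\exp(-t^\ep)$ for $\ep<1$ and $t$ large, while the case $\ep \ge 1$ follows from the deterministic bound $|X_t| \le \Lambda_0 t$. The main obstacle is the \emph{uniformity}: the spectral gap of $-\Delta$ on $M$, the perturbation radius $\lambda_0$, the eigenfunction bound, and the error in the perturbation expansion must all be taken uniformly over $\rho \in \mathcal B$ and $z \in M$. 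This is not automatic, but since the perturbation analysis depends on $\rho$ only through $\|f\|_\infty$ (and secondarily through $\|\nabla f\|_\infty$, also uniformly bounded on $\mathcal B$) together with the fixed geometry of $(M,g)$, uniformity can be preserved throughout.
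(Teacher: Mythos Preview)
Your approach is correct and genuinely different from the paper's. The paper discretises time into unit blocks $Y_n=B|_{[n,n+1]}$, applies a Markov chain central limit theorem to the free Brownian motion (using the exponential mixing of the heat kernel, Proposition~\ref{prop-001}, to get exponentially decaying covariances), and then transfers the resulting concentration bound from Brownian motion to the Brownian bridge via the Radon--Nikodym estimate of Proposition~\ref{prop-rn-derivative} on the interval $[0,t-\sqrt t]$. Your route bypasses both the discretisation and the change-of-measure step: by writing $\mathbb E^{z,z;t}[e^{\lambda X_t}]=e^{-tH_\lambda}(z,z)/p_t^M(z,z)$ you work with the loop directly, and analytic perturbation of the simple eigenvalue $0$ of $-\Delta$ (with the spectral gap of $(M,g)$ playing the same role as the paper's exponential mixing) yields the MGF bound and hence the sub-Gaussian tail via Chernoff. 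This is arguably cleaner for the stated conclusion, since the paper's invocation of a ``Markov chain CLT'' really needs a concentration/moderate-deviation statement to get the $1-O(e^{-t^{\ep}})$ probability, whereas your MGF bound produces it immediately. One point to make explicit in your write-up: when bounding the numerator $e^{-tH_\lambda}(z,z)$ you must control the full eigenfunction sum, not just the ground-state term; this is routine (e.g.\ split off one unit of time and bound $\sum_{n\ge1}e^{-E_n(\lambda)}|\psi_n^\lambda(z)|^2\le e^{-H_\lambda}(z,z)\le e^{\lambda_0\Lambda_0}\sup_z p_1^M(z,z)$), but it is where the compactness of $M$ and the uniform bound on $\|f\|_\infty$ enter in an essential way.
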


To prove Proposition~\ref{prop-loop-clt}, we apply the Markov central limit theorem to Brownian motion on $(M,g)$, and then compare Brownian motion to a loop sampled from $\mu(z,z;t)$ by using the following Radon-Nikodym estimate.  

\begin{prop}
\label{prop-rn-derivative}
Let $z \in M$ be fixed, and let $L$ be a loop sampled from $\mu(z,z;t)$.  The  Radon-Nikodym derivative of the law of $L|_{[0,s]}$ with respect to Brownian motion restricted to $[0,s]$ is given by $1 + O(e^{-\alpha (t-s)})$ for some $\alpha = \alpha(M,g) > 0$.  
\end{prop}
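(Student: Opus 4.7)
The natural approach is via the heat kernel on $(M,g)$. By the Markov property, if $p_u(x,y)$ denotes the heat kernel, then the law of the Brownian loop $L|_{[0,s]}$ under $\mu(z,z;t)$ is absolutely continuous with respect to the law of Brownian motion started at $z$ and run for time $s$, with Radon-Nikodym derivative
\[
\frac{d\mu(z,z;t)|_{[0,s]}}{dBM^z|_{[0,s]}}(L) = \frac{p_{t-s}(L(s),z)}{p_t(z,z)}.
\]
So the task reduces to showing that both numerator and denominator are close to $1/\Vol(M)$ with an exponentially small error.

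The plan is to use the spectral decomposition
\[
p_u(x,y) = \sum_{n=0}^\infty e^{-\lambda_n u} \phi_n(x)\phi_n(y),
\]
where $0 = \lambda_0 < \lambda_1 \leq \lambda_2 \leq \cdots$ are the Laplace eigenvalues and $\phi_n$ are the corresponding $L^2$-normalized eigenfunctions, so that $\phi_0 \equiv 1/\sqrt{\Vol(M)}$. Setting $\alpha := \lambda_1$ (the spectral gap), I would show that for $u$ bounded below by a fixed constant,
\[
\left| p_u(x,y) - \frac{1}{\Vol(M)} \right| \leq C e^{-\alpha u}
\]
uniformly in $x,y \in M$. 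From this, a quick calculation yields
\[
\frac{p_{t-s}(L(s),z)}{p_t(z,z)} = \frac{\tfrac{1}{\Vol(M)} + O(e^{-\alpha(t-s)})}{\tfrac{1}{\Vol(M)} + O(e^{-\alpha t})} = 1 + O(e^{-\alpha(t-s)}),
\]
uniformly in $z$ and the path $L$, which is the claimed bound. (For $t-s$ of order $1$, the bound is vacuous, so the content is really for large $t-s$.)

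The only step requiring a bit of care is the uniform remainder estimate for $p_u - 1/\Vol(M)$. I would obtain it by writing
\[
p_u(x,y) - \tfrac{1}{\Vol(M)} = \sum_{n \geq 1} e^{-\lambda_n u} \phi_n(x)\phi_n(y)
\]
and, for $u \geq u_0 > 0$, factoring out $e^{-\lambda_1 (u - u_0)}$ and bounding the remaining sum by Cauchy-Schwarz and the fact that $p_{u_0}(x,x) = \sum_n e^{-\lambda_n u_0}\phi_n(x)^2$ is uniformly bounded on a compact smooth manifold. Since $(M,g)$ is fixed and smooth, this is standard; the exponent $\alpha$ depends only on $(M,g)$. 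This is the main (mild) technical input; everything else is a direct calculation.
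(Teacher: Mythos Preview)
Your proof is correct and follows essentially the same approach as the paper: both express the Radon--Nikodym derivative via the Markov property as a ratio of heat kernel values and then invoke the uniform exponential convergence $|p_u(x,y)-1/\Vol(M)|=O(e^{-\alpha u})$ (stated separately in the paper as Proposition~\ref{prop-001}). The only minor difference is in justifying that uniform remainder: you use the Cauchy--Schwarz/on-diagonal trick $\sum_{n\ge1}e^{-\lambda_n u_0}\phi_n(x)^2=p_{u_0}(x,x)-1/\Vol(M)$, whereas the paper cites the eigenfunction sup-norm bound $\|\phi_n\|_\infty\le c\lambda_n^{1/4}$---both are standard and yield the same conclusion.
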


\begin{proof}
By Proposition~\ref{prop-001}, $\|\mu(\cdot,z;t-s)/\mu(z,z;t)\|_{\infty} = 1 + O(e^{-\alpha (t-s)})$ for some $\alpha = \alpha(M,g) > 0$, with the error uniform in $z \in M$.  This implies the derivative bound.
\end{proof}

To apply the Markov central limit theorem to Brownian motion on $(M,g)$, we need the following convergence result for the Brownian transition kernel $\mu(\cdot,\cdot;t)$ as $t \to \infty$.

\begin{prop}
\label{prop-001}
We have \[
\|\mu(\cdot,\cdot;t)-(\Vol(M))^{-1}\|_{L^\infty(M \times M)} = O(e^{-\alpha t}).
\]
for some constant $\alpha>0$.
\end{prop}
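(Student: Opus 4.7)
The plan is to exploit the spectral decomposition of the Laplace-Beltrami operator $\Delta$ on the compact smooth Riemannian manifold $(M,g)$. Since $(M,g)$ is compact, $\Delta$ has discrete spectrum $0 = -\lambda_0 > \lambda_1 \geq \lambda_2 \geq \cdots$ with a corresponding $L^2$-orthonormal basis of smooth eigenfunctions $\{\phi_n\}_{n \geq 0}$, where $\phi_0 \equiv 1/\sqrt{\Vol(M)}$ and the spectral gap gives $\lambda_1 < 0$. The heat kernel then admits the pointwise-convergent expansion
\begin{equation*}
\mu(x,y;t) = \sum_{n=0}^\infty e^{\lambda_n t}\phi_n(x)\phi_n(y),
\end{equation*}
so isolating the $n=0$ term yields
\begin{equation*}
\mu(x,y;t) - \frac{1}{\Vol(M)} = \sum_{n \geq 1} e^{\lambda_n t}\phi_n(x)\phi_n(y).
\end{equation*}

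Next I would split $t = (t-1) + 1$, use $e^{\lambda_n(t-1)} \leq e^{\lambda_1(t-1)}$ for all $n \geq 1$, and apply the Cauchy-Schwarz inequality to obtain
\begin{equation*}
\left|\mu(x,y;t) - \frac{1}{\Vol(M)}\right| \leq e^{\lambda_1(t-1)}\left(\sum_{n \geq 1} e^{\lambda_n}\phi_n(x)^2\right)^{1/2}\left(\sum_{n \geq 1} e^{\lambda_n}\phi_n(y)^2\right)^{1/2}.
\end{equation*}
The two factors on the right are bounded by $\sqrt{\mu(x,x;1)\mu(y,y;1)}$, which is uniformly bounded in $(x,y) \in M \times M$ by the standard short-time on-diagonal heat-kernel asymptotics for a smooth compact manifold (controlled by Weyl's law). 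Setting $\alpha := -\lambda_1 > 0$ therefore gives the claimed bound $\|\mu(\cdot,\cdot;t) - \Vol(M)^{-1}\|_{L^\infty(M \times M)} = O(e^{-\alpha t})$, valid uniformly for $t \geq 1$ (and trivially for bounded $t$ up to enlarging the implicit constant).

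The argument is essentially standard, so there is no real obstacle; the main thing to check carefully is the uniform diagonal bound $\sup_{x \in M} \mu(x,x;1) < \infty$, which follows from Mercer-type estimates on the smooth compact Riemannian manifold $(M,g)$, and the strict positivity of the spectral gap $-\lambda_1$, which follows from compactness and connectedness of $M$.
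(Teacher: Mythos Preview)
Your proof is correct and follows the same spectral-decomposition approach as the paper. The only difference is in how the tail $\sum_{n\ge 1} e^{\lambda_n t}\phi_n(x)\phi_n(y)$ is controlled: the paper invokes the eigenfunction sup-norm bound $\|\phi_n\|_\infty \le c\,|\lambda_n|^{1/4}$ from \cite{donnelly2001bounds} and sums the resulting series directly, whereas you split off $e^{\lambda_1(t-1)}$ and apply Cauchy--Schwarz to reduce to the on-diagonal value $\mu(x,x;1)$. Your route is slightly more self-contained, since it avoids citing a nontrivial eigenfunction estimate and only needs continuity of the heat kernel on a compact manifold (the reference to Weyl's law is unnecessary here; $\mu(\cdot,\cdot;1)$ is simply a continuous function on the compact set $M\times M$). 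Both arguments implicitly use that $M$ is connected to ensure $\lambda_1<0$.
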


\begin{proof} 
Let $0=\lambda_0 \leq \lambda_1 \leq \lambda_2 \leq \cdots$ denote the eigenvalues of $\Delta$, and let $\{\phi_n\}$ be a corresponding Hilbert basis of eigenfunctions.  It follows from the heat equation  that $p$ can be written as
\[
p(x,y;t) = \sum_{n=0}^{\infty} e^{-t \lambda_n} \phi_n(x) \phi_n(y).
\]
It is known that $\|\phi_n\|_\infty \leq c \lambda_n^{1/4}$ for some constant $c$ depending only on $(M,g)$~\cite{donnelly2001bounds}. Hence, $\|\mu(\cdot,\cdot;t)-(\Vol(M))^{-1}\|_{L^\infty(M \times M)}$ is bounded from above by a function of $t$ that decays exponentially as $t \to \infty$.
\end{proof}

\begin{proof}[{Proof of Proposition~\ref{prop-loop-clt}}]
(In the proof that follows, the rate of convergence of the $O(\cdot)$ errors are uniform in the choice of $\rho \in \mcl B$.)
Let $B_t$ be a Brownian motion on $(M,g)$ started at a point sampled from the volume measure associated to $(M,g)$.
Let $Y_n = B|_{[n,n+1]}$, and let $f(Y_n) = \int_n^{n+1} e^{ \rho(B_t)} dt$.  For $y \in M$, let $f_*(y)$ be the expected value of $\int_n^{n+1} e^{ \rho(B^y_t)} dt$, where $B_t^y$ is a Brownian motion on $(M,g)$ started at $y$.  
By Proposition~\ref{prop-001}, the conditional expectation of $f(Y_n)$ given $Y_0$ is $\int_M p(B_1,y;n-1) f_*(y) \Vol(dy)=  \mathbb{E}(f(Y_0)) + O(e^{-\alpha n})$ for some $\alpha = \alpha(M,g)$.
Hence, $\Cov(f(Y_0),f(Y_n)) =  O(e^{-\alpha n})$ for some $\alpha = \alpha(M,g)$. Thus, we may apply the Markov chain central limit theorem to deduce that 
\[
\sqrt{n} \left( \frac{1}{n} \sum_{j=1}^n f(Y_j) - \mathbb{E}(f(Y_0)) \right)
\]
converges in the $n \to \infty$ limit to a centered Gaussian distribution whose variance is bounded uniformly in the choice of $\rho \in \mcl B$.  Note that we have $\mathbb{E}(f(Y_0)) = \frac{1}{\Vol(M)}\int_M e^{ \rho(x)}) \Vol(dx) = \Vol_{\rho}(M)/\Vol(M)$.
Therefore, for each fixed $\ep,c>0$,
\[
\abs{\int_0^n e^{\rho(B_t)} dt - n \Vol_{\rho}(M)/\Vol(M)}\leq cn^{(1+\ep)/2} 
\]
with probability $1 - O(e^{-n^{\ep}})$.

Combining this with Proposition~\ref{prop-rn-derivative}, we deduce that, if $n$ is an integer with $t - 2\sqrt{t} \leq n \leq t - \sqrt{t}$, then 
\[
\abs{\int_0^n e^{\rho(L(s))} ds - n \Vol_{\rho}(M)/\Vol(M)} \leq c\sqrt{t}^{(1+\ep)/2}
\]
with probability $1 - O(e^{-t^{\ep}})$ as $t \to \infty$. Since $\int_n^t e^{\rho(L(s))} ds$ is bounded from above by $\sqrt{t}$ times a constant uniform in $\rho \in \mcl B$, this implies the proposition.
\end{proof}

We now apply Proposition~\ref{prop-loop-clt} to analyze the mass of large loops.

\begin{prop}
\label{prop-large-loops}
For each $\ep>0$, the symmetric difference between 
\begin{itemize} \item the set of loops with length $\geq \delta$ and length $\leq C$ , and \item the set of loops with length $\geq \delta$ and $\rho$-length $\leq C \Vol_\rho(M)/\Vol(M)$, \end{itemize} has $\mu^{\text{loop}}$-mass at most $O(C^{(-1+\ep)/2})$, with the rate of convergence uniform in $\rho \in \mcl B$.
\end{prop}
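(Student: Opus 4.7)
The plan is to use Proposition~\ref{prop-loop-clt} to localize the symmetric difference to loops whose length $t$ lies in a thin window of width $O(C^{(1+\ep)/2})$ about $C$, and then to bound the $\mu^{\mathrm{loop}}$-mass of such a window using the large-$t$ heat-kernel asymptotics of Proposition~\ref{prop-001}. Write $r_\rho := \Vol_\rho(M)/\Vol(M)$, $t = \len(L)$, and $R = \len_\rho(L)$; the two sets are $A = \{\delta \le t \le C\}$ and $B = \{t \ge \delta,\ R \le C r_\rho\}$. Remark~\ref{rmk-boundedness} gives uniform positive upper and lower bounds on $r_\rho$ over $\rho \in \mcl B$, which is the only $\rho$-dependent input needed beyond the CLT.

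For $L \in A \setminus B$ one has $t \le C$ and $R > C r_\rho \ge t r_\rho$, so on the good CLT event the inequality $(C-t)r_\rho < R - t r_\rho \le c t^{(1+\ep)/2}$ forces $C - t = O(C^{(1+\ep)/2})$. Symmetrically, for $L \in B \setminus A$ one has $t > C$ and $R \le C r_\rho \le t r_\rho$, and on the good event $(t-C)r_\rho \le t r_\rho - R \le c t^{(1+\ep)/2}$. If $t$ were much larger than $C$, then $(t-C)r_\rho$ would be of order $t$ while $c t^{(1+\ep)/2}$ is of lower order, giving a contradiction; hence $t - C = O(C^{(1+\ep)/2})$ on the good event as well. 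Thus the good-event part of $A \triangle B$ is contained in the length window $W := [C - K C^{(1+\ep)/2}, C + K C^{(1+\ep)/2}]$ for a constant $K$ uniform in $\rho \in \mcl B$.

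It then remains to bound the $\mu^{\mathrm{loop}}$-mass of $W$ and of the CLT failure event. Reading off the definition of $\mu^{\mathrm{loop}}$ on $(M,g)$, its length marginal has density $t^{-1} \int_M \mu(z,z;t)\,\Vol(dz)$; by Proposition~\ref{prop-001} this equals $t^{-1}(1 + O(e^{-\alpha t}))$, so
\[
\int_{C - K C^{(1+\ep)/2}}^{C + K C^{(1+\ep)/2}} \frac{1 + O(e^{-\alpha t})}{t}\,dt \;=\; O(C^{(-1+\ep)/2}),
\]
uniformly in $\rho \in \mcl B$. The CLT failure event at length $t$ contributes at most $O(t^{-1} e^{-t^{\ep}})$ per unit $t$, so integrating over the relevant range of $t$ (at least comparable to $C$) produces a contribution that is super-polynomially small in $C$ and is absorbed in the $O(C^{(-1+\ep)/2})$ error. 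Uniformity in $\rho \in \mcl B$ follows directly from the uniform versions of Propositions~\ref{prop-loop-clt} and~\ref{prop-001}. The main technical point to watch is the $B \setminus A$ side for $t \gg C$, where the CLT estimate is very loose; but because the failure probability decays faster than any polynomial in $t$, this tail cannot accumulate enough mass against the logarithmically-divergent $dt/t$ density to compete with $C^{(-1+\ep)/2}$.
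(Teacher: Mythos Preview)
Your argument is correct and follows essentially the same route as the paper: use the CLT (Proposition~\ref{prop-loop-clt}) to confine the good-event part of the symmetric difference to a length window of width $O(C^{(1+\ep)/2})$, bound that window by the $t^{-1}$ density, and control the bad event by the super-polynomial tail.

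One organizational difference worth noting: the paper first invokes Proposition~\ref{prop-metric-ratio} to conclude that \emph{every} loop in the symmetric difference has $t \in [\CC^{-1}C,\CC C]$, before splitting into good and bad events. You instead argue the localization only on the good event and then assert in passing that ``the relevant range of $t$'' is at least comparable to $C$. For $B\setminus A$ this is automatic ($t>C$), but for $A\setminus B$ the lower bound $t \gtrsim C$ is not a consequence of the CLT; it comes from $R>C r_\rho$ together with the pointwise bound $R \le e^{\|\rho\|_\infty} t$ (i.e.\ Proposition~\ref{prop-metric-ratio} or Remark~\ref{rmk-boundedness}). Without this, the bad-event integral $\int e^{-t^\ep}/t\,dt$ over $A\setminus B$ could in principle run down to $t=\delta$ and fail to be $o(1)$. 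Adding that one-line a priori bound closes the argument cleanly and matches the paper's structure.
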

\begin{proof}
(In the proof that follows, the rate of convergence of the $O(\cdot)$ errors are uniform in the choice of $\rho \in \mcl B$.)
Let $\mcl S$ denote the symmetric difference between the two sets.
By Proposition~\ref{prop-metric-ratio}, each loop in $\mcl S$ has length between $\CC^{-1} C$ and $\CC C$, with $\CC$ independent of the choice of $\rho \in \mcl B$.  By Proposition~\ref{prop-loop-clt}, the subset $\mcl S^* \subset \mcl S$ of loops not satisfying~\eqref{eqn-loop-clt} has $\mu^{\text{loop}}$-mass at most $\int_{\CC^{-1} C}^{\CC C} O(e^{-t^{\ep}}/t) dt = O(\exp(-\CC^{-\ep} C^{\ep})/C)$.  Moreover, with $a_{\pm} = C \Vol_{\rho}(M)/\Vol(M) \pm c(C^{(1+\ep)/2}$, we can bound the $\mu^{\text{loop}}$-mass  of the set of loops in  $\mcl S \backslash \mcl S^*$ by 
\[
\int_{a_-}^{a_+} \int_M t^{-1} \mu_\rho(z,z;t) \Vol_{\rho}(dz) dt \leq  \Vol_\rho(M) \int_{a_-}^{a_+} t^{-1}  dt = O(C^{(-1+\ep)/2}).\qedhere
\]
\end{proof}

Next, we analyze the mass of small loops.

\begin{prop}
\label{prop-small-loops}
For each $\rho_1,\rho_2 \in \mcl B$, the difference in the masses of the sets 
\begin{equation}\nonumber
\{L : \len_{\rho_j} L \geq \delta, \len L \leq C\}  \qquad j=1,2
\end{equation}
under the Brownian loop measure in $(M,g)$ is equal to the difference in the expressions 
\begin{equation}
\label{eqn-loop-difference-9}
\frac{\Vol_{\rho_j}(M)}{2\pi\delta} +  \frac{1}{48\pi} \int_M (\| \nabla  \rho_j \|^2 + 2 K \rho_j)\Vol(dz),  \quad j=1,2.
\end{equation}
plus a term that tends to zero as $\delta \to 0$ at a rate that is uniform in the choice of $\rho_1,\rho_2 \in \mcl B$.
\end{prop}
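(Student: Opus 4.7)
The plan is to localize the problem to isothermal coordinate charts and then apply the Euclidean Theorem~\ref{thm::loopconvthm2} chart by chart. Since $M$ is a compact smooth $2$-manifold, we can cover it by finitely many conformal charts $(U_i, \phi_i)$ with $\phi_i:U_i\to V_i\subset\mathbb R^2$ and $(\phi_i^{-1})^* g = e^{\sigma_i}|dz|^2$ for smooth $\sigma_i$.  I would choose the cover fine enough in the $g$-metric that every loop of $g$-length at most $C$ whose center lies in $U_i$ is entirely contained in $U_i$ (possible by the same type of Proposition~\ref{prop-decay-2} tail estimate used earlier, together with compactness of $M$), and then pick a smooth partition of unity $\{\chi_i\}$ subordinate to $\{U_i\}$ and decompose the mass of loops according to $\chi_i(\cen L)$.

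For each chart, I would use the conformal covariance of Brownian motion in $2$D to pull back the Brownian loop measure on $(M,g)$ to the planar Brownian loop measure on $V_i$, up to (a)~a path-dependent time reparametrization sending $g$-length to the Euclidean $\sigma_i$-length and $g$-$\rho_j$-length to the Euclidean $(\rho_j + \sigma_i)$-length, and (b)~a multiplicative curvature correction coming from the short-time heat-kernel expansion $p_t^g(z,z) = \frac{1}{2\pi t}(1 + K(z)t/6 + O(t^2))$ on a Riemannian surface versus the flat $\frac{1}{2\pi t}$ in the plane.  Under this correspondence an appropriate variant of Theorem~\ref{thm::loopconvthm2}, adapted to conformal factors that are not compactly supported (which is fine because $\chi_i$ localizes the integration to loops with center in $V_i$ and the loops in question stay in $U_i$), yields an explicit expression for the $\chi_i$-weighted mass for each $j$.

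Taking the difference between the $j=1$ and $j=2$ contributions and summing over $i$ using $\sum_i\chi_i=1$, all $\rho$-independent terms (depending only on $\sigma_i$, $\chi_i$, and $g$) cancel.  The surviving $\rho$-dependent contributions assemble into covariant quantities on $M$: the Euclidean area difference collects into $\frac{\Vol_{\rho_1}(M)-\Vol_{\rho_2}(M)}{2\pi\delta}$; the pure $\rho$ piece of the Euclidean Dirichlet energy collects into $\frac{1}{48\pi}\int_M(\|\nabla\rho_1\|_g^2-\|\nabla\rho_2\|_g^2)\,dV_g$ by the conformal invariance of the $2$D Dirichlet energy; and the cross terms coupling $\rho_j$ to $\sigma_i$ (from the Euclidean Dirichlet expansion) and to $K$ (from the heat-kernel correction) combine, via integration by parts and the isothermal identity $K\,dV_g = -\tfrac12\Delta_{\mathrm{eucl}}\sigma_i\,dx$ on each chart, into $\frac{1}{48\pi}\int_M 2K(\rho_1-\rho_2)\,dV_g$.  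Uniformity over $\rho_1,\rho_2\in\mcl B$ is inherited from the uniform convergence in Theorem~\ref{thm::loopconvthm2} together with the $\rho$-independence of the atlas and partition of unity.

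The main obstacle is the careful bookkeeping of the heat-kernel curvature correction and the Euclidean Dirichlet cross term: one has to verify that these two contributions (with the appropriate factors of $1/(2\pi)$ distinguishing the Brownian loop measure from the generalized one of Theorem~\ref{thm::loopconvthm2}, and with the heat-kernel coefficient $K/6$ appropriate to the probabilistic convention $\partial_t = \tfrac12\Delta$) combine in exactly the ratio required to produce the $\frac{1}{24\pi}\int K(\rho_1-\rho_2)\,dV_g$ coefficient predicted by the Polyakov--Alvarez formula.  A secondary, comparatively routine, point is verifying that the path-dependent time reparametrization connecting $g$-length to Euclidean length introduces no additional error surviving the $\delta\to 0$ limit, which follows from the fact that loops of $g$-length at most $C$ have small $g$-diameter, so $\sigma_i$ is nearly constant along each loop.
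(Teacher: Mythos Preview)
Your overall strategy---localize to isothermal charts, push the loop measure to the plane via conformal invariance, apply the Euclidean Theorem~\ref{thm::loopconvthm}, and reassemble---is the paper's route. But step~(b) contains a genuine error: the conformal invariance of the two-dimensional Brownian loop measure is \emph{exact}. Once the loops in $V\subset M$ are pushed to $\wt V\subset\BB R^2$, the $g$-$\rho_j$-length becomes the Euclidean $(\wt\rho_j+\sigma_i)$-length and the two loop measures agree with no correction factor (this is precisely~\cite[Lemma~3.3]{ang2020brownian} as quoted in the paper). The short-time expansion of $p_t^g(z,z)$ plays no role; if you insert it you are double-counting the curvature, and your coefficient in front of $\int K(\rho_1-\rho_2)\,dV_g$ will be wrong. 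In the correct argument the $K$ term in~\eqref{eqn-loop-difference-9} arises \emph{solely} from the Dirichlet cross term: Theorem~\ref{thm::loopconvthm} is applied to the compactly supported planar factor $\theta(\wt\rho_j+\sigma)$, and the resulting Euclidean Dirichlet energy $\int\|\nabla(\wt\rho_j+\sigma)\|^2\,dz$ is converted, via the elementary identity of Proposition~\ref{prop-dirichlet}, into $\int(\|\nabla_g\rho_j\|^2+2K\rho_j)\,dV_g$.

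Two smaller points. First, the paper applies the partition of unity to $\rho_1-\rho_2$ (reducing to the case where $\rho_1$ and $\rho_2$ agree outside a single chart $U$), not to the loop measure via $\chi_i(\cen L)$; this sidesteps both the problem of defining the center of a loop on a curved manifold and the need for a $\chi$-weighted variant of Theorem~\ref{thm::loopconvthm} with non-compactly-supported conformal factor. Second, your claim that every Brownian loop of $g$-length at most $C$ with center in $U_i$ stays in $U_i$ is false: Brownian loops of fixed length have unbounded diameter. The paper handles this not by a tail estimate but exactly, via Proposition~\ref{prop-uv}: once $\rho_1=\rho_2$ outside $U$, any loop that exits $V\supset\ol U$ has $\len_{\rho_1}L\ge\delta$ iff $\len_{\rho_2}L\ge\delta$ for all small $\delta$, so such loops contribute zero to the difference and one may restrict to $L\subset V$ for free.
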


To prove Proposition~\ref{prop-small-loops}, we will apply the following pair of propositions.

\begin{prop}
\label{prop-uv}
Let $(M,g)$ be a smooth two-dimensional Riemannian manifold.  Let $U \subset M$, and let $\mcl S$ be a collection of loops in $(M,g)$ that intersects a closed set disjoint from $\ol U$.  Then the following holds for all $\delta>0$ sufficiently small. Let $\rho_1,\rho_2 \in \mcl B$ be uniformly bounded functions that agree outside $U$.  Then, for each $L \in \mcl S$, we have $\len_{\rho_1} L \geq \delta$ iff $\len_{\rho_2} L \geq \delta$.
\end{prop}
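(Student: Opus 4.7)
The plan is to use the uniform $L^\infty$ bound on $\mcl B$ to reduce the iff to loops whose time-length $t := \len(L)$ lies in a narrow intermediate band around $\delta$, and then to rule out that band using the geometric separation between $F$ and $\ol U$. Write $F$ for the closed set disjoint from $\ol U$ that is intersected by every $L \in \mcl S$, set $d_0 := \mathrm{dist}(F,\ol U) > 0$, and let $M_0 := \sup_{\rho \in \mcl B}\|\rho\|_\infty < \infty$ (finite by Remark~\ref{rmk-boundedness}). Then for every $\rho \in \mcl B$ and every loop $L$,
\begin{equation*}
e^{-M_0}\,t \;\leq\; \len_\rho(L) \;\leq\; e^{M_0}\,t.
\end{equation*}
Consequently, $t \geq e^{M_0}\delta$ forces $\len_{\rho_j}(L) \geq \delta$ for both $j$, while $t \leq e^{-2 M_0}\delta$ forces $\len_{\rho_j}(L) < \delta$ for both $j$. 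In both of these extreme regimes the iff is immediate, with no input from $F$ yet needed; notice that loops are allowed to enter $U$ here, and it is precisely the uniform bound on $\mcl B$ that makes $\len_{\rho_1}(L)$ and $\len_{\rho_2}(L)$ fall on the same side of $\delta$.

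The main step is to show that, for all $\delta > 0$ small enough, no $L \in \mcl S$ with $t \in (e^{-2 M_0}\delta,\,e^{M_0}\delta)$ actually enters $U$. Once this is established, $\rho_1 \equiv \rho_2$ along such an $L$, so $\len_{\rho_1}(L) = \len_{\rho_2}(L)$ and the iff becomes tautological. Since $L$ meets $F$ by hypothesis, if $L$ also met $\ol U$ then $L$ would span Riemannian distance at least $d_0$, i.e.\ $\diam(L) \geq d_0$. In the $(x,t,\ell)$-parametrization $L(s) = \sqrt{t}\,\ell(s/t)+x$ of Definition~\ref{defn-loop-space} one has $\diam(L) = \sqrt{t}\,\diam(\ell)$, so this would force $\diam(\ell) \geq d_0/\sqrt{t} \geq d_0\,e^{-M_0/2}/\sqrt{\delta}$, which tends to $\infty$ as $\delta \to 0$. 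In the way $\mcl S$ is used inside the proof of Proposition~\ref{prop-small-loops}---namely after truncating to loops with $\diam(\ell)$ bounded by a fixed constant, the large-$\diam(\ell)$ tail being handled separately via Proposition~\ref{prop-decay-2}---this blow-up is incompatible with $L \in \mcl S$ for all sufficiently small $\delta$. Hence $L \cap \ol U = \emptyset$, and the iff follows.

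The chief obstacle is managing this intermediate band of time-lengths cleanly. The two essential ingredients are exactly (i) the uniform $L^\infty$ bound $M_0$, which pins the troublesome band to width proportional to $\delta$ in the variable $t$, and (ii) the hypothesis $L \cap F \neq \emptyset$, which upgrades time-length smallness into a spatial restriction forbidding $L$ from reaching $\ol U$. Neither ingredient alone suffices: without (i), $\rho$-length can jump across $\delta$ within a band of $t$-values that does not shrink with $\delta$; without (ii), nothing prevents a short loop from sitting entirely inside $U$ and realizing different $\rho_j$-lengths on either side of $\delta$.
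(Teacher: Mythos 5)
Your two extreme regimes (using only the uniform bound $M_0$) are correct, but the whole content of the proposition sits in your middle band, and that is where the argument does not close. The statement quantifies over \emph{every} $L$ in an arbitrary collection $\mcl S$ of loops, each merely required to meet a closed set $F$ disjoint from $\ol U$; nothing in the hypotheses bounds $\diam(\ell)$, i.e.\ the ratio of spatial diameter to $\sqrt{\len(L)}$. So a loop of duration $t\asymp\delta$ that touches $F$ and dips into $U$ is a perfectly admissible element of $\mcl S$, and for such a loop $\len_{\rho_1}$ and $\len_{\rho_2}$ can straddle $\delta$. You exclude these loops by appealing to ``the way $\mcl S$ is used inside the proof of Proposition~\ref{prop-small-loops},'' claiming the loops there are truncated to bounded $\diam(\ell)$ with the tail handled by Proposition~\ref{prop-decay-2}. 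That imports a hypothesis that is in neither the statement nor the application: the proof of Proposition~\ref{prop-small-loops} performs no such truncation (Proposition~\ref{prop-decay-2} concerns the planar measure $d\ell$ and is invoked for Proposition~\ref{prop-integral} in Section~2), and Proposition~\ref{prop-uv} is a deterministic claim about each loop of $\mcl S$, not a claim up to a set of loops of small measure. Hence your key step ``no $L\in\mcl S$ with $t\in(e^{-2M_0}\delta,e^{M_0}\delta)$ enters $U$'' is unproved. (A smaller issue: $L(s)=x+\sqrt{t}\,\ell(s/t)$ and $\diam(L)=\sqrt{t}\,\diam(\ell)$ are the planar parametrization of Definition~\ref{defn-loop-space}; on $(M,g)$ you would have to argue with the Riemannian diameter directly.)

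The paper's proof is a one-liner taking a different route: it reads the hypothesis on $\mcl S$ as providing a uniform lower bound $c_0>0$ on the (time-)length each $L\in\mcl S$ spends outside $U$, and combines this with the uniform boundedness of $\mcl B$. Since $\rho_1\equiv\rho_2$ outside $U$ and $\rho_j\ge -M_0$, this gives $\len_{\rho_1}(L),\len_{\rho_2}(L)\ge e^{-M_0}c_0$ for every $L\in\mcl S$, so once $\delta<e^{-M_0}c_0$ both sides of the ``iff'' are simply true; no case split in $t$ and no diameter considerations are needed. Note that the troublesome configuration you identified (a loop of duration comparable to $\delta$ touching both $F$ and $U$, hence spending little time outside $U$) is exactly what this reading rules out; under your literal reading of the hypotheses it is not ruled out, and the downstream-truncation appeal does not rule it out either. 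The clean repair of your write-up is to make the lower bound on the time spent outside $U$ explicit (as an assumption on $\mcl S$, or as a property verified in the application) and then apply it to all loops at once, at which point your band decomposition becomes unnecessary.
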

\begin{proof}
    It is straightforward as $\mcl B$ is uniformly bounded and the length of $L$ outside $U$ is bounded below.
\end{proof}

\begin{prop}
\label{prop-dirichlet}
Let $f$ be a compactly supported function on a region $D \subset \BB{R}^2$ with finite Dirichlet energy, and let $h$ be a smooth compactly supported function on $D$, and let $\nabla_h,\nu_h,K_h$ denote the gradient, volume form and Gaussian curvature associated to $(D,e^h |dz|^2)$.  Then 
\[
\int_{D} \|\nabla(f+h)\|^2 dz - \int_D \|\nabla h\|^2 dz = \int_{D} (\|\nabla_h f \|^2 + 2 K_h f) \nu_h(dz).
\]
\end{prop}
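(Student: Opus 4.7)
The plan is to reduce both sides to integrals against the flat Euclidean volume form and match them via a single integration by parts. On the left, bilinearity of the Dirichlet form gives
\[
\int_D \|\nabla(f+h)\|^2 \, dz - \int_D \|\nabla h\|^2 \, dz = \int_D \|\nabla f\|^2 \, dz + 2\int_D \nabla f\cdot \nabla h \, dz,
\]
so it suffices to expand the right-hand side of the proposition into this same form.

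For the right-hand side, I would substitute the standard formulas for two-dimensional conformal geometry applied to $g = e^h |dz|^2$: the volume form is $\nu_h = e^h\, dz$, the squared gradient norm is $\|\nabla_h f\|_h^2 = e^{-h}\|\nabla f\|^2$, and the Gaussian curvature $K_h$ is a constant multiple of $e^{-h}\Delta h$ (in the normalization compatible with the Polyakov--Alvarez term appearing in Theorem~\ref{thm-loop-general-surface}). These are pointwise identities that are well-defined for $f \in H^1(D)$ because $h$ is smooth. The factors of $e^{\pm h}$ cancel so that $\|\nabla_h f\|_h^2 \, d\nu_h = \|\nabla f\|^2 \, dz$, while $2K_h f \, d\nu_h$ collapses to a multiple of $f\,\Delta h \, dz$. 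An integration by parts---legitimate because $h \in C^\infty_c(D)$ and $f$ is a compactly supported $H^1$ function on $D$, so that no boundary contribution appears---converts the $f\,\Delta h$ integral into the corresponding multiple of $\nabla f \cdot \nabla h \, dz$, matching the cross term on the left.

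The only real subtlety is that $f$ has only finite Dirichlet energy and need not be $C^2$, so the identity $\int_D f\, \Delta h \, dz = -\int_D \nabla f \cdot \nabla h \, dz$ must be interpreted via the weak pairing of the Sobolev derivative of $f$ against $\nabla h$. Because $h$ is smooth and compactly supported in $D$, this follows from a standard density/approximation argument and poses no serious obstacle. Aside from this routine regularity check, the proof is essentially an algebraic manipulation of conformal identities combined with one instance of integration by parts.
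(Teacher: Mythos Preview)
Your proposal is correct and follows essentially the same route as the paper: both arguments reduce to the conformal identities $\nu_h = e^h\,dz$, $\|\nabla_h f\|^2\,d\nu_h = \|\nabla f\|^2\,dz$, $K_h \propto e^{-h}\Delta h$, together with a single integration by parts on the cross term. If anything, your presentation is slightly cleaner: the paper passes through the formal expression $-\int_D f\Delta f\,dz$ (which is not literally defined for $f$ merely in $H^1$) and then undoes this step at the end, whereas you stay on the gradient side throughout and only integrate by parts against the smooth function $h$, which is exactly where the manoeuvre is rigorously justified.
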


\begin{proof}
It follows from
\begin{align*}
&\int_{D} (\|\nabla(f+h)\|^2 - \|\nabla h\|^2 ) dz 
=
-\int_{D} (f \Delta f + 2 f \Delta h) dz
\\=
&-\int_{D} (f e^{-h} \Delta f + 2 f e^{-h} \Delta h) \nu_h(dz)
=
-\int_{D} (f \Delta_h f - 2 K_h f) \nu_h(dz)
\\=
&-\int_{D} (f e^{-h} \Delta f - 2 K_h f) \nu_h(dz)
=
\int_{D} (\|\nabla_h f \|^2 + 2 K_h f) \nu_h(dz).\qedhere
\end{align*}
\end{proof}

\begin{proof}[{Proof of Proposition~\ref{prop-small-loops}}]
In the proof that follows, we consider $\rho$-lengths and $\rho$-volume forms (as defined in Definition~\ref{def-rho-length-vol}) for functions $\rho$ on both $(M,g)$ and a region of the Euclidean plane.  To avoid confusion between the two settings, we use the notation $\len_\rho$ and $\Vol_\rho$ in the $(M,g)$ setting, and $\wt \len_\rho$ and $\wt \Vol_\rho$ in the Euclidean setting.

Let $U,V,W$ be open sets in $M$ with $\ol U \subset V$ and $\ol V \subset W$, such that we can find a homeomorphism $\varphi: W \rightarrow \wt W \subset \BB{R}^2$. 
By a partition of unity argument, it suffices to prove the proposition under the assumption that $\rho_1,\rho_2$ agree on $U$. So, we assume that this is the case.
By Proposition~\ref{prop-uv},  the difference in the masses of the sets 
\begin{equation}
\label{eqn-loop-difference-1}
\{L : \len_{\rho_j} L \geq \delta, \len L \leq C\}  \qquad j=1,2
\end{equation}
under the Brownian loop measure in $(M,g)$ is equal to the difference in masses with~\eqref{eqn-loop-difference-1} replaced by
\begin{equation}
\label{eqn-loop-difference-2}
 \{L : \len_{\rho_j} L \geq \delta, \len L \leq C, L \subset V\} \qquad j=1,2.
\end{equation}
Since, by Proposition~\ref{prop-metric-ratio}, $\len L > C$ automatically implies $\len_{\rho_j} L \geq \delta$ for sufficiently small $\delta$ (in a manner that does not depend on the choice of $\rho_1,\rho_2 \in \mcl B$), we can replace~\eqref{eqn-loop-difference-2} by the sets
\begin{equation}
\label{eqn-loop-difference-3}
 \{L : \len_{\rho_j} L \geq \delta, L \subset V\},  \qquad j=1,2,
\end{equation}
with the condition $\len L \leq C$ in~\eqref{eqn-loop-difference-2} removed. 
Let $\wt \rho_j$ be the pushforward of $\rho_j$ via $\varphi$, and let $e^\sigma |dz|^2$ be the pushforward of the metric $g$ via $\varphi$.  
Also, set $\wt V = \varphi(V)$. By conformal invariance of the Brownian loop measure~\cite[Lemma~3.3]{ang2020brownian}, the masses of the sets~\eqref{eqn-loop-difference-3} under the Brownian loop measure in $(M,g)$ equals the masses of the sets
\begin{equation}
\label{eqn-loop-difference-4}
\{L : \wt \len_{\wt \rho_j+\sigma} L \geq \delta, L \subset \wt V\}  \qquad j=1,2
\end{equation}
under the Brownian loop measure in the Euclidean plane.
Now, let $\theta$ be a smooth function on $\BB{R}^2$ that equals $1$ on $\ol{\wt V}$ and zero outside a compact subset of $\wt W$. Since $\theta \equiv 1$ on $\wt V$,~\eqref{eqn-loop-difference-4} is equal to the set 
\begin{equation}
\label{eqn-loop-difference-5}
\{L : \wt \len_{\theta(\wt \rho_j+\sigma)} L \geq \delta, L \subset \wt V\}  \qquad j=1,2
\end{equation}
 By Proposition~\ref{prop-uv}, the difference in these loop masses is unchanged if we replace the sets~\eqref{eqn-loop-difference-5} by the sets
\[
\{L : \wt \len_{\theta(\wt\rho_j+\sigma)} L \geq \delta, \cen(L) \in \wt W\}, \qquad j=1,2.
\]
By Theorem~\ref{thm::loopconvthm}, this difference in loop masses is given by the difference in the quantities
\begin{equation}
\label{eqn-loop-difference-6}
\frac{\wt \Vol_{\theta(\wt\rho_j+\sigma)}(\wt W)}{2\pi\delta} +  \frac{1}{48\pi}  \frac{1}{48\pi} \int_{\wt W} \| \nabla (\theta(\rho_j+\sigma)) \|^2 dz, \qquad j=1,2.
\end{equation}
By Proposition~\ref{prop-dirichlet} with $f=\theta \wt \rho_j$ and $h=\sigma$, together with the fact that $\wt \rho_1 \equiv \wt \rho_2$ outside $U$ and $\theta \equiv 1$ in $V$, we can write the difference in the quantities~\eqref{eqn-loop-difference-6} as the difference in the quantities
\begin{equation}
\label{eqn-loop-difference-7}
\frac{\wt \Vol_{\wt\rho_j+\sigma}(\wt W)}{2\pi\delta} +  \frac{1}{48\pi} \int_{\wt W} (\| \nabla_\sigma \wt \rho_j \|^2 + 2 K_\sigma \wt \rho_j)\wt \Vol_{\sigma}(dz),  \quad j=1,2,
\end{equation}
where $\nabla_\sigma, K_\sigma$ are the gradient and Gauss curvature associated to $(\wt W,e^\sigma |dz|^2)$.
Pulling back via $\varphi$, we can rewrite the expressions~\eqref{eqn-loop-difference-7} as
\begin{equation}
\nonumber
\label{eqn-loop-difference-8}
\frac{\Vol_{\rho_j}(W)}{2\pi\delta} +  \frac{1}{48\pi} \int_W (\| \nabla \rho_j \|^2 + 2 K \rho_j)\Vol(dz),  \quad j=1,2.
\end{equation}
We complete the proof by noting that, since $\rho_1 \equiv \rho_2$ outside $W$, we can replace $W$ by $M$ in~\eqref{eqn-loop-difference-9}.
\end{proof}

\begin{proof}[{Proof of Theorem~\ref{thm-loop-general-surface}}]
By conformal invariance of the Brownian loop measure~\cite[Lemma~3.3]{ang2020brownian}, the statement of the theorem is equivalent to the assertion that~\eqref{eqn-p-a} holds up to scaling.
The result holds for $\rho \equiv 0$ by~\cite[Theorem 1.3]{ang2020brownian}.\footnote{We remark that the $\delta$ in the statement of~\cite[Theorem 1.3]{ang2020brownian} represents the quadratic variation of Brownian loops, which is two times the time interval length we use in this paper. Thus, for our application, we need to substitute $\delta$ there into $\delta/2$.} We deduce the result for general $\rho \in \mcl B$ by applying Propositions~\ref{prop-large-loops} and~\ref{prop-small-loops}.
\end{proof}

\section{Constructing square subdivision regularizations}
Now what happens if $\rho$ is defined from a finite square subdivision as in~\cite[Section 6]{ang2020brownian}, so that it has constant Laplacians on each square in a grid?  These functions can be shown to be $C^1$, but along the edges of the squares they fail to be $C^2$. In this section, we explain enough about these function to make it clear that they fit into the framework of this paper, at least if one restricts attentions to those for which the square averages are restricted to a compact set.

One can construct functions with piecewise-constant Laplacian on squares somewhat explicitly. Consider the function on $f(z) = \Re[z^2 \log(z^2)]/\pi + |z|^2/2$ restricted to the quadrant  $Q = \{z: \arg(z) \in (-\pi/4,\pi/4) \}$. Note that on the boundary of the quadrant, $z^2$ is purely imaginary, equal to $|z^2|i$ on the upper boundary ray and $-|z^2|i$ on the lower, while $\Im \log(z^2) = \arg(z^2)$ is $\pi /2$ on the upper boundary and $- \pi /2$ on the lower. so that $\Re[z^2\log(z^2)]/\pi = -|z^2|/2$ on both rays. Thus $f$ has constant Laplacian and equals zero on the boundary $Q$. We can extend the definition of $f$ to the other three quadrants ($iQ$, $-iQ$, and $-Q$) by imposing the relation $f(iz) = -f(z)$. In a small neighborhood of the origin, the $f$ defined this way is negative on $\pm Q$ and positive on $\pm iQ$. The complex derivatives of $g(z) = z^2 \log(z^2) = 2 z^2 \log(z)$ are first $g'(z) = 4z \log(z) + 2z$, second $g''(z) = 4 \log(z) + 6$, third $g'''(z) = 4/z$, etc.  One can deduce from this that $f$ is differentiable as a real-valued function (with derivative $0$ at $0$) but that its second derivatives blow up slowly (logarithmically) near zero and also have discontinuities along the quadrant boundaries.

The function $f(\omega z)$ (where $\omega$ is a fixed eighth root of unity) is thus a $C^1$ function that has piecewise constant Laplacian on the four standard quadrants, while being equal to zero on the boundaries of these quadrants.

By taking linear combinations of $f(\omega z)$ and the four functions $\bigl[\max\bigl( \Re(a z), 0\bigr)\bigr]^2$ (with $a \in \{\pm 1, \pm i\}$) we can get a differentiable function whose Laplacian matches any function that is constant on each of the four standard quadrants --- in particular a function whose Laplacian is $1$ on one quadrant and $0$ on the other three.  By taking differences of translates of this function, we can obtain a function whose Laplacian is $1$ on a semi-strip or a rectangle (and $0$ elsewhere).  Linear combinations of these allow us to describe a function $\phi$ whose Laplacian is any given function that is constant on the squares of the grid.  Any other function with the same Laplacian has to then differ from $\phi$ by a harmonic function. For example, by subtracting the harmonic extension of the values of $\phi$ on $\partial D$ one obtains a function with the desired Laplacian whose boundary values on $\partial D$ are zero.

\section{Open questions} \label{sec::open}

In this section, we discuss some open questions. First, as already mentioned in Footnote~\ref{footnote:w12}, we expect that some of our main results can be extended to $\rho\in W^{1,2}(D)$. In addition to the fact that the Dirichlet energy is well-defined for such functions, here is another reason that the arguments from the Lipschitz case might carry over to this setting.
Suppose that $-M\le \rho\le M$. Then the function
    \begin{equation*}
        f(x):= \begin{cases}
        e^x-1 & \text{ for } x \in [-M,M]\\
        0 & \text{ otherwise.}
        \end{cases}
    \end{equation*}
    is a Lipschitz function on $[-M,M]$ with $f(0)=0$. Therefore, the celebrated Stampacchia's theorem~\cite[\S 4.2.2]{evans2018measure} asserts that $\rho\in W^{1,2}(D)$ implies $e^{\rho}-1$ is $W^{1,2}(D)$ and the chain rule of weak derivatives implies that
    \begin{equation*}
        \nabla(e^{\rho(x)}) = e^{\rho(x)} \nabla\rho(x)
    \end{equation*}
    holds for almost every $x$.
This already implies that many parts of our proofs carry over to the general case. As an example, Lemma~\ref{lem::convolveestimateconverges} follows almost immediately with the condition of precompactness in $W^{1,2}(D)$, or equivalently the uniform equicontinuity in $W^{1,2}(D)$, in the sense of Remark~\ref{rmk-uniform-equicontinuity}. However, there are a few technical issues where we need to improve our estimates. For example, our proof of Lemma~\ref{lem::convolveestimate} is based on a pointwise estimate of $\nabla \rho$, which would have to be replaced by an $L^2$ estimate, and such an estimate does not immediately follow from our current arguments.

\begin{ques}\label{ques-general-ftn} Prove that Theorem~\ref{thm::loopconvthm} holds for a larger class of functions. For example, one might consider functions in $W^{1,2}(D)$ and take $\mathcal B$ to be any precompact subset of $W^{1,2}(D)$, possibly with some additional conditions.
\end{ques}

Next, we note that Lemma~\ref{lem::convolveestimateconverges} is proven for Schwartz functions $\theta$, which is not expected to be optimal in any sense other than making the proofs simple. In fact, we believe the result holds for a much more general class of functions. For instance, we apply the lemma to \emph{expected} occupation measure, but it should also be true for a single occupation measure in some sense.

\begin{ques}\label{ques-occ} Prove that Lemma~\ref{lem::convolveestimateconverges} holds for a more general class of functions. For example, prove a similar result for a random function describing the occupation measure of a Brownian loop.
\end{ques}

Third, we may consider the case when the manifold $M$ has a boundary, say when $M=\BB H$.
In principle, one could formulate the boundary problem in a style similar to that presented above, and try to weaken the required regularity along the boundary as well.  To start, imagine the boundary line is the horizontal real axis.  Let $d \ell_b$ be the measure on unit length loops that hit the real axis and are centered at a point on the positive imaginary axis.

This measure can be obtained from by starting with $d\ell$, then {\em weighting} by the gap between the minimum and maximum imaginary values obtained by the loop, then shifting the vertical height loop to a uniformly random height (within this range).  The expected maximal height of a Brownian bridge is $\sqrt{\pi/8}$ (as can be proved using the reflection principle). The expected minimal height is thus minus that, and the expected difference $2\sqrt{\pi/8}$, which means that the expected vertical gap between the maximum height and the central height is again $\sqrt{\pi/8}$.

So formally, instead of being a probability measure, $d \ell_b$ is a measure with weight $\sqrt{\pi/8}$.  For real values $x$, write $\bclen(x, t, \ell_b) = e^{\rho(x)} \delta$. Note that the set of loops of length greater than $\delta$ is given by \begin{equation}\int_\delta^\infty \sqrt{\pi/8} \sqrt{t} \frac{1}{2\pi t^2}dt = \frac{2\sqrt{\pi/8} }{ 2\pi \sqrt{\delta} } = \frac{1}{2\sqrt{2\pi\delta}}\nonumber\end{equation}

We can imagine $\rho$ is defined in a neighborhood of a real line segment, and then try to measure the mass of loops (of size greater than $\delta$) that hit the boundary line itself.  The relevant quantities are the gradient in the parallel direction and the gradient in the normal direction (with the latter affecting whether the real axis is hit by more loops centered above or below).

\begin{ques}\label{ques-boundary} Prove that Theorem~\ref{thm-loop-general-surface} also holds for manifolds with boundaries. In other words, generalize the boundary case of~\cite[Theorem~1.3]{ang2020brownian}:\footnote{Similar to the case without boundary, we plug $\delta/2$ instead of $\delta$ in the statement of~\cite[Theorem~1.3]{ang2020brownian}, where $\delta$ there represents the quadratic variation of Brownian loops, which is two times the time interval length we use in this paper.}
\begin{conj}
    Let $(M,g)$ be a fixed compact smooth two-dimensional Riemannian manifold with smooth boundary, and we let $\mu^{\text{loop}}$ denote the Brownian loop measure on $(M,g)$. Let $K$ be the Gaussian curvature on $M$, let $\Delta$ be the Laplacian associated to $(M,g)$, and let $\det_\zeta' \Delta$ denote its zeta-regularized determinant. Let $\mcl B$ be a family of Lipschitz functions that (1) has uniformly bounded Lipshitz constants, and (2) is precompact in $W^{1,1}(M)$.
    
The $\mu^{\text{loop}}$-mass of loops with $\rho$-length greater than $\delta$ is given by
\begin{align}
\nonumber
&\frac{\Vol_{\rho}(M)}{2\pi\delta} - \frac{\Len_{\rho}(\partial M)}{2\sqrt{2\pi \delta}}
- \aconst (\log \frac{\delta}{2} + \upgamma) +  \frac{1}{48\pi} \int_M (\| \nabla \rho \|^2 + 2 K \rho)\,\Vol(dz)   
   \\ 
\nonumber
& \qquad\qquad \qquad
+ \log \Vol(M) - \log \Vol_\rho(M) - \log \det\nolimits_{\zeta}' \Delta
+ O(\delta^{1/2}), \label{eqn-p-a-boundary}
\end{align}
with the convergence as $\delta\to 0$ uniform over $\rho\in \mathcal B$, where $\upgamma \approx 0.5772$ is the Euler-Mascheroni constant.
\end{conj}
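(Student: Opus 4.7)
The plan is to follow the proof strategy of Theorem~\ref{thm-loop-general-surface}, taking the boundary case of~\cite[Theorem~1.3]{ang2020brownian} (the $\rho \equiv 0$ version of the conjecture) as the base case, and showing that for $\rho \in \mcl B$ the difference in $\mu^{\text{loop}}$-mass of $\{\len_\rho(L) \geq \delta\}$ matches the difference of the asserted expressions. As in Section~\ref{sec-loop-general-surface}, I would split loops into ``large'' ones (with length $\geq C$) and ``small'' ones (length $< C$), treat them separately, and then send $C \to \infty$.

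For large loops, Propositions~\ref{prop-loop-clt} and~\ref{prop-large-loops} should extend to the boundary setting with only cosmetic changes. The eigenfunction expansion of the Brownian transition kernel on $(M,g)$ with Dirichlet conditions still yields exponential mixing (the subleading eigenvalue gap is positive), so the Markov-chain CLT argument comparing $\int_0^t e^{\rho(L(s))}ds$ to $t\,\Vol_\rho(M)/\Vol(M)$ goes through verbatim. The contribution of large loops to the difference of formulas therefore reduces to the $\Vol_\rho(M)/(2\pi\delta)$ term truncated at $C$, up to $O(C^{(-1+\ep)/2})$ error, exactly as before.

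For small loops, I would apply a partition of unity to split them further into loops supported in a fixed neighborhood disjoint from $\partial M$ and loops supported in a collar neighborhood of $\partial M$. The first type is handled exactly as in Proposition~\ref{prop-small-loops}: use conformal invariance of the Brownian loop measure, pull back to a Euclidean domain, and apply Theorem~\ref{thm::loopconvthm} combined with Proposition~\ref{prop-dirichlet}, producing the $\Vol_\rho(M)/(2\pi\delta)$ and $\frac{1}{48\pi}\int_M(\|\nabla\rho\|^2+2K\rho)\Vol(dz)$ contributions. The second type, loops in the collar, requires a boundary analogue of Theorem~\ref{thm::loopconvthm2}: after conformally straightening $\partial M$ locally to a horizontal line, one works with the signed measure $d\ell_b$ on loops hitting the boundary (as sketched in the discussion preceding the conjecture), and establishes that, at leading order $\delta^{-1/2}$, the $\rho$-dependence of the boundary-loop mass is captured entirely by the $\rho$-boundary length $\Len_\rho(\partial M) = \int_{\partial M} e^{\rho/2}\,ds$. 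The tangential-gradient contribution is a lower-order boundary Dirichlet-type term which is absorbed into the $O(\delta^{1/2})$ error; the normal-gradient contribution redistributes loops between the two sides of the boundary but does not affect the leading asymptotic.

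The main obstacle is making this boundary-loop analysis rigorous. To adapt the framework of Sections~\ref{sec::firstlemma}--\ref{sec::secondlemma}, one must (i) define a suitable center-$\rho$-length for boundary-hitting loops (evaluating $\rho$ at the projected boundary crossing is a natural choice) and prove the existence of a unique threshold $\alpha$ analogous to Proposition~\ref{prop-alpha}, now depending also on the signed distance of the center from $\partial M$; (ii) verify a Schwartz-type decay property for the $d\ell_b$ expected occupation measure, which should follow from the reflection principle for Brownian bridges together with the bound $\mcl L(\diam(\ell)>K) = O(K^{-N})$ of Proposition~\ref{prop-decay-2}; and (iii) redo the Fourier analysis of Section~\ref{sec::secondlemma} in the half-plane setting, where the Green's function acquires an image term, causing the bulk Dirichlet energy integral to degenerate at leading order into a one-dimensional boundary-length integral. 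Finally, producing the stated $O(\delta^{1/2})$ error (rather than $o(1)$) requires a careful quantitative refinement of Propositions~\ref{prop-alpha-curve} and~\ref{prop-alpha-inverse}: since the boundary term is of order $\delta^{-1/2}$, the accumulated integration error must be controlled to order $\delta^{1/2}$, which I expect to be the most delicate book-keeping step.
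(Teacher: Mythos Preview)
The statement you are attempting to prove is not proved in the paper. It is explicitly labeled a \emph{conjecture} and appears inside Question~\ref{ques-boundary} in Section~\ref{sec::open} (``Open questions''). The paper offers only a heuristic discussion preceding it: the definition of the boundary loop measure $d\ell_b$, the computation of its total mass $\sqrt{\pi/8}$, and the $\delta^{-1/2}$ scaling of the leading boundary term. There is no proof in the paper to compare your proposal against.

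That said, your plan is a sensible extrapolation of the paper's methods and is consistent with the heuristics the authors sketch. You correctly identify that the $\rho\equiv 0$ base case is the boundary version of~\cite[Theorem~1.3]{ang2020brownian}, that large loops should be handled by a Dirichlet-eigenvalue analogue of Propositions~\ref{prop-loop-clt} and~\ref{prop-large-loops}, and that the interior small loops reduce to Theorem~\ref{thm::loopconvthm} via Proposition~\ref{prop-small-loops}. You also correctly isolate the genuine obstacle: a rigorous boundary analogue of Theorem~\ref{thm::loopconvthm2} for the measure $d\ell_b$, including the threshold $\alpha$, the Schwartz-type decay, and the half-plane Fourier analysis. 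These are precisely the ingredients the paper does not supply, and your proposal does not supply them either; steps (i)--(iii) are stated as tasks, not carried out. In particular, the claim that the tangential and normal gradient contributions are absorbed into $O(\delta^{1/2})$ is plausible but unproved, and obtaining $O(\delta^{1/2})$ rather than $o(1)$ would require sharper estimates than any established in the paper. So your document is a reasonable research outline, not a proof, and it does not go beyond what the paper already indicates heuristically.
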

\end{ques}

Finally, one naive approach to generalizing~\cite[Proposition~6.9]{ang2020brownian} for lower-regularity settings is to follow the zeta-regularization procedure verbatim, hoping each step works for the generalized settings in a similar way.
The first obstacle in this direction is the lack of short-time expansions for the trace of heat kernels. For our application, what we need is if $(M,g)$ is a two-dimensional \emph{non-smooth} manifold without boundary, and $\Delta$ is the associated Laplacian defined in terms of Brownian loop mass, then
\[
\tr(e^{-\delta\Delta/2}) = \Vol_g(M)/\delta + \chi(M)/6 + o(1).
\]
\begin{ques}\label{ques-trace} Prove the short time expansion for the (trace) heat kernel holds for lower regularity metrics.
\end{ques}

\bibliographystyle{hmralphaabbrv}
\bibliography{cibib,cibib-alt,bibmore}
\end{document}